\patchcmd{\subsubsection}{\itshape}{\bfseries\itshape}{}{}
\newtheorem{theorem}{Theorem}[section]
\newtheorem{lemma}[theorem]{Lemma}
\newtheorem{proposition}[theorem]{Proposition}
\newtheorem{corollary}[theorem]{Corollary}
\newtheorem*{theoremA}{Theorem A}
\newtheorem*{theoremB}{Theorem B}
\newtheorem*{theoremC}{Theorem C}
\theoremstyle{remark}
\newtheorem{remark}[theorem]{Remark}
\newcommand{\C}{\ensuremath{\mathbb{C}}}
\newcommand{\R}{\ensuremath{\mathbb{R}}}
\renewcommand{\H}{\ensuremath{\mathbb{H}}}
\renewcommand{\O}{\ensuremath{\mathbb{O}}}
\newcommand{\g}[1]{\ensuremath{\mathfrak{#1}}}
\DeclareMathOperator{\tr}{tr}
\DeclareMathOperator{\Id}{Id}
\DeclareMathOperator{\Ad}{Ad}
\DeclareMathOperator{\Image}{Im}
\DeclareMathOperator{\Ker}{ker}
\DeclareMathOperator{\spann}{span}
\newcommand{\Sp}{\ensuremath{\mathsf{Sp}}}
\newcommand{\SO}{\ensuremath{\mathsf{SO}}}
\begin{document}
	\title[Homogeneous and inhomogeneous isoparametric hypersurfaces]{Homogeneous and inhomogeneous isoparametric hypersurfaces in rank one symmetric spaces}
	\author[J.~C. D\'\i{}az-Ramos]{Jos\'{e} Carlos D\'\i{}az-Ramos}
	\address{Department of Mathematics,
	University of Santiago de Compostela, Spain.}
	\email{josecarlos.diaz@usc.es}
	
	\author[M. Dom\'{\i}nguez-V\'{a}zquez]{Miguel Dom\'{\i}nguez-V\'{a}zquez}
	\address{Department of Mathematics,
	University of Santiago de Compostela, Spain.}
	\email{miguel.dominguez@usc.es}
	
	\author[A. Rodr\'{\i}guez-V\'{a}zquez]{Alberto Rodr\'{\i}guez-V\'{a}zquez}
	\address{Department of Mathematics,
		University of Santiago de Compostela, Spain.}
	\email{a.rodriguez@usc.es}
	\thanks{The authors have been supported by the projects MTM2016-75897-P (AEI/FEDER, Spain) and ED431C 2019/10, ED431F 2017/03 (Xunta de Galicia, Spain). The second and third authors  acknowledge  support  of  the  Ram\'on y Cajal program (Agencia Estatal de Investigaci\'on, Spain) and the FPU program (Ministry of Education, Spain), respectively}
	
	\subjclass[2010]{Primary 53C35, Secondary 57S20, 53C40}
	
	
	\begin{abstract}
		We conclude the classification of cohomogeneity one actions on symmetric spaces of rank one by classifying cohomogeneity one actions on quaternionic hyperbolic spaces up to orbit equivalence. As a by-product of our proof, we produce uncountably many examples of inhomogeneous isoparametric families of hypersurfaces with constant principal curvatures in quaternionic hyperbolic spaces.
	\end{abstract}
	
	\keywords{Isoparametric hypersurface, cohomogeneity one action, homogeneous submanifold, constant principal curvatures, symmetric space, quaternionic hyperbolic space, K\"{a}hler angle}
	
	\maketitle

\section{Introduction}

Riemannian geometry, in a very broad sense, can be understood as the study of those properties of a smooth manifold that are invariant under isometries.
Among Riemannian manifolds with large isometry groups, Riemannian symmetric spaces stand out as a class of their own, 
not only in Riemannian Geometry, but also in Lie group theory or Global Analysis.
In this class, Euclidean spaces and symmetric spaces of rank one are the most popular in Riemannian geometry.
It has been an interesting problem to study isometric actions on manifolds with large isometry groups, and several types of them have been investigated over the years. 
One of the most important families of isometric actions is that of cohomogeneity one, that is, proper isometric actions whose orbit space is one-dimensional, or in other words, whose
principal orbits are hypersurfaces. 
Cohomogeneity one actions have recently been of great interest for the construction of geometric structures, such as Einstein metrics, Ricci solitons, special holonomy, or minimal hypersurfaces, among~others.

However, it is also a natural and important problem to find all cohomogeneity one actions on a given Riemannian manifold, usually just up to orbit equivalence. This is a classical problem in submanifold geometry that traces back to the time of \'{E}.~Cartan, and which turns out to be equivalent to the classification of homogeneous hypersurfaces up to isometric congruence. By both historical and mathematical reasons, it has frequently been linked to the investigation of the so-called isoparametric hypersurfaces.

A hypersurface is called isoparametric if its nearby equidistant hypersurfaces have constant mean curvature. 
Thus, every homogeneous hypersurface is isoparametric.
In the 30s, Cartan himself studied the converse implication. 
Since all the examples known to him (which included all isoparametric hypersurfaces in Euclidean and real hyperbolic spaces, and all isoparametric hypersurfaces with up to three principal curvatures in spheres) were homogeneous, he posed the question: is it true that an isoparametric hypersurface is homogeneous? 
A surprising negative answer to this question arrived almost forty years later, with the construction of the first examples of inhomogeneous isoparametric hypersurfaces in spheres by Ozeki and Takeuchi~\cite{OzTa75}, soon generalized by Ferus, Karcher and M\"{u}nzner~\cite{FKM81}. 
These examples led to an added difficulty in the classification problem of isoparametric hypersurfaces in spheres, which has given rise to outstanding results over the last few years \cite{CCJ07,I08,Mi,Si16,Ch13,Ch18}. 
Other inhomogeneous isoparametric hypersurfaces have been found in symmetric spaces such as complex and quaternionic projective spaces \cite{Do16,DoGo18} and in certain symmetric spaces of non-compact type, such as complex hyperbolic spaces~\cite{DiDoSa17}, or more generally, the symmetric spaces with Dynkin diagram of ($BC_r$)-type \cite{damekricci,Do15}.
However, none of these examples, unlike the ones in spheres, have constant principal curvatures, with only one remarkable exception: one inhomogeneous family of isoparametric  hypersurfaces with constant principal curvatures in the Cayley hyperbolic plane~\cite{damekricci}.

The classification of cohomogeneity one actions up to orbit equivalence in Euclidean spaces follows from the classification of isoparametric hypersurfaces in $\R^n$ obtained by Segre~\cite{Se38}.
In symmetric spaces of compact type and rank one, the corresponding classification follows from several works.  In spheres it was obtained by Hsiang and Lawson~\cite{HsLa71}, in complex projective spaces by Takagi~\cite{Ta73}, and in quaternionic projective spaces and the Cayley plane by Iwata~\cite{Iw78,Iw81}.
There is also a classification of cohomogeneity one actions on irreducible symmetric spaces of compact type due to Kollross~\cite{Ko02}.

The problem is more difficult in the non-compact case.
The main reason is that, unlike in the compact setting, there are two main types (namely, reductive and parabolic) of maximal subgroups of the isometry group of a symmetric space of non-compact type, and parabolic subgroups contain many subgroups that act transitively on the space. Thus, the investigation of orbits of subgroups of a parabolic subgroup frequently leads to complicated linear algebra or combinatorial problems (in certain sense similar, for example, to the ones arising in the outstanding classification problem of totally geodesic submanifolds~\cite{klein}), for which very few ideas have been developed~(cf.~\cite{BT03, BT13, mathz}).
The first classification result of cohomogeneity one actions on a symmetric space of non-compact type was given by Cartan~\cite{Ca38} for real hyperbolic spaces, while he was studying isoparametric hypersurfaces in spaces of constant curvature. However, the classification in complex hyperbolic~spaces and the Cayley hyperbolic plane, due to Berndt and Tamaru~\cite{BT07}, only arrived seventy years later.
There are several structural results for symmetric spaces of non-compact type~\cite{BeDo15, BT13}, but a full classification is still not available, not even in quaternionic hyperbolic spaces. 

This is precisely the point where we start our study.
The main aim of this article is to classify cohomogeneity one actions on quaternionic hyperbolic spaces up to orbit equivalence.
Our method relies partially on the ideas developed in~\cite{BT07}, where it is proved that this classification can be reduced to a certain problem that we solve in this~paper.
\medskip

The first main result of this article can be stated in terms of quaternionic algebra.
We denote by $\H$ the real division algebra of the quaternions, endowed with its standard complex structures $i$, $j$ and $k$.
Let $\H^n$ be a right quaternionic vector space of dimension~$n$. The compact symplectic group $\Sp(n)$ is the group of quaternionic matrices (acting on the left on~$\H^n$) that preserve the standard quaternionic bilinear form $\sum_{i=1}^n \bar{v}_i w_i$, where $v,w\in\H^n$, and bar denotes conjugation.
This bilinear form naturally induces an inner product in $\H^n$ that makes it isometric with $\R^{4n}$. By $\g{J}$ we will denote the quaternionic structure of $\H^n$, that is, the subspace of real endomorphisms of $\H^n$ generated by the right multiplications by $i,j$ and $k$, which can therefore be seen as the Lie algebra of~$\Sp(1)$.

We also consider the Lie group $\Sp(1)\Sp(n)=\Sp(1)\times\Sp(n)/\mathbb{Z}_2$, which acts on $\H^n$ as $(q,A)\cdot v=Avq^{-1}$. This is an important group in Differential Geometry, as it arises in Berger's holonomy list, that is, the list of Lie groups which can be realized as the holonomy of irreducible, simply connected and non-locally symmetric Riemannian manifolds. Thus, a Riemannian manifold is called quaternionic K\"{a}hler if it has dimension $4n$, is not Ricci-flat, and its holonomy is isomorphic to a subgroup of $\Sp(1)\Sp(n)$, $n\geq 2$. The simplest examples of symmetric, quaternionic K\"ahler spaces are the quaternionic projective spaces, and their non-compact duals, the quaternionic hyperbolic spaces. In any case, understanding algebraic properties linked to holonomy groups is a first fundamental step towards the study of more geometric questions, such as those related to curvature (e.g.~the celebrated LeBrun-Salamon conjecture~\cite{LBSa94}) or submanifolds (e.g.~the theory of calibrations~\cite{BH:jams}). Similarly, the problem of submanifold geometry that we address in this paper relies on a linear algebraic problem that we describe below.

We say that a real subspace $V$ of $\H^n$ is \emph{protohomogeneous} if there exists a connected Lie subgroup of $\Sp(1)\Sp(n)$ that acts transitively on the unit sphere of $V$.
A protohomogeneous subspace of $\H^n$ has constant quaternionic K\"{a}hler angle, a concept that is central in our study and that we recall now.
Let $\pi_V$ denote the orthogonal projection onto a vector subspace $V$,  and define 
\[
P_J=\pi_V\circ J,\quad \text{where } J\in\g{J}.
\] 
We say that $V$ has \emph{constant quaternionic K\"{a}hler angle} $(\varphi_1,\varphi_2,\varphi_3)$, with $\varphi_1\leq \varphi_2\leq\varphi_3$, if for any $v\in V$ the symmetric bilinear form
\[
L_v\colon\g{J}\times\g{J}\to\R,\quad L_v(J,J')=\langle P_J v, P_{J'}v\rangle,
\]
has eigenvalues $\cos^2(\varphi_i)\langle v,v\rangle$, $i\in\{1,2,3\}$.
We point out here the fact that the bilinear forms $L_v$, $v\in V$, described above do not necessarily diagonalize simultaneously (although we can prove \textit{a priori} that they do so for protohomogeneous subspaces of dimension greater or equal than~5, see Corollary~\ref{prop:in_spn}, and by classification results for dimension different from~3).

The first main result of this article is to classify, up to congruence by elements in $\Sp(1)\Sp(n)$, protohomogeneous subspaces of $\H^n$.
We state here the moduli space of such subspaces of dimension $k$ in $\H^n$ by presenting their possible quaternionic K\"{a}hler angles.
In Theorem~A, and in what follows, $\sqcup$ denotes disjoint union.

\begin{theoremA}
The moduli space $\mathcal{M}_{k,n}$ of non-zero protohomogeneous subspaces of dimension $k$ in $\H^n$, up to congruence in $\Sp(1)\Sp(n)$, is described in the following table:
\medskip

{\renewcommand{\arraystretch}{1.5}
\begin{tabular}{lllll}
\hline
$\mathcal{M}_{k,n}$ & $k\leq n$  & $n<k\leq \frac{4n}{3}$
& $\frac{4n}{3}<k\leq 2n$  & $k>2n$
\\%
\hline
$k\equiv 0 \,\mathrm{(mod\; 4)}$   & $(\g{R}_4^+\setminus\g{R}_4^-)
\sqcup (\g{R}^-_4\times\mathbb{Z}_2)$ & $\g{S}$ & $\{(0,\varphi,\varphi)\}_{\varphi\in[0,\frac{\pi}{2}]}$ & $\{(0,0,0)\}$
\\%
$k\equiv 2 \,\mathrm{(mod\; 4)}$
& $\{(\varphi,\frac{\pi}{2},\frac{\pi}{2})\}_{ \varphi\in[0,\frac{\pi}{2}]}$ & $\{(0,\frac{\pi}{2},\frac{\pi}{2})\}$
& $\{(0,\frac{\pi}{2},\frac{\pi}{2})\}$
& $\emptyset$
\\%
$k\neq 3$ \textup{odd}
& $\{(\frac{\pi}{2},\frac{\pi}{2},\frac{\pi}{2})\}$
& $\emptyset$ & $\emptyset$ & $\emptyset$
\\%
$k=3$ & $(\g{R}_3^+\setminus\g{R}_3^-) \sqcup (\g{R}^-_3\times\mathbb{Z}_2)$
& $\emptyset$
& $\{(\varphi,\varphi,\frac{\pi}{2})\}_{\varphi\in\{0,\frac{\pi}{3}\}}$ & $\{(0,0,\frac{\pi}{2})\}$
\\%
\hline
\end{tabular}}
\medskip

where $\Lambda=\{(\varphi_1,\varphi_2, \varphi_3)\in [0,\pi/2]^3: \varphi_1\leq \varphi_2\leq \varphi_3 \}$, and
\begin{align*}
\mathfrak{R}_3^+
&{}=\{ (\varphi,\varphi,\pi/2)\in \Lambda :\varphi\in[0,\pi/2]  \},\\
\mathfrak{R}_3^-
&{}=\{ (\varphi,\varphi,\pi/2)\in \Lambda :\varphi\in[\pi/3,\pi/2)   \},\\
\mathfrak{R}^+_4
&{}=\{  (\varphi_1,\varphi_2,\varphi_3)\in \Lambda :\cos(\varphi_1)+\cos(\varphi_2)-\cos(\varphi_3)\leq 1 \},\\
\mathfrak{R}^-_4
&{}=\{  (\varphi_1,\varphi_2,\varphi_3)\in \Lambda :\cos(\varphi_1)+\cos(\varphi_2)+\cos(\varphi_3)\leq 1,\,\varphi_3\neq \pi/2 \},\\
\mathfrak{S}
&{}=\{(\varphi_1,\varphi_2,\varphi_3)\in \Lambda : \cos(\varphi_1)+\cos(\varphi_2)+\varepsilon\cos(\varphi_3)=1,
\text{ for $\varepsilon=1$ or $\varepsilon=-1$}\}.
\end{align*}
\end{theoremA}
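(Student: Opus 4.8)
The plan is to treat the constant quaternionic K\"ahler angle as the fundamental invariant and to prove Theorem~A in two halves: a \emph{realization} statement (every triple in the tabulated region is the angle of some protohomogeneous subspace of $\H^n$, in the prescribed dimension range) and a \emph{rigidity} statement (the angle, together with at most one binary invariant, determines the $\Sp(1)\Sp(n)$-congruence class). Since it is already known that a protohomogeneous $V$ has constant quaternionic K\"ahler angle, and since the angle is manifestly $\Sp(1)\Sp(n)$-invariant, the whole content is to pin down exactly which angles occur and with what multiplicity. The natural engine for both halves is the classification of connected Lie groups acting transitively on spheres, together with the ways such a group can sit inside $\Sp(1)\Sp(n)$.

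I would start from the skew-symmetric operators $F_a=\pi_V\circ R_{J_a}|_V$ on $V$ (right multiplication by $i,j,k$ followed by the projection $\pi_V$), for which $L_v(J_a,J_b)=\langle F_a v,F_b v\rangle$; the constant-angle hypothesis says that the spectrum of $L_v$ is independent of the unit vector $v\in V$, and for $\dim V\ge 5$ the forms $L_v$ diagonalize simultaneously (Corollary~\ref{prop:in_spn}), endowing $V$ with a canonical splitting adapted to the three angles and reducing the analysis to a block-wise study. Feeding in a connected $H\subseteq\Sp(1)\Sp(n)$ transitive on the unit sphere of $V$, I would project $H$ to the $\Sp(n)$- and $\Sp(1)$-factors: the $\Sp(n)$-projection governs the module type of $V$ (real, complex, or quaternionic, via the sphere-transitive list $\SO,\mathsf{U},\mathsf{SU},\Sp\cdot\mathsf{U}(1),\Sp\cdot\Sp(1),G_2,\mathrm{Spin}(7),\mathrm{Spin}(9)$), while the $\Sp(1)$-projection controls how the frame $(R_i,R_j,R_k)$ is rotated, that is, the \emph{tilt} producing the angle. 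This organizes the possibilities exactly by the congruence class of $k$: a totally real module forces $(\tfrac\pi2,\tfrac\pi2,\tfrac\pi2)$ and accounts for odd $k\neq 3$; a complex module produces the one-parameter family $(\varphi,\tfrac\pi2,\tfrac\pi2)$ for $k\equiv 2$; the genuinely quaternionic modules produce the three-parameter regions for $k\equiv 0$; and $k=3$ is the exceptional $\Sp(1)$-module $\Image\H$, yielding the family $(\varphi,\varphi,\tfrac\pi2)$.

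For the realization and the precise shape of the regions I would put each candidate $V$ into a canonical form as the graph of a quaternion-linear map twisted by the tilt, and compute $L_v$ directly; the inequalities $\cos\varphi_1+\cos\varphi_2\pm\cos\varphi_3\le 1$ then emerge as the exact condition that such a tilted quaternionic module fits inside $\H^n$ in the given dimension range (a positivity/fit constraint on the Gram data), which also explains why the admissible region shrinks as $k$ grows past $n$, $\tfrac{4n}{3}$ and $2n$. The binary $\mathbb{Z}_2$ is the subtle point: in the interior region $\g{R}^-$ (small cosines, $\cos\varphi_1+\cos\varphi_2+\cos\varphi_3\le 1$ with $\varphi_3\neq\tfrac\pi2$) the canonical form admits a genuine chirality, namely two non-congruent completions of the Gram data, whereas on the remaining part $\g{R}^+\setminus\g{R}^-$ this freedom collapses and the angle alone is a complete invariant. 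The threshold $\varphi=\pi/3$ in the $k=3$ row is precisely the slice $2\cos\varphi\le 1$ of this inequality, a useful consistency check.

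The hardest part, I expect, is twofold. First, the dimension-$3$ case escapes the simultaneous-diagonalization Corollary, so the $L_v$ need not diagonalize uniformly and one must analyze the $\Sp(1)$-module $\Image\H$ and its tilts by hand in order to extract $\g{R}_3^\pm$; this is also where spurious transitive groups must be ruled out. Second, the rigidity half, establishing that the angle plus the single $\mathbb{Z}_2$ is a \emph{complete} congruence invariant, and in particular that exactly two classes occur over $\g{R}^-$ and exactly one elsewhere, requires a careful boundary analysis of the moduli together with a proof that the exceptional transitive groups $G_2,\mathrm{Spin}(7),\mathrm{Spin}(9)$ contribute nothing new. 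Controlling these degenerations at the walls $\cos\varphi_1+\cos\varphi_2\pm\cos\varphi_3=1$ is the crux, and is exactly what separates the regions $\g{R}^+$, $\g{R}^-$ and $\g{S}$.
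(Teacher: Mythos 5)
Your plan shares most of its skeleton with the paper's actual proof (Montgomery--Samelson/Borel, the simultaneous diagonalization of Corollary~\ref{prop:in_spn}, Gram-data normal forms, a chirality $\mathbb{Z}_2$), and your one real departure --- replacing the paper's topological step (Steenrod's theorem, Proposition~\ref{prop:hairyball}) by a transitive-group/commutant argument --- is in fact viable for $k\geq 5$: once the transitive group is known to lie in the $\Sp(n)$-factor (Proposition~\ref{prop:H_subgroup_spn}), the skew operators $\pi_V J_i|_V$ lie in the commutant of an irreducible sphere-transitive representation, which is a real division algebra, and this forces total reality for odd $k\geq 5$ and $(\varphi,\pi/2,\pi/2)$ for $k\equiv 2\pmod 4$ with no topology at all; your $\pi/3$ consistency check for $k=3$ is also correct. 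The first genuine gap is that your organizing device --- ``project $H$ to the $\Sp(1)$- and $\Sp(n)$-factors and read off the module type'' --- is not available until something like Proposition~\ref{prop:H_subgroup_spn} is proved, and that holds \emph{only} for $k\geq 5$; for $k\leq 4$ the projections of $H$ need not even preserve $V$ (for $V=\H v$ or $V=\g{J}v$ a transitive group is essentially the $\Sp(1)$-factor, whose $\Sp(n)$-projection is trivial). You flag $k=3$ as escaping the machinery, but $k=4$ escapes it in exactly the same way, and it is the $k=4$ classification --- which the paper carries out with \emph{no} protohomogeneity assumption, via isospectrality of the K\"ahler angle map together with the Gram analysis of Lemma~\ref{lemma:inequalities} and Proposition~\ref{prop:class_dim4} (and the analogous Proposition~\ref{prop:sk=3} for $k=3$) --- that defines the regions $\g{R}_4^{\pm}$, $\g{S}$, $\g{R}_3^{\pm}$ and hence essentially every entry of the table. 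Deferring both low-dimensional cases as ``hard parts to be done by hand,'' with no method indicated beyond ``compute $L_v$ directly,'' leaves precisely the core of Theorem~A unproved; the $k\geq 5$ reduction is the comparatively soft part.

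The second gap is the multiplicity count for $k=4l$, $l\geq 2$. You locate the $\mathbb{Z}_2$ in the $4$-dimensional Gram data (``two non-congruent completions''), which is correct for $l=1$ (the paper's $V_\pm$, Remark~\ref{rem:P1P2} and Proposition~\ref{prop:ineqk=4}), but for $l\geq 2$ one needs (i) a mechanism splitting a protohomogeneous $V$ into $\H$-orthogonal $4$-dimensional blocks with the same angle --- in the paper this is the $\mathsf{Cl}(3)$/$\mathsf{Cl}(2)$-module structure defined by the normalized projections $\bar P_i$, semisimplicity, and the Atiyah--Bott--Shapiro classification (Lemma~\ref{lemma:factorization}) --- and, crucially, (ii) the dichotomy that protohomogeneity forces \emph{all} blocks to carry the same chirality, the two pure types $(l,0)$ and $(0,l)$ being non-congruent while each forms a single congruence class (Proposition~\ref{prop:protosum}). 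Point (ii) cannot be absorbed into ``boundary analysis'': mixed-type sums with $l_+,l_-\geq 1$ have constant quaternionic K\"ahler angle yet are \emph{not} protohomogeneous --- this is exactly the source of Theorem~C --- so the claim that the moduli over $\g{R}_4^-$ is exactly $\mathbb{Z}_2$, rather than one class per partition $l=l_++l_-$, rests entirely on it, and no idea for it appears in your proposal. Finally, your worry about ruling out $G_2$, $\mathrm{Spin}(7)$, $\mathrm{Spin}(9)$ is a self-inflicted cost of making the transitive-group list central: the paper never inspects the list; it uses only simplicity and a dimension count to land inside $\Sp(n)$, after which the exceptional groups are harmless automatically.
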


This classification includes typical examples such as totally real subspaces (precisely those with quaternionic K\"{a}hler angle $(\pi/2,\pi/2,\pi/2)$), totally complex subspaces (with quaternionic K\"{a}hler angle $(0,\pi/2,\pi/2)$), quaternionic subspaces (with quaternionic K\"{a}hler angle $(0,0,0)$), subspaces of constant K\"{a}hler angle $\varphi\in(0,\pi/2)$ inside a totally complex vector subspace (with quaternionic K\"{a}hler angle $(\varphi,\pi/2,\pi/2)$), complexifications of subspaces of constant K\"{a}hler angle $\varphi\in(0,\pi/2)$ in a totally complex subspace (with quaternionic K\"{a}hler angle $(0,\varphi,\varphi)$), and $(\mathop{\rm Im}\H)v$, $v\in\H^n$, $v\neq 0$ (with quaternionic K\"{a}hler angle $(0,0,\pi/2)$).
However, there are some other non-classical examples.  Some of them were introduced in~\cite{damekricci}, but there are some others, which are basically presented and classified in Section~\ref{sec:low-dim}.
A basis of these subspaces can be calculated explicitly, but for $\g{R}_3^\pm$ and $\g{R}_4^\pm$ its expression is rather long.  See Proposition~\ref{prop:sk=3} for $\g{R}_3^\pm$ and Propositions~\ref{prop:class_dim4} and~\ref{prop:protosum} for $\g{R}_4^\pm$ to get further details.
Furthermore, there are non-congruent subspaces of $\H^n$ with the same K\"{a}hler angles. These correspond precisely to the intersections $\g{R}_3^+\cap\g{R}_3^-=\g{R}_3^-$ and $\g{R}_4^+\cap\g{R}_4^-=\g{R}_4^-$.

We point out here three main tools that have been essential to obtain this classification.
First we use the classical generalization of the hairy ball theorem regarding the possible rank of continuous distributions on spheres~\cite{Steenrod} in order to reduce the classification problem of real subspaces of $\H^n$ with constant quaternionic K\"{a}hler angle to subspaces of dimensions 3 and multiples of~$4$ (Section~\ref{sec:hairy-ball}).
Secondly, we provide a Lie theoretic argument relying on results by Borel~\cite{Bo50} and Montgomery and Samelson~\cite{MS43} on groups acting effectively and transitively on spheres, to prove that,
for subspaces of dimension greater or equal than~5,
the maps $L_v$ that are used to define quaternionic K\"{a}hler angle diagonalize simultaneously (Corollary~\ref{prop:in_spn}).
In third place, using the previous results, we can show that a protohomogeneous subspace of dimension $4l$ is the sum of protohomogeneous subspaces of dimension $4$ with the same quaternionic K\"{a}hler angle (Section~\ref{subsec:factorization}).
All this reduces the classification of protohomogeneous subspaces to dimensions~$3$ and~$4$. At this stage, we actually obtain the more general classification of real subspaces of dimensions $3$ and $4$ with constant quaternionic K\"ahler angle.
This is a (hard) problem of linear algebra that is solved in Section~\ref{sec:low-dim}. 
\medskip

The first consequence of Theorem~A is the classification of cohomogeneity one actions on quaternionic hyperbolic spaces $\H H^{n+1}$ up to orbit equivalence.
In fact, Berndt and Tamaru explained in~\cite{BT07} how to obtain this classification.
Consider the symmetric pair $(G,K)=(\Sp(1,n+1),\Sp(1)\times\Sp(n+1))$ representing the symmetric space $\H H^{n+1}$. 
We denote by $\g{g}=\g{k}\oplus\g{p}$ the corresponding Cartan decomposition, and let $\g{a}$ be a maximal abelian subspace of $\g{p}$, which is one-dimensional because $\H H^{n+1}$ is of rank one.
Let $\g{g}=\g{g}_{-2\alpha}\oplus\g{g}_\alpha
\oplus\g{g}_0\oplus\g{g}_\alpha\oplus\g{g}_{2\alpha}$ be the restricted root space decomposition of $\g{g}$ with respect~to~$\g{a}$.~Then, $\g{g}_\alpha$ is isomorphic to a quaternionic vector space $\H^n$ endowed with the standard quaternionic bilinear form, and $K_0\cong \Sp(1)\times\Sp(n)$, the connected Lie subgroup of $G$ whose Lie algebra is $\g{k}_0=\g{g}_0\cap \g{k}=N_{\g{k}}(\g{a})$, normalizes $\g{g}_\alpha$ and acts on $\g{g}_\alpha$ in the canonical way.
The classification of cohomogeneity one actions on $\H H^{n+1}$ can be obtained if we determine the protohomogeneous subspaces $V$ of $\g{g}_\alpha\cong\H^n$.
If $V$ is such a protohomogeneous subspace, we define the Lie subalgebra $\g{s}_V=\g{a}\oplus (\g{g}_\alpha\ominus V)\oplus\g{g}_{2\alpha}$ of $\g{g}$, and denote by $S_V$ the connected Lie subgroup of $G$ with Lie algebra $\g{s}_V$
(throughout this article $\ominus$ denotes the orthogonal complement of a vector subspace).
Then $N_{K_0}^0(S_V)S_V=N_{K_0}^0(V)S_V$ acts on $\H H^{n+1}$ with cohomogeneity one,
where $N_{K_0}^0(\cdot)$ denotes the connected component of the identity of the normalizer in $K_0$.
Knowing all such subspaces $V$ up to congruence by an element of $\Sp(1)\Sp(n)$ determines all cohomogeneity one actions on $\H H^{n+1}$ up to orbit equivalence.

Roughly twenty years after Berndt and Br\"uck~\cite{BB01} announced the first examples of cohomogeneity one actions using this procedure, we obtain the full classification of cohomogeneity one actions on quaternionic hyperbolic spaces up to orbit equivalence as a consequence of Theorem~A.
Together with the results by Berndt and Tamaru~\cite{BT07}, this finishes the classification of cohomogeneity one actions on non-compact symmetric spaces of rank one:

\begin{theoremB}
The moduli space of cohomogeneity one actions on $\H H^{n+1}$ up to orbit equivalence is given by the disjoint union
\[
\{N, K, \mathsf{SU}(1,n+1)\} \sqcup\bigsqcup_{k=1}^{4n} \mathcal{M}_{k,n}.
\]
The actions referenced here are:
\begin{enumerate}[{\rm (1)}]
\item $N$: the action that produces a horosphere foliation.
\item $K$: the action that produces a family of geodesic spheres centered at a point.
\item $\mathsf{SU}(1,n+1)$: the action that produces a family of tubes around a totally geodesic $\C H^{n+1}$.
\item $\mathcal{M}_{k,n}$: the cohomogeneity one actions of the connected Lie subgroups of $\Sp(1,n+1)$ with Lie algebras $N_{\g{k}_0}(V)\oplus\g{a}\oplus(\g{g}_\alpha\ominus V)\oplus\g{g}_{2\alpha}$, where $V$ is a protohomogeneous subspace of dimension~$k$ of $\g{g}_\alpha\cong \H^n$.\label{th:B:Mkn}
\end{enumerate}
\end{theoremB}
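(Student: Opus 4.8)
The plan is to assemble the statement from two inputs: the structural reduction of Berndt and Tamaru~\cite{BT07}, recalled in the paragraphs preceding the theorem, and the classification of protohomogeneous subspaces in Theorem~A. By~\cite{BT07}, up to orbit equivalence the acting group of any cohomogeneity one action on $\H H^{n+1}$ can be taken inside a maximal subgroup of $\Sp(1,n+1)$ of one of the two types mentioned in the introduction, namely parabolic or reductive. I would therefore treat the two types separately and then check that the resulting lists combine into the asserted disjoint union.

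For the parabolic type I would use the correspondence recalled before the theorem: the nilpotent construction $V\mapsto N_{K_0}^0(V)S_V$ yields a cohomogeneity one action for every protohomogeneous $V$, and two such subspaces produce orbit equivalent actions if and only if they are congruent under $\Sp(1)\Sp(n)$. Feeding Theorem~A into this correspondence identifies the orbit equivalence classes arising from subspaces of dimension $1\le k\le 4n$ with the moduli spaces $\mathcal{M}_{k,n}$, the extreme case $k=1$ giving the solvable foliation and $k\ge 2$ an action with a singular orbit of codimension $k$. The one parabolic type action not of this form is the action of the full nilpotent part $N$, producing the horosphere foliation: since $\g{a}$ always occurs in $\g{s}_V$, the group $N$ is never of the shape $N_{K_0}^0(V)S_V$ and must be listed on its own. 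This yields $\{N\}\sqcup\bigsqcup_{k=1}^{4n}\mathcal{M}_{k,n}$.

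For the reductive type the singular orbit is a totally geodesic submanifold of $\H H^{n+1}$, so I would run through the totally geodesic submanifolds ($\R H^m$, $\C H^m$, $\H H^m$, $\C H^{n+1}$ and a point), retaining those whose tubes have cohomogeneity one. A point gives the isotropy action $K$, with geodesic spheres as orbits, and the totally geodesic $\C H^{n+1}$ gives the action of $\mathsf{SU}(1,n+1)$. Each remaining candidate is either of higher cohomogeneity or, when of cohomogeneity one, can be positioned so as to contain in its boundary the point at infinity fixed by the parabolic subgroup; by transitivity of the isometry group on totally geodesic submanifolds of each congruence type, its tube action is then orbit equivalent to a parabolic one and is already counted (for instance, tubes around $\H H^m$ match a quaternionic subspace, of angle $(0,0,0)$). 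By contrast $K$ and $\mathsf{SU}(1,n+1)$ cannot be realized parabolically: a point orbit has codimension $4n+4>4n$, and a $\C H^{n+1}$ orbit would require a totally complex subspace of real dimension $2n+2$, impossible inside $\g{g}_\alpha\cong\H^n$. Hence these two survive as genuinely new actions.

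The hardest part will be confirming that the union is genuinely disjoint and exhaustive, that is, the matching of the reductive tubes against the protohomogeneous families in the previous paragraph, together with the separation of the distinguished actions. Different values of $k\ge 2$ are separated by the codimension of the singular orbit; the two foliations $N$ and the class $k=1$ are the two non-congruent homogeneous codimension one foliations, hence not orbit equivalent; and $K$ and $\mathsf{SU}(1,n+1)$ are separated from each other and from every family member by the dimension and the totally geodesic nature of their singular orbit. Once this bookkeeping is completed, combining it with the parabolic count identifies the moduli space with $\{N,K,\mathsf{SU}(1,n+1)\}\sqcup\bigsqcup_{k=1}^{4n}\mathcal{M}_{k,n}$, as claimed.
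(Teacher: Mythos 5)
Your proposal is correct and takes essentially the same route as the paper: the paper's proof is exactly this assembly of Theorem~A with the structure theory of Berndt--Br\"uck and Berndt--Tamaru recalled in the preliminaries, observing that the solvable foliation is recovered as the case $\dim V=1$ and the tubes around a totally geodesic $\H H^{\ell}$ as the case of a quaternionic subspace $V$, so that only $N$, $K$ and $\mathsf{SU}(1,n+1)$ remain outside the families $\mathcal{M}_{k,n}$. Your parabolic/reductive framing and the explicit disjointness checks are merely a more detailed rendering of the same argument, which the paper organizes instead by the type of singular orbit (none, totally geodesic, or non-totally geodesic).
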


We note that, in this classification, the action of $\Sp(1,\ell) \times \Sp(n+1-\ell)\subset \Sp(1,n+1)$ which gives tubes around a totally geodesic lower dimensional quaternionic hyperbolic space $\H H^\ell$, $\ell\in\{1,\dots,n\}$, in $\H H^{n+1}$ are included in item (\ref{th:B:Mkn}), where in this case $V$ is a quaternionic subspace of $\g{g}_\alpha\cong\H^n$ (hence, of quaternionic K\"{a}hler angle $(0,0,0)$) of real dimension $k=4(n-\ell+1)$.
Moreover, if we take $V$ a line in $\g{g}_\alpha$ (i.e.\ $k=1$), then $N_{K_0}^0(V)$ is trivial and we recover the action that gives rise to the so-called solvable foliation~\cite{BT03}.
\medskip

In our study of protohomogeneous subspaces of $\H^n$ we have also encountered non-congruent pairs of subspaces with the same constant quaternionic K\"{a}hler angles.
Moreover, we prove in Section~\ref{inhomo} that an $\H$-orthogonal direct sum of subspaces of dimension~4 with the same constant quaternionic K\"{a}hler angle is protohomogeneous if and only if any two factors are congruent under an element of $\Sp(n)$.
However, even if that direct sum is not protohomogeneous, it has constant quaternionic K\"{a}hler angle in some cases.
Thus, if we take $V$ a non-protohomogeneous subspace with constant quaternionic K\"{a}hler angle as above, and denote by $S_V$ the subgroup of $G$ whose Lie algebra is $\g{s}_V=\g{a}\oplus(\g{g}_\alpha\ominus V)\oplus\g{g}_{2\alpha}$, then: (1) since $V$ has constant quaternionic K\"{a}hler angle, tubes around $S_V\cdot o$ are isoparametric and have constant principal curvatures by~\cite[Theorem 4.5]{damekricci}, and (2) these tubes are not homogeneous by~\cite[Theorem 4.1]{BT07}. Hence, we have the following remarkable consequence:

\begin{theoremC}
There exist uncountably many inhomogeneous isoparametric families of hypersurfaces with constant principal curvatures in $\H H^{n+1}$ with $n\geq 7$, up to congruence.
\end{theoremC}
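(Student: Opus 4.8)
The plan is to produce, for an uncountable set of parameters, non-protohomogeneous subspaces $V$ of $\g{g}_\alpha\cong\H^n$ that nonetheless have constant quaternionic K\"ahler angle, and then to invoke the two structural facts recalled just before the statement. Indeed, by~\cite[Theorem 4.5]{damekricci} the constancy of the quaternionic K\"ahler angle of $V$ guarantees that the tubes around $S_V\cdot o$ form an isoparametric family with constant principal curvatures, while by~\cite[Theorem 4.1]{BT07} the failure of $V$ to be protohomogeneous forces these tubes to be inhomogeneous. Thus the whole problem reduces to exhibiting uncountably many pairwise non-congruent subspaces $V$ that are simultaneously non-protohomogeneous and of constant quaternionic K\"ahler angle.

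To build them I would exploit the $\mathbb{Z}_2$ appearing over $\g{R}_4^-$ in the $k\equiv 0\pmod 4$ row of Theorem~A. For each angle $\theta\in\g{R}_4^-$ this says that there are exactly two protohomogeneous $4$-dimensional subspaces $W_\theta^+,W_\theta^-$ of constant quaternionic K\"ahler angle $\theta$ that are non-congruent under $\Sp(1)\Sp(n)$; since $\Sp(n)\subseteq\Sp(1)\Sp(n)$, they are \emph{a fortiori} non-congruent under $\Sp(n)$. I would then set $V_\theta=W_\theta^+\oplus_{\H}W_\theta^-$, the $\H$-orthogonal direct sum, which fits inside $\H^n$ once $n$ is large enough; a count of the quaternionic dimensions spanned by the two factors is what pins the threshold at $n\ge 7$. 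By the criterion established in Section~\ref{inhomo}, an $\H$-orthogonal sum of two $4$-dimensional subspaces with the same angle is protohomogeneous if and only if the factors are $\Sp(n)$-congruent; since ours are not, $V_\theta$ is not protohomogeneous.

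It remains to check that $V_\theta$ does have constant quaternionic K\"ahler angle and that the resulting families are pairwise non-congruent. For the former, writing $v=v^++v^-$ with $v^\pm\in W_\theta^\pm$ and using $\H$-orthogonality (so that $Jv^\pm$ stays in the respective quaternionic span and the cross terms drop out), one gets $L_v=L_{v^+}+L_{v^-}$; hence the eigenvalues of $L_v$ are $\cos^2(\varphi_i)\langle v,v\rangle$ precisely when the two blocks $L_{v^+}$ and $L_{v^-}$ are diagonalized in a common basis of $\g{J}$. This is exactly the ``some cases'' alluded to before the statement, and I would verify it for the specific pair $W_\theta^\pm$ coming from Section~\ref{inhomo}. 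For the latter, the constant quaternionic K\"ahler angle is an isometric invariant of the isoparametric family (it is read off from the principal curvatures produced by~\cite[Theorem 4.5]{damekricci}), so distinct values of $\theta$ yield non-congruent families; as $\theta$ ranges over the uncountable region $\g{R}_4^-$ we obtain uncountably many of them.

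The main obstacle is the verification that $V_\theta$ genuinely has constant quaternionic K\"ahler angle: the two summands realize the same triple $\theta$ but, being non-$\Sp(n)$-congruent, their forms $L_{v^+}$ and $L_{v^-}$ need not be simultaneously diagonalizable \emph{a priori}, and it is only for a carefully chosen pair $W_\theta^\pm$ that the sum $L_v$ retains the eigenvalues $\cos^2(\varphi_i)\langle v,v\rangle$ for every $v$. Everything else---the isoparametricity with constant principal curvatures, the inhomogeneity, and the non-congruence across different $\theta$---then follows formally from the two cited theorems together with the invariance of the angle.
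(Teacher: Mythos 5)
Your proposal is correct and is essentially the paper's own proof: the paper likewise takes $V=V_+\oplus V_-$, the $\H$-orthogonal sum of the two inequivalent $4$-dimensional subspaces with the same quaternionic K\"ahler angle in $\g{R}_4^-$, derives non-protohomogeneity from Proposition~\ref{prop:protosum} (Corollary~\ref{cor:nonproto}), gets isoparametricity with constant principal curvatures and inhomogeneity from Theorem~\ref{th:particularcasedamekricci} and Theorem~\ref{th:bt:classification}, and pins the threshold $n\geq 7$ by the quaternionic-dimension count of Remark~\ref{rem:smaller_n_k4} (for $n=7$ one must restrict to triples with $\cos(\varphi_1)+\cos(\varphi_2)+\cos(\varphi_3)=1$, still an uncountable set). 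The verification you defer---choosing the pair so that both factors are adapted to a common canonical basis of $\g{J}$---is precisely condition (C5) of Section~\ref{inhomo}, and it is settled by constructing $V_+$ and $V_-$ via the explicit bases of Proposition~\ref{prop:class_dim4} (same $\{J_1,J_2,J_3\}$, opposite sign of $\varepsilon$) placed in $\H$-orthogonal quaternionic subspaces, after which the converse part of Lemma~\ref{lemma:factorization} yields $\Phi(V)=(\varphi_1,\varphi_2,\varphi_3)$.
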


We recall that the only examples of inhomogeneous isoparametric families of hypersurfaces with constant principal curvatures known so far in any irreducible Riemannian symmetric space are the celebrated examples in spheres by Ferus, Karcher and M\"unzner~\cite{FKM81} and a single example found in the Cayley hyperbolic plane~\cite{damekricci}. Thus, this is the first time an uncountable collection of such examples is produced in some symmetric space.

This article is organized as follows. We recall some basic facts about symmetric spaces in \S\ref{subsec:symmetric_spaces}, and of cohomogeneity one actions in \S\ref{sec:cohomo}.
The fundamental concept of quaternionic K\"{a}hler angle is recalled in Subsection~\ref{subsec:kahlerangle} together with some important notation that will be used throughout this article.
In Section~\ref{sec:hairy-ball} we use a generalization of the hairy ball theorem to rule out several possibilities for quaternionic K\"{a}hler angles.
Then, in Subsection~\ref{subsec:canonical} we prove a simultaneous diagonalization result for subspaces of constant quaternionic K\"{a}hler angle.  This is used in \S\ref{subsec:factorization} to prove a factorization theorem for protohomogeneous subspaces of dimension multiple of $4$.
Altogether, this reduces our study to dimensions~$3$ (\S\ref{subsec:dim3}) and~$4$ (\S\ref{subsec:dim4}).
The existence of inhomogeneous isoparametric hypersurfaces with constant principal curvatures in quaternionic hyperbolic spaces (Theorem~C) is established in Section~\ref{inhomo}.
We finally prove Theorems~A and~B in Section~\ref{sec:proof}.

\section{Preliminaries}\label{secPreliminaries}
	
	We start this section by recalling the main known results concerning cohomogeneity one actions on symmetric spaces of non-compact type and rank one. Cohomogeneity one actions with a non-totally geodesic singular orbit are built using the concept of {quaternionic K\"ahler angle}, which we recall in this section. Also, we will present some properties and sumarize all the examples of subspaces with constant quaternionic K\"ahler angle known up to the present. The main references for these notions and results are~\cite{BB01}, \cite{BT07}, and~\cite{damekricci}.
	
\subsection{Symmetric spaces of non-compact type and rank one}\label{subsec:symmetric_spaces}\hfill
	
	Hurwitz's theorem asserts that any normed real division algebra $\mathbb{F}$ is isomorphic to $\R$, $\C$, $\H$ or $\O$. The hyperbolic spaces over these algebras constitute the symmetric spaces of non-compact type and rank one. In other words, if $M$ is a symmetric space of non-compact type and rank one, then $M$ is either a real hyperbolic space $\R H^{n+1}$, $n\geq 1$, a complex hyperbolic space $\C H^{n+1}$, $n\geq 1$, a quaternionic hyperbolic space $\H H^{n+1}$, $n\geq 1$, or the Cayley hyperbolic plane $\O H^2$. As a symmetric space, any of these manifolds $M$ can be identified with a quotient $G/K$ of Lie groups, where $G$ is the connected component of the identity of the isometry group of $M$, up to a finite covering, and $K$ is the isotropy subgroup of $G$ corresponding to a certain point $o\in M$ that we fix from now on. Then one can take $G=\mathsf{SO}^0(1,n+1)$, $\mathsf{SU}(1,n+1)$, $\Sp(1,n+1)$, $\mathsf{F}_{4}^{-20}$ and $K=\mathsf{SO}(n+1)$, $\mathsf{S}(\mathsf{U}(1)\times\mathsf{U}(n+1))$, $\Sp(1)\times\Sp(n+1)$, $\mathsf{Spin}(9)$, depending on whether $\mathbb{F}=\R$, $\C$, $\H$, $\O$, respectively.
	
	We denote by  $\g{g}$ and $\g{k}$ the Lie algebras of $G$ and $K$, respectively, by $\mathcal{B}$ the Killing form of $\g{g}$, and by $\theta$ the Cartan involution of $\g{g}$ with respect to $\g{k}$.  Let $\g{g}=\g{k} \oplus\g{p}$ be the Cartan decomposition of $\g{g}$ induced by $\theta$. We have that  $\langle X, Y\rangle=-\mathcal{B}(X,\theta Y)$ is an inner product that restricted to $\g{p}$ induces a Riemannian metric on $G/K$ that makes $G/K$ isometric to $M$, up to homothety.
	
	Let $\g{a}$ be a maximal abelian subspace of~$\g{p}$, which is one-dimensional as  $M$ has rank one, and let
$
\g{g}=\g{g}_{-2\alpha}\oplus\g{g}_{\alpha} \oplus\g{g}_{0} \oplus \g{g}_{\alpha}\oplus\g{g}_{2\alpha}
$
be the corresponding restricted root space decomposition of $\mathfrak{g}$. Here, the root space $\g{g}_0$ splits as $\g{g}_0=\g{k}_0\oplus\g{a}$, where $\g{k}_0$ is the Lie algebra of $K_0=N_K(\g{a})$, the normalizer of $\g{a}$ in $K$, which also normalizes $\g{g}_\alpha$
and centralizes~$\g{g}_{2\alpha}$. Moreover,
$\g{g}=\g{k} \oplus \g{a} \oplus\g{n}$, where $\g{n}= \g{g}_{\alpha}\oplus\g{g}_{2\alpha}$, is an Iwasawa decomposition of $\g{g}$. When $\mathbb{F}=\R$, we have $\g{g}_{-2\alpha}=\g{g}_{2\alpha}=0$ and $\g{n}$ is abelian. Otherwise, $\g{n}$ is only two-step nilpotent. In fact, $\g{n}$ is isomorphic to the $(2n+1)$-dimensional Heisenberg algebra when $\mathbb{F}=\C$ and to a certain generalized Heisenberg algebra if $\mathbb{F}\in\{\H,\O\}$ (see \cite{BTV95}). Moreover, $\g{g}_{2\alpha}$, the center of $\g{n}$, is equal to the derived algebra of $\g{n}$, and has dimension $1$, $3$ or $7$ for $\mathbb{F}=\C$, $\H$ or $\O$, respectively.  In addition to this, we can identify $\g{g}_{\alpha}$ with $\R^{n}$, $\C^{n}$, $\H^{n}$, $\O$ for $\mathbb{F}=\R$, $\C$, $\H$, $\O$, respectively.  Indeed, $\g{g}_\alpha$ is a Clifford module over $\textsf{Cl}(m)$, where $m=\dim\g{g}_{2\alpha}$, which is the sum of equivalent Clifford modules if $m=3$, and is irreducible if $m=7$.
	
	The subalgebra $\g{a} \oplus \g{n}$ of $\g{g}$ is solvable and $\g{n}$ is its derived subalgebra.
	We denote by $A$ and by $N$ the connected closed Lie subgroups of $G$ with Lie algebras $\g{a}$ and $\g{n}$, respectively. Then, $G=KAN$ is an Iwasawa decomposition of $G$ and $AN$ is diffeomorphic to $M$. Furthermore, if we pull back the metric on  $M$  to $AN$ we get a left-invariant Riemannian metric on $AN$. Thus, $M$ is isometric to the solvable Lie group $AN$ endowed with a left-invariant metric. As such, it is an example of a Damek-Ricci space (see~\cite{BTV95}).

\subsection{Cohomogeneity one actions on hyperbolic spaces}\label{sec:cohomo}\hfill

	We can distinguish three different classes of cohomogeneity one actions on symmetric spaces of non-compact type and rank one, up to orbit equivalence. It was shown in \cite{BB01} that any such action has at most one singular orbit.
	
	\subsubsection*{Actions with no singular orbit}\hfill

	Berndt and Tamaru~\cite{BT03} classified actions without singular orbits. They proved that there are exactly two such actions up to orbit equivalence.
	\begin{enumerate}[{\rm (i)}]
		\item The action of $N$ on $\mathbb{F}H^{n+1}$ has cohomogeneity one. The orbits of this action are mutually congruent horospheres which form a regular Riemannian foliation on $\mathbb{F}H^{n+1}$, called the horosphere foliation.
		\item Let $S$ be the connected Lie subgroup of $AN$ with Lie algebra $\g{s}=\g{a} \oplus \g{w} \oplus \g{z}$, where $\g{w}$ is a vector subspace of  $\g{g}_\alpha$ of codimension one. Different choices of $\g{w}$ lead to conjugate actions. The action of $S$ on $\mathbb{F} H^{n+1}$ has cohomogeneity one and its orbits form a regular Riemannian foliation on $\mathbb{F}H^{n+1}$, called the solvable foliation.
	\end{enumerate}

	\subsubsection*{Actions with a totally geodesic singular orbit}\hfill

	Berndt and Br\"uck~\cite{BB01} classified cohomogeneity one actions on $\mathbb{F}H^{n+1}$ with a totally geodesic singular orbit $F$. The remaining orbits, which are principal, are tubes around the totally geodesic submanifold $F$, where:
	\begin{enumerate}[{\rm (i)}]
		\item $\mathbb{F}=\mathbb{R}$: $F\in \{\text{point},\mathbb{R}H^1,\ldots, \mathbb{R}H^{n-1} \}$;
		\item $\mathbb{F}=\mathbb{C}$: $F\in \{\text{point},\mathbb{C}H^1,\ldots, \mathbb{C}H^{n},\mathbb{R}H^{n+1} \}$;
		\item $\mathbb{F}=\mathbb{H}$: $F\in \{\text{point},\mathbb{H}H^1,\ldots, \mathbb{H}H^{n},\mathbb{C}H^{n+1} \}$;
		\item $\mathbb{F}=\mathbb{O}$: $F\in \{\text{point},\mathbb{O}H^1, \mathbb{H}H^{2} \}$.
	\end{enumerate}

	\subsubsection*{Actions with a non-totally geodesic singular orbit}\hfill

	Berndt and Tamaru~\cite{BT07} gave a construction method of all cohomogeneity one actions with a non-totally geodesic singular orbit in hyperbolic spaces. Such actions only appear if $\mathbb{F}\neq\R$. 
	We recall that $K_0$ acts on $\g{g}_\alpha$ by the adjoint representation, and hence, if $V$ is a real subspace of $\g{g}_\alpha$,  $N^0_{K_0}(V)$ will denote the connected component of the identity of the normalizer of $V$ in $K_0$.
	
\begin{theorem}\label{th:bt:classification}
Let $\g{g}=\g{k}\oplus\g{a}\oplus\g{n}$ be an Iwasawa decomposition of the Lie algebra of the isometry group of the hyperbolic space $M=\mathbb{F}H^{n+1}$, $\mathbb{F}\in\{\mathbb{C},\mathbb{H},\mathbb{O}\}$.
\begin{enumerate}[{\rm (i)}]
\item Let $V$ be a non-zero vector subspace of $\g{g}_\alpha$ such that $N^0_{K_0}(V)$ acts transitively on the unit sphere of $V$. Denote by $\g{g}_\alpha\ominus V$ the orthogonal complement of $V$ in $\g{g}_\alpha$. Then the connected subgroup of $G$ with Lie algebra $N_{\g{k}_0}(V)\oplus\g{a}\oplus(\g{g}_\alpha\ominus V)\oplus\g{g}_{2\alpha}$ acts on $M$ with cohomogeneity one, and the orbit through $o$ is singular, provided that $\dim V\geq 2$. Furthermore, every cohomogeneity one action on $M$ with a non-totally geodesic singular orbit can be obtained in this way up to orbit equivalence.\label{th:bt:i}
\item Let $V$ and $V'$ be vector subspaces of $\g{g}_\alpha$ as in \emph{(\ref{th:bt:i})}, and assume that the corresponding cohomogeneity one actions have non-totally geodesic singular orbits. Then, these actions are orbit equivalent if and only if there exists $k\in K_0$ such that $\Ad(k)V = V'$.
\end{enumerate}
\end{theorem}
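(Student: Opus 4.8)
\emph{Outline.} The plan is to prove the two assertions of \eqref{th:bt:i} in turn --- first the \emph{construction}, then the \emph{exhaustion} --- and finally to deduce the orbit equivalence statement (ii). Throughout I use the structure of the restricted root space decomposition recalled in \S\ref{subsec:symmetric_spaces}.

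\emph{Construction in (i).} First I would check that
\[
\g{h}=N_{\g{k}_0}(V)\oplus\g{a}\oplus(\g{g}_\alpha\ominus V)\oplus\g{g}_{2\alpha}
\]
is a subalgebra of $\g{g}$. This is a direct verification from the bracket relations: $[\g{g}_\alpha,\g{g}_\alpha]\subseteq\g{g}_{2\alpha}$, the center $\g{g}_{2\alpha}$ is abelian, $\g{a}$ rescales each root space (so it preserves $\g{g}_\alpha\ominus V$), $\g{k}_0$ centralizes $\g{a}$ and $\g{g}_{2\alpha}$ and normalizes $\g{g}_\alpha$, while $N_{\g{k}_0}(V)$ preserves $V$ and, acting by isometries on $\g{g}_\alpha$, also its orthogonal complement $\g{g}_\alpha\ominus V$. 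Writing $S_V$ for the connected subgroup with Lie algebra $\g{a}\oplus(\g{g}_\alpha\ominus V)\oplus\g{g}_{2\alpha}\subseteq\g{a}\oplus\g{n}$ and $H$ for the subgroup with Lie algebra $\g{h}$, I would observe that $N^0_{K_0}(V)$ normalizes $S_V$ and fixes $o$, so that $H\cdot o=S_V\cdot o$; in the solvable model $M\cong AN$ this orbit is the submanifold $S_V$ itself, of codimension $\dim V$ and with normal space $V$ at $o$. Since $S_V\subseteq AN$ meets $K$ only in the identity, the isotropy is $H_o=H\cap K=N^0_{K_0}(V)$, whose slice representation on the normal space $V$ is, by hypothesis, transitive on the unit sphere. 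The standard criterion on Hadamard manifolds then yields that $H$ acts with cohomogeneity one, the principal orbits being the distance tubes around $H\cdot o$; when $\dim V\geq 2$ this orbit has codimension $\geq 2$ and is therefore singular.

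\emph{Exhaustion in (i).} Now let $H$ act on $M$ with cohomogeneity one and a non-totally-geodesic singular orbit $F=H\cdot o$ (by \cite{BB01} there is at most one singular orbit, and we may translate it through $o$). Replacing $H$ by the largest connected group with the same orbits, I would embed it in a maximal connected subgroup and invoke the dichotomy that such subgroups are either reductive or parabolic. If $H$ were contained in a reductive maximal subgroup, the latter would stabilize a totally geodesic submanifold and the cohomogeneity one orbits would be its tubes, forcing $F$ to be totally geodesic --- a contradiction. Hence $H$ lies in a parabolic subgroup; since all proper parabolics of the rank one group $G$ are conjugate to $Q=K_0AN$, the group $H$ fixes the boundary point stabilized by $Q$, i.e.\ $\g{h}\subseteq\g{q}=\g{k}_0\oplus\g{a}\oplus\g{g}_\alpha\oplus\g{g}_{2\alpha}$. \textbf{This reduction to the parabolic case is the main obstacle}, as it is where the global classification of maximal subgroups and the identification of the reductive case with totally geodesic orbits are genuinely used.

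\emph{Normal form and orbit equivalence (ii).} With $\g{h}\subseteq\g{q}$, I would show that cohomogeneity one together with a singular orbit through $o$ forces, up to conjugation in $K_0$, that $\g{h}$ contains $\g{a}\oplus\g{g}_{2\alpha}$ and splits as $\g{h}=\g{h}_{\g{k}_0}\oplus\g{a}\oplus\g{w}\oplus\g{g}_{2\alpha}$ with $\g{w}\subseteq\g{g}_\alpha$ and $\g{h}_{\g{k}_0}\subseteq\g{k}_0$ normalizing $\g{w}$; setting $V=\g{g}_\alpha\ominus\g{w}$, the normal space to $F$ at $o$ is $V$, and cohomogeneity one means precisely that $\g{h}_{\g{k}_0}$ acts transitively on the unit sphere of $V$, while maximality gives $\g{h}_{\g{k}_0}=N_{\g{k}_0}(V)$ --- exactly the form in \eqref{th:bt:i}. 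For (ii), if $\Ad(k)V=V'$ with $k\in K_0$, then $\Ad(k)$ fixes $\g{a}$ and $\g{g}_{2\alpha}$, maps $N_{\g{k}_0}(V)$ to $N_{\g{k}_0}(V')$ and $\g{g}_\alpha\ominus V$ to $\g{g}_\alpha\ominus V'$, so it conjugates the two groups and the isometry $k$ is an orbit equivalence. Conversely, an orbit equivalence is realized by an isometry $g$ carrying the (intrinsically distinguished) singular orbit $F_V$ to $F_{V'}$; after composing with elements of the acting groups I would arrange that $g$ fixes $o$ and sends the fixed boundary point of the first action to that of the second, placing $g$ in the common stabilizer $K\cap Q=K_0$, whence $\Ad(g)V=V'$.
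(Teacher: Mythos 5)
Your proposal cannot be compared against an internal argument, because the paper does not prove this statement: Theorem~2.1 is quoted verbatim from Berndt and Tamaru \cite{BT07} (see \S2.2), and the present paper only \emph{uses} it, its own contribution being the classification of the subspaces $V$ that feed into it. Judged against the actual proof in \cite{BT07}, your constructive half of (i) is essentially correct and standard: the bracket checks showing $N_{\g{k}_0}(V)\oplus\g{a}\oplus(\g{g}_\alpha\ominus V)\oplus\g{g}_{2\alpha}$ is a subalgebra (indeed $\g{s}_V$ is an ideal, so $H=N^0_{K_0}(V)\ltimes S_V$), the identification of the orbit through $o$ with $S_V\cdot o$ of codimension $\dim V$, and the slice-transitivity criterion producing tube orbits are all as in \cite{BB01,BT07}. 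The easy direction of (ii) is likewise fine.

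There are, however, genuine gaps in the remainder. First, you flag the reductive/parabolic dichotomy as ``the main obstacle,'' but in \cite{BT07} that reduction is the comparatively soft step (a reductive subgroup is $\theta$-stable up to conjugation, hence has a totally geodesic orbit; since $\C H^{n+1}$, $\H H^{n+1}$, $\O H^2$ admit no totally geodesic hypersurfaces and the action has no exceptional orbits, that orbit must be the singular one, a contradiction). The real technical heart is what you compress into one sentence: the normal form inside $\g{q}=\g{k}_0\oplus\g{a}\oplus\g{g}_\alpha\oplus\g{g}_{2\alpha}$. A subalgebra $\g{h}\subseteq\g{q}$ need \emph{not} split along this decomposition --- its elements can have mixed components, so $\g{h}$ may be a graph over its projections --- and ruling this out, showing that after conjugation $\g{g}_{2\alpha}\subseteq\g{h}$, producing the $\g{a}$-summand, and separating the singular-orbit case from the foliation cases is where most of the work in \cite{BT07} lies; asserting ``I would show that cohomogeneity one forces the splitting'' leaves the theorem unproved. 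Second, ``maximality gives $\g{h}_{\g{k}_0}=N_{\g{k}_0}(V)$'' is not right as stated: the acting group need not contain the full normalizer; rather, enlarging $\g{h}_{\g{k}_0}$ to $N_{\g{k}_0}(V)$ does not change the orbits (slice transitivity already holds), which is why the conclusion is only claimed up to orbit equivalence. Third, in the converse of (ii), placing the orbit equivalence $g$ in $K_0=K\cap Q$ presupposes that $g$ fixes the point at infinity $\xi$ determined by $\g{a}$; that $\xi$ is canonically attached to the orbit foliation (e.g.\ as a fixed point at infinity of the maximal connected group preserving the orbits) is exactly the kind of statement that needs proof, and you assert it without argument. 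So the skeleton matches \cite{BT07}, but the two load-bearing steps --- the normal-form analysis in the parabolic and the boundary-point rigidity in (ii) --- are missing.
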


\subsection{Quaternionic K\"ahler angle}\label{subsec:kahlerangle}\hfill

	The metric and the quaternionic K\"ahler structure on $\mathbb{H}H^{n+1}$ induce a positive definite inner product $\langle\cdot,\cdot\rangle$ on $\g{g}_\alpha$ and a quaternionic structure $\mathfrak{J}$ on $\g{g}_\alpha$, respectively, such that $\g{g}_\alpha$ is isomorphic to $\mathbb{H}^{n}$ as a (right) quaternionic Euclidean space. Here, by a quaternionic structure $\g{J}$ we understand a $3$-dimensional vector subspace of  $\mathrm{End}_\R(\H^n)$, the space  of real endomorphisms of $\H^n\cong\R^{4n}$, admitting a basis $\{J_1,J_2,J_3\}$ of orthogonal transformations of $\H^n\cong\R^{4n}$ such that $J_i^2=-\Id$ and $J_iJ_{i+1}=J_{i+2}=-J_{i+1}J_i$, for each $i\in\{1,2,3\}$ (indices modulo~3).
Such a basis is called a \emph{canonical basis} of the quaternionic structure~$\g{J}$.
Sometimes it is helpful to regard $\g{J}$ as endowed with a positive definite inner product that makes it isometric to the Euclidean $3$-space $\R^3$, and such that the elements of $\g{J}$ that are orthogonal complex structures of $\H^n$ constitute the unit sphere $\mathbb{S}^2\subset\g{J}$ with respect to such inner product.
Throughout this article, if $v$ is a vector in $\H^n$ and $V$ is a real subspace of $\H^n$ (i.e.\ a vector subspace of the real vector space $\R^{4n}$ with the underlying real vector space structure of $\H^n$), we will denote by $\H v=\R v\oplus \g{J}v$ and by $\H V=V+\g{J}V$ the quaternionic spans of $v\in \H^n$ and of $V\subset\H^n$, respectively; sometimes we will also write $(\mathrm{Im}\, \H)v$ to refer~to~$\g{J}v$.
	
	Theorem~\ref{th:bt:classification} shows the crucial role played by real subspaces $V$ of $\g{g}_\alpha\cong\H^{n}$  and their behavior with respect to $K_0$ in the classification problem of cohomogeneity one actions on $\H H^{n+1}$. Note that the effectivization of  $K_0$ on $\g{g}_\alpha\cong\H^n$ is the Lie group $\Sp(1)\Sp(n)=(\Sp(1)\times \Sp(n))/\{\pm(1,\Id)\}$, which acts in the standard way: $(q,A)\cdot v=Avq^{-1}$, where $q\in \Sp(1)$ and $A\in\Sp(n)$. Thus, in this subsection we gather some important terminology and useful facts to study real subspaces of a quaternionic Euclidean space, up to congruence by elements of $\Sp(1)\Sp(n)$.

	Firstly, motivated by Theorem~\ref{th:bt:classification}, we will say that a real subspace $V\subset \H^n$ is \textit{protohomogeneous} if there is a connected subgroup of $\Sp(1)\Sp(n)$ that acts transitively on the unit sphere of~$V$. Equivalently, $V$ is protohomogeneous if the connected Lie group $N^0_{\Sp(1)\Sp(n)}(V)$ acts transitively on the unit sphere of $V$. Note that protohomogeneous subspaces $V$ of $\g{g}_\alpha\cong\H^n$ are precisely those inducing cohomogeneity one actions on $\H H^{n+1}$ via the construction in Theorem~\ref{th:bt:classification}(i). We will also say that two real subspaces $V$ and $W$ of $\H^n$ are \emph{equivalent} if there exists an element $T\in \Sp(1)\Sp(n)$ such that $T V= W$. Observe that, by Theorem~\ref{th:bt:classification}(ii), $V$ and $W$ are equivalent protohomogeneous subspaces of $\g{g}_\alpha\cong\H^n$ if and only if they induce orbit equivalent cohomogeneity one actions on $\H H^{n+1}$.
	
	Let us now recall a useful description of the action of $\Sp(1)\Sp(n)$ on $\H^{n}$. Let $\{X_1,\ldots,X_{n}\}$ and $\{Y_1,\ldots,Y_{n}\}$ be two $\H$-orthonormal bases of $\H^{n}$, and let $\{J_1, J_2, J_3\}$ and $\{J_1', J_2', J_3'\}$ be two canonical bases of the quaternionic structure of $\H^{n}$. 
Then, there exists a unique $T\in \Sp(1)\Sp(n)$ such that
	$T(X_i)=Y_i$ and $T J_j=J_j' T$ for all $i\in\{1,\ldots, n\}$ and all $j\in\{1,2,3\}$. Conversely, any $\R$-linear endomorphism of $\H^{n}$ which maps $\H$-orthonormal bases of $\H^{n}$ to $\H$-orthonormal bases of $\H^{n}$ and intertwines canonical bases of the quaternionic structure of $\H^{n}$ in the above described fashion lies in $\Sp(1)\Sp(n)$.
	
	Let $V$ be a real vector subspace of the quaternionic Euclidean space $\H^n$. The \textit{K\"ahler angle} of a non-zero vector  $v\in V$ with respect to a non-zero $J\in \mathfrak{J}$  and $V$ is defined to be the angle between $J v$ and $V$. Equivalently, it is the value $\varphi \in [0,\pi/2]$ such that $\langle P_J v, P_J v \rangle =\cos^2(\varphi)\langle v,  v \rangle$, where $P_J:=\pi_V J$ and we denote by $\pi_V$ the orthogonal projection onto $V$.

	The following lemma was essentially proved by Berndt and Br\"uck \cite[Lemma~3]{BB01}. We state it in a somewhat different form following~\cite[Theorem~3.1]{damekricci}, where it was proved in the more general context of subspaces of Clifford modules.
	
\begin{lemma} \label{lemma:qKa}
Let $V$ be a real subspace of $\H^n$ and let $v\in V$ be a non-zero vector. Then there exists a canonical basis $\{J_{1},J_2, J_3\}$ of $\mathfrak{J}$ and a uniquely defined triple $(\varphi_1,\varphi_2,\varphi_3)$,  such that:
\begin{enumerate}[{\rm (i)}]
\item $\varphi_i$ is the K\"ahler angle of $v$ with respect to $J_{i}$ for each $i\in\{1,2,3\}$,
\item $\langle P_{i}v,P_{j}v\rangle=0$ for every $i\neq j$, where $P_i =\pi_V J_i$.
\item $\varphi_1\leq\varphi_2\leq\varphi_3$.
\item $\varphi_1$ is minimal and $\varphi_3$ is maximal among the K\"ahler angles of $v$ with respect to all non-zero elements of $\mathfrak{J}$.
\end{enumerate}
Indeed, $\{J_{1},J_2, J_3\}$ is a basis of $\mathfrak{J}$ with respect to which the symmetric bilinear form
\[
L_v\colon\mathfrak{J}\times\mathfrak{J}\to\R,\qquad L_v(J,J'):=\langle P_J v, P_{J'}v\rangle,
\]
has a diagonal matrix expression with eigenvalues $\cos^2(\varphi_i)\langle v,v\rangle$, $i\in\{1,2,3\}$.
\end{lemma}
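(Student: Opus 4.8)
The plan is to recognize the map $L_v$ as an ordinary symmetric bilinear form on the three-dimensional Euclidean space $\g{J}\cong\R^3$ and to diagonalize it by the spectral theorem; the only genuinely quaternionic ingredient is the dictionary between orthonormal bases of $\g{J}$ and canonical bases of the quaternionic structure, and that dictionary is the step I expect to require the most care. First I would check that $L_v$ is well defined as a symmetric bilinear form on $\g{J}$: bilinearity is clear since $J\mapsto P_J v=\pi_V J v$ is $\R$-linear in $J$, and symmetry is immediate from the symmetry of $\langle\cdot,\cdot\rangle$. Thus $L_v$ is a symmetric bilinear form on the Euclidean space $(\g{J},\langle\cdot,\cdot\rangle)$, the inner product on $\g{J}$ being normalized so that its unit sphere consists of the orthogonal complex structures of $\H^n$. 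By the spectral theorem there is an orthonormal basis $\{J_1,J_2,J_3\}$ of $\g{J}$ in which $L_v$ is diagonal, with diagonal entries $\lambda_i=L_v(J_i,J_i)$.

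Next I would translate this into quaternionic language. Each $J_i$ is a unit vector of $\g{J}$, hence an orthogonal complex structure with $J_i^2=-\Id$; and since distinct $J_i$ are orthogonal in $\g{J}$ they anticommute, with $J_iJ_{i+1}$ again a unit complex structure orthogonal to both, so equal to $\pm J_{i+2}$. Replacing $J_3$ by $-J_3$ if necessary (an operation that preserves orthonormality and, since $P_{-J}v=-P_Jv$, leaves all K\"ahler angles unchanged), we may assume $\{J_1,J_2,J_3\}$ is a canonical basis. By the definition of the K\"ahler angle, $\lambda_i=\langle P_{J_i}v,P_{J_i}v\rangle=\cos^2(\varphi_i)\langle v,v\rangle$, where $\varphi_i$ is the K\"ahler angle of $v$ with respect to $J_i$; this gives (i) and the stated eigenvalues. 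Property (ii) is exactly the vanishing of the off-diagonal entries $L_v(J_i,J_j)=\langle P_iv,P_jv\rangle$ for $i\neq j$.

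For (iii)--(iv) and uniqueness I would argue at the level of eigenvalues. Reordering the basis so that $\lambda_1\geq\lambda_2\geq\lambda_3$ yields $\varphi_1\leq\varphi_2\leq\varphi_3$, because $\cos^2$ is strictly decreasing on $[0,\pi/2]$; this reordering is realized as a permutation of the axes combined with a sign change so as to stay in $\mathsf{SO}(3)$, hence to remain a canonical basis, without altering the $\varphi_i$. For (iv), any unit $J=\sum_i a_iJ_i\in\g{J}$ (so $\sum_i a_i^2=1$) satisfies $\langle P_Jv,P_Jv\rangle=L_v(J,J)=\big(\sum_i a_i^2\cos^2(\varphi_i)\big)\langle v,v\rangle$, a convex combination of the $\cos^2(\varphi_i)$; hence this quantity attains its maximum $\cos^2(\varphi_1)$ at $J=J_1$ and its minimum $\cos^2(\varphi_3)$ at $J=J_3$, so that $\varphi_1$ and $\varphi_3$ are respectively the minimal and maximal K\"ahler angles of $v$ over all nonzero $J\in\g{J}$. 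Finally, uniqueness of the triple follows because the eigenvalues $\cos^2(\varphi_i)\langle v,v\rangle$ of $L_v$ are intrinsic and $\cos^2$ is injective on $[0,\pi/2]$, so the ordered triple $(\varphi_1,\varphi_2,\varphi_3)$ depends only on $v$ and $V$, even though the canonical basis itself need not be unique when eigenvalues coincide.

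The delicate point, and the one I would treat most carefully, is the identification of orthonormal bases of $\g{J}$ with canonical bases, together with the orientation and sign bookkeeping needed to keep the diagonalizing basis canonical while simultaneously imposing the ordering $\varphi_1\leq\varphi_2\leq\varphi_3$; everything else reduces to the spectral theorem and the definition of the K\"ahler angle.
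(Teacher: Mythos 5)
Your proof is correct. A point of context: the paper itself does not prove this lemma at all---it states that it was essentially proved by Berndt and Br\"uck \cite{BB01} (Lemma~3) and refers to \cite{damekricci} (Theorem~3.1), where it is established in the more general setting of Clifford modules---so there is no in-paper argument to compare against; your spectral-theorem argument is precisely the standard one underlying those references. The two points that genuinely require care are handled correctly: first, the Euclidean structure on $\g{J}$ is such that orthogonality of two unit elements is equivalent to anticommutation, so an orthonormal basis of $\g{J}$ consists of pairwise anticommuting orthogonal complex structures whose pairwise products lie in $\g{J}$, and it becomes a canonical basis after at most one sign flip, which changes neither the diagonality of $L_v$ nor any K\"ahler angle; second, the reordering needed for $\varphi_1\leq\varphi_2\leq\varphi_3$ can be realized inside $\SO(3)$ (even permutations, or odd permutations composed with a sign change), hence preserves canonicity. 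One cosmetic remark: in (iv) you compute only over unit elements $J\in\g{J}$, whereas the statement refers to all non-zero elements; this is harmless, since the K\"ahler angle is by definition the angle between $Jv$ and $V$, which is invariant under rescaling of $J$, so extremality over the unit sphere of $\g{J}$ is the same as extremality over $\g{J}\setminus\{0\}$.
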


The previous lemma allows us to introduce the following definition~\cite{BB01}. If $V$ is a real subspace of $\H^n$, the \emph{quaternionic K\"ahler angle} of a non-zero vector $v\in V$ with respect to $V$ is the triple $(\varphi_1,\varphi_2,\varphi_3)$ given in Lemma~\ref{lemma:qKa}. Sometimes we will also say that $v\in V$ has quaternionic K\"ahler angle $(\varphi_1,\varphi_2,\varphi_3)$ with respect to $V$ and to the canonical basis $\{J_1,J_2,J_3\}$ of $\g{J}$, in order to specify that the basis $\{J_1,J_2,J_3\}$ is under the conditions of Lemma~\ref{lemma:qKa}. A linear subspace $V$ of $\H^n$ is said to have \emph{constant quaternionic K\"ahler angle} $\Phi(V)=(\varphi_1,\varphi_2,\varphi_3)$ if the triple $(\varphi_1,\varphi_2,\varphi_3)$ is independent of the non-zero (or by linearity, unit) vector $v\in V$. In this work, whenever we use the notation $\Phi(V)$ we will implicitly assume that $V$ has constant quaternionic K\"ahler angle.

\begin{remark}
Note that the $J_i \in \g{J}$ defined in Lemma~\ref{lemma:qKa} may depend on $v\in V$.
This is true, even in the case that $V$ has constant quaternionic K\"{a}hler angle.
For example $V=\mathop{\rm Im}\H\subset \H$ has constant quaternionic K\"{a}hler angle $\Phi(V)=(0,0,\pi/2)$, but the basis $\{J_1,J_2,J_3\}$ of Lemma~\ref{lemma:qKa} cannot be chosen independently of $v\in V$.
However, we will prove that, under certain hypotheses (see Corollary~\ref{prop:in_spn} or Proposition~\ref{prop:class_dim4}), the $J_i$ can be chosen independently of $v\in V$.  This is one of the crucial results in this article.
\end{remark}

The following result is known (see~\cite[p.~229]{BB01}), but we find it instructive to include~a~proof.

\begin{lemma}\label{lemma:protohomogeneous}
Let $V\subset \H^n$ be a protohomogeneous subspace. Then, $V$ has constant quaternionic K\"ahler angle.
\end{lemma}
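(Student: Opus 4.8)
\emph{Proof proposal.}
The plan is to exploit the equivariance of the bilinear form $L_v$ under the action of $\Sp(1)\Sp(n)$ and then to invoke transitivity. Throughout, let $T\in N^0_{\Sp(1)\Sp(n)}(V)$, so that $T$ is an isometry of $\H^n$ with $TV=V$.

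First I would record two elementary equivariance facts. Since $T$ is orthogonal and preserves $V$, it also preserves the orthogonal complement $V^\perp$, and therefore commutes with the orthogonal projection $\pi_V$; that is, $\pi_V\circ T=T\circ\pi_V$. Second, conjugation by $T$ preserves the quaternionic structure: by the description of the action recalled above, $\Ad(T)\g{J}=\g{J}$, and because $T$ carries canonical bases of $\g{J}$ to canonical bases, the induced map $\Ad(T)|_{\g{J}}$ is an orthogonal transformation of $(\g{J},\langle\cdot,\cdot\rangle)\cong\R^3$.

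Next I would compute how $L_v$ transforms. Writing $P_J=\pi_V\circ J$ and using the identity $JT=T\,\Ad(T^{-1})(J)$ together with the two facts above, for any $J\in\g{J}$ and $v\in V$ we obtain
\[
P_J(Tv)=\pi_V\bigl(JTv\bigr)=\pi_V\bigl(T\,\Ad(T^{-1})(J)\,v\bigr)=T\,\pi_V\bigl(\Ad(T^{-1})(J)\,v\bigr)=T\,P_{\Ad(T^{-1})(J)}(v).
\]
Since $T$ is an isometry, this yields
\[
L_{Tv}(J,J')=\langle P_J(Tv),P_{J'}(Tv)\rangle=\langle P_{\Ad(T^{-1})(J)}v,\,P_{\Ad(T^{-1})(J')}v\rangle=L_v\bigl(\Ad(T^{-1})(J),\Ad(T^{-1})(J')\bigr).
\]
In other words, $L_{Tv}$ is obtained from $L_v$ by precomposing each argument with the orthogonal transformation $\Ad(T^{-1})|_{\g{J}}$. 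Hence the matrices of $L_{Tv}$ and $L_v$ in any $\langle\cdot,\cdot\rangle$-orthonormal basis of $\g{J}$ are orthogonally conjugate, so $L_{Tv}$ and $L_v$ have the same eigenvalues; by the definition of the quaternionic K\"ahler angle via Lemma~\ref{lemma:qKa}, the vectors $Tv$ and $v$ thus have the same quaternionic K\"ahler angle. Finally I would conclude by transitivity: since $V$ is protohomogeneous, $N^0_{\Sp(1)\Sp(n)}(V)$ acts transitively on the unit sphere of $V$, so any two unit vectors are of the form $v$ and $Tv$ for some such $T$, whence they share the same quaternionic K\"ahler angle and $V$ has constant quaternionic K\"ahler angle.

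The argument is essentially formal once the setup is right; the step I would treat as the crux is verifying the two equivariance properties, and in particular that $\Ad(T)$ acts on $\g{J}$ as a genuine orthogonal transformation, so that conjugating the symmetric form $L_v$ by it preserves the spectrum. This is exactly the point where the specific structure of the action $(q,A)\cdot v=Avq^{-1}$ enters: conjugating right multiplication by an imaginary quaternion $\xi$ produces right multiplication by $q\xi q^{-1}$, which fixes $\Image\H$ and acts there by a rotation, giving the required element of $\SO(3)$ on $\g{J}$.
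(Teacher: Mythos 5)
Your proof is correct and follows essentially the same route as the paper's: both rest on the two equivariance facts that $T$ commutes with $\pi_V$ and that conjugation by $T\in\Sp(1)\Sp(n)$ preserves $\g{J}$ (the paper phrases this as $TJ_i=J_i'T$ for a transported canonical basis, you as orthogonality of $\Ad(T)|_{\g{J}}$), and then compare $L_{Tv}$ with $L_v$ and invoke transitivity. The only cosmetic difference is that the paper checks $L_w$ is diagonal in the transported basis and cites the uniqueness claim of Lemma~\ref{lemma:qKa}, whereas you argue via invariance of the spectrum under orthogonal conjugation; these are the same argument.
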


\begin{proof}
Let $v\in V$ be a unit vector of quaternionic K\"ahler angle $(\varphi_1,\varphi_2,\varphi_3)$ with respect to $V$ and a canonical basis $\{J_1,J_2,J_3\}$ of $\g{J}$.
Thus, $\langle P_i v, P_j v\rangle=\cos^2(\varphi_i) \delta_{ij}$, for $i,j\in \{1,2,3\}$, where $\delta_{ij}$ stands for Kronecker delta. Let $w\in V$ be a unit vector. Since $V$ is protohomogeneous, there exists $T\in \Sp(1)\Sp(n)$ which leaves $V$ invariant and satisfies $Tv=w$. By the description of the action of $\Sp(1)\Sp(n)$ on $\H^n$, there exists a canonical basis $\{J_1',J_2',J_3'\}$ of $\g{J}$ such that $TJ_i=J_i'T$, for $i\in\{1,2,3\}$. Furthermore, since $T$ leaves $V$ invariant, we have that $T \pi_V=\pi_V T$. Hence, $TP_i=P_i'T$ for $i\in\{1,2,3\}$, where $P_i'=P_{J_i'}=\pi_V J_i'$. Finally,
\begin{align*}
\langle P_i'w, P_j'w\rangle&=\langle P_i'Tv, P_j'Tv\rangle=\langle TP_iv, TP_jv\rangle=\langle P_iv, P_jv\rangle=\cos^2(\varphi_i) \delta_{ij}.
\end{align*}
Since $w$ is arbitrary, by the last claim of Lemma~\ref{lemma:qKa} we get $\Phi(V)=(\varphi_1,\varphi_2,\varphi_3)$.
\end{proof}

\medskip

We now introduce a matrix map that will be very useful in what follows. Let $V$ be a real subspace of $\H^n$ of dimension $k$, and let $\{J_1,J_2,J_3\}$ be a canonical basis of  $\g{J}$. Then, we define  the \textit{K\"ahler angle map} of $V$ with respect to $\{J_1,J_2,J_3\}$ as the map $\Omega$ that sends each unit vector $v\in \mathbb{S}^{k-1}\subset V$ to the symmetric matrix $\Omega(v)$ of order $3$ whose $(i,j)$-entry is given~by
\begin{equation}\label{eq:kahler_angle_map}
\Omega({v}) _{ij}:=\langle P_i v, P_j v \rangle=L_v(J_i,J_j),
\end{equation}
where $P_i=P_{J_i}$, $i\in\{1,2,3\}$.
A straightforward but important observation is that $V$ has constant quaternionic K\"ahler angle if and only if the matrices $\Omega(v)$ have the same eigenvalues counted with multiplicities, for any $v\in \mathbb{S}^{k-1}$. In other words, $\Phi(V)=(\varphi_1,\varphi_2,\varphi_3)$ if and only if the eigenvalues of $\Omega(v)$ are $\cos^2(\varphi_i)$, $i\in\{1,2,3\}$, for all unit $v\in V$. This isospectrality property of the K\"ahler angle map will play a crucial role in this work.

\subsection{Known examples of subspaces with constant quaternionic K\"ahler angle}\label{subsec:known_examples}\hfill

We conclude this section by stating some known partial classifications and examples of subspaces $V$ with constant quaternionic K\"ahler angle in a quaternionic Euclidean space~$\H^n$.

In~\cite{BT07}, Berndt and Tamaru listed some triples that can arise as constant quaternionic K\"ahler angles $\Phi(V)$ of non-zero real subspaces $V$ of $\H^n$, and stated the classification of such particular types of subspaces. All the subspaces in this list are protohomogeneous~\cite{BB01,BT07}.
Such triples are the following:
\begin{enumerate}[{\rm (1)}]
	\item $\Phi(V)=(\pi/2,\pi/2,\pi/2)$. These are precisely the totally real subspaces of $\H^n$. Recall that a linear subspace $V\subset  \H^n$ is totally real if $JV\subset \H^n\ominus V$ for every $J\in\g{J}$. In this case $\dim_\R V\in\{1,2,\dots,n\}$.
	\item $\Phi(V)=(0,\pi/2,\pi/2)$. These are the totally complex subspaces, that is, the subspaces $V$ of $\H^n$ such that $J_1 V\subset V$ and $J V\subset \H^n\ominus V$ for some complex structure $J_1\in\g{J}$ and all $J\in \g{J}$ perpendicular to~$J_1$. In this case $\dim_\R V\in\{2,4,\dots,2n\}$.
	\item $\Phi(V)=(0,0,\pi/2)$. These subspaces are the $3$-dimensional subspaces of the form $\g{J}v=(\mathrm{Im}\, \H)v$ for some non-zero $v\in \H^n$.
	\item $\Phi(V)=(0,0,0)$. These are the quaternionic subspaces, that is, the subspaces $V\subset \H^n$ such that $JV\subset V$ for every $J\in \g{J}$. Hence, $\dim_\R V\in\{4,8,\dots,4n\}$.
	\item $\Phi(V)=(\varphi,\pi/2,\pi/2)$, $\varphi\in(0,\pi/2)$. Let $W$ be a totally complex subspace of $\H^n$, with $J_1 W\subset W$ for some complex structure $J_1\in\g{J}$. Then, a subspace $V$ of $\H^n$ satisfies $\Phi(V)=(\varphi,\pi/2,\pi/2)$ if and only if $V$ is a subspace of some $W$ as before with constant K\"ahler angle $\varphi\in(0,\pi/2)$ as a subspace of the complex vector space $(W,J_1)$. Thus $\dim_\R V\in\{2,4,\dots,2[n/2]\}$.
	\item $\Phi(V)=(0,\varphi,\varphi)$. Let $W$ be a totally complex subspace of $\H^n$ such that $J_2 W\subset W$ for some complex structure $J_2\in \g{J}$, and let $\tilde{V}$ be a real subspace of $(W,J_2)$ with constant K\"ahler angle $\varphi\in(0,\pi/2)$. Then, $V$ is a subspace of $\H^n$ with $\Phi(V)=(0,\varphi,\varphi)$ if and only if it is the complexification $V=J_1 \tilde{V} \oplus \tilde{V}$ of some $\tilde{V}\subset W$ as before with respect to some complex structure $J_1\in \g{J}$ orthogonal to $J_2$.
In this case $\dim_\R V\in\{4,8,\dots,4[n/2]\}$.
\end{enumerate}
We also recall, as observed in~\cite[pp.~3434-3435]{BT07}, that:
\begin{enumerate}[{\rm (i)}]
	\item for each $\ell\in\{1,\dots,n\}$ there exists, up to equivalence, exactly one real subspace $V$ of $\H^n$ with $\dim_\R V$ equal to $\ell$, $2\ell$ or $4\ell$, for each of the types (1), (2) or (4) above, respectively;
	\item there exists only one subspace $V$ of $\H^n$ of type (3), up to equivalence; and
	\item for each $\ell\in\{1,\dots,[n/2]\}$ and each $\varphi \in(0,\pi/2)$ there exists exactly one subspace $V$ of $\H^n$ with $\dim_\R V=2\ell$ of  type (5), and exactly one subspace $V$ of $\H^n$ with $\dim_\R V=4\ell$ of type (6), up to equivalence.
\end{enumerate}

Berndt and Tamaru conjectured in \cite{BT07} that these were all the possible subspaces with constant quaternionic K\"ahler angle, but the first and second authors found in  \cite{damekricci} new examples of  subspaces $V$ of dimension 4 such that $\Phi(V)=(\varphi_1,\varphi_2,\varphi_3)$ where $\cos(\varphi_1) + \cos(\varphi_2)<1+ \cos(\varphi_3)$. These are constructed as follows.
Let $0<\varphi_1\leq \varphi_2\leq\varphi_3\leq\pi/2$ with
$\cos(\varphi_1)+\cos(\varphi_2)<1+\cos(\varphi_3)$, and consider a
$4$-dimensional totally real subspace of~$\H^{n}$ and a basis of
unit vectors $\{e_0,e_1,e_2,e_3\}$ of it, where $\langle
e_0,e_i\rangle=0$, for $i\in\{1,2,3\}$, and
\begin{equation*}\label{eq:inner_products}
\langle e_{i}, e_{i+1}\rangle
=\frac{\cos(\varphi_{i+2})-\cos(\varphi_i)\cos(\varphi_{i+1})}
{\sin(\varphi_i)\sin(\varphi_{i+1})},\quad i\in\{1,2,3\}.
\end{equation*}
For the sake of simplicity let us define $\varphi_0=0$ and $J_0=\Id$.
Notice that $\langle J_j e_k, e_l\rangle=0$ for $j\in\{1,2,3\}$ and
$k$, $l\in\{0,1,2,3\}$, because $\spann_{\R}\{e_0,e_1,e_2,e_3\}$ is
a totally real subspace of~$\H^{n}$. Then we can define
\[
\xi_k = \cos(\varphi_k)J_k e_0+\sin(\varphi_k)J_k e_k, \quad k\in\{0,1,2,3\}.
\]
(Note that $\xi_0=e_0$.) We consider the subspace $V$
spanned by these four vectors, for which $\{\xi_0, \xi_1,
\xi_2,\xi_3\}$ is an orthonormal basis. Then,  $\Phi(V)=(\varphi_1,\varphi_2,\varphi_3)$. It was also observed in~\cite{damekricci} that one can take several copies of these $4$-dimensional subspaces to construct subspaces $V$ of $\H^n$ of dimension multiple of $4$ with $\Phi(V)=(\varphi_1,\varphi_2,\varphi_3)$, where $\cos(\varphi_1)+\cos(\varphi_2)<1+\cos(\varphi_3)$.
This fact will be proved carefully in Section 6 for a broader family of examples that we will provide.

At this point, we find interesting to remark that, unlike the six types of examples known to Berndt and Tamaru in~\cite{BT07}, and as we will see in Proposition~\ref{prop:ineqk=4},
we can prove that, for any positive integer $k$ multiple of $4$, there are triples $(\varphi_1,\varphi_2,\varphi_3)$ for which there are non-equivalent subspaces $V$ of $\H^n$ with $\Phi(V)=(\varphi_1,\varphi_2,\varphi_3)$ and ${\dim_\R V=k}$.

\section{Hairy ball method}\label{sec:hairy-ball}

In this section we use a topological argument to reduce the classification problem of subspaces $V$ with constant quaternionic K\"ahler angle in $\H^n$ to the study of subspaces with dimensions $3$ and multiples of $4$. The idea is to construct a distribution on the unit sphere of the subspace $V$ of $\H^n$, and then use a generalization of the hairy ball theorem to exclude several cases.

Let $V$ be a real subspace of $\H^n$ of real dimension $k$ with constant quaternionic K\"ahler angle $\Phi(V)=(\varphi_1,\varphi_2,\varphi_3)$.
Let $\mathbb{S}^{k-1}$ denote the unit sphere of $V$. For each $v\in \mathbb{S}^{k-1}$ and $J\in\g{J}$ we have $\langle P_J v,v\rangle=0$ and $P_Jv\in V$, and thus $P_Jv\in T_v \mathbb{S}^{k-1}$. For each $v\in \mathbb{S}^{k-1}$ consider the subspace of $T_v \mathbb{S}^{k-1}$ given by
\[
\Delta_v=\{P_Jv:J\in\g{J}\}.
\]
Since $V$ has constant quaternionic K\"ahler angle, the dimension of $\Delta_v$ is independent of $v\in \mathbb{S}^{k-1}$. Hence, $\Delta$ defines a smooth distribution on the sphere $\mathbb{S}^{k-1}$, and its rank coincides with the number of elements $i\in\{1,2,3\}$ such that $\varphi_i\neq \pi/2$.

Steenrod~\cite{Steenrod} computed the possible ranks of continuous distributions on spheres.
We summarize these results in the following statement~\cite[p.~144, Theorem 27.18]{Steenrod}.

\begin{theorem}\label{th:steenrod}
The sphere $\mathbb{S}^\ell$ does not admit a continuous distribution of rank~$r$ if $\ell$ is even and $1\leq r\leq \ell-1$, or if $\ell\equiv 1 \,\mathrm{(mod\; 4)}$ and $2\leq r\leq \ell-2$.
\end{theorem}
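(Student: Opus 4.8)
The plan is to reinterpret a rank-$r$ distribution as a rank-$r$ subbundle $\Delta$ of the tangent bundle $T\mathbb{S}^\ell$ and to obstruct its existence by characteristic classes. Fixing the round metric, such a subbundle is equivalent to an orthogonal splitting $T\mathbb{S}^\ell=\Delta\oplus\Delta^\perp$ with $\operatorname{rank}\Delta^\perp=\ell-r$, that is, to a reduction of the structure group of $T\mathbb{S}^\ell$ from $\SO(\ell)$ to $\SO(r)\times\SO(\ell-r)$, or equivalently to a section of the associated Grassmann bundle $G_r(T\mathbb{S}^\ell)\to\mathbb{S}^\ell$. Since $\ell\ge 2$, the base is simply connected and every bundle over it is orientable (as $w_1\in H^1(\mathbb{S}^\ell;\mathbb{Z}/2)=0$), so I may use integral Euler classes throughout.

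First, the even case: $\ell$ even, $1\le r\le\ell-1$. This is the classical generalized hairy-ball argument via the Euler class. By Whitney multiplicativity, $e(T\mathbb{S}^\ell)=e(\Delta)\smile e(\Delta^\perp)$, where $e(\Delta)\in H^r(\mathbb{S}^\ell;\mathbb{Z})$ and $e(\Delta^\perp)\in H^{\ell-r}(\mathbb{S}^\ell;\mathbb{Z})$. For $0<r<\ell$ both of these groups vanish, so $e(T\mathbb{S}^\ell)=0$; but $e(T\mathbb{S}^\ell)=\chi(\mathbb{S}^\ell)\,u=2u\ne0$ for a generator $u$ of $H^\ell(\mathbb{S}^\ell;\mathbb{Z})$ when $\ell$ is even. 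This contradiction rules out every intermediate rank and is completely self-contained.

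Next, the case $\ell\equiv 1\pmod 4$, $2\le r\le\ell-2$. Now $\chi(\mathbb{S}^\ell)=0$, the Euler class vanishes, and the primary Stiefel--Whitney obstructions are also useless, since $w(T\mathbb{S}^\ell)=1$ forces $w(\Delta)=w(\Delta^\perp)=1$. Instead I would run obstruction theory for a section of $G_r(T\mathbb{S}^\ell)$ directly. With the minimal CW structure of $\mathbb{S}^\ell$ (one $0$-cell and one $\ell$-cell) there is a single obstruction, lying in $H^\ell(\mathbb{S}^\ell;\pi_{\ell-1}(G_r(\mathbb{R}^\ell)))\cong\pi_{\ell-1}(G_r(\mathbb{R}^\ell))$, namely the image of the characteristic (clutching) element $\tau\in\pi_{\ell-1}(\SO(\ell))$ of $T\mathbb{S}^\ell$ under the projection $\SO(\ell)\to\SO(\ell)/(\SO(r)\times\SO(\ell-r))$; the distribution exists if and only if this image vanishes. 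The cleanest sub-case is $r=2$ (and, by passing to $\Delta^\perp$, $r=\ell-2$): a rank-$2$ oriented bundle over $\mathbb{S}^\ell$ with $\ell\ge 3$ is classified by $\pi_{\ell-1}(\SO(2))=\pi_{\ell-1}(\mathbb{S}^1)=0$, hence trivial, so a rank-$2$ distribution would yield two linearly independent vector fields on $\mathbb{S}^\ell$. This is impossible, because for $\ell+1\equiv 2\pmod 4$ the Radon--Hurwitz/Adams count gives only $\rho(\ell+1)-1=1$ independent vector field.

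The main obstacle is the remaining range $3\le r\le\ell-3$, where neither $\Delta$ nor $\Delta^\perp$ need be trivial, so one cannot reduce to counting vector fields. Here I would analyse the fibration $\SO(r)\times\SO(\ell-r)\to\SO(\ell)\to G_r(\mathbb{R}^\ell)$ and trace $\tau$ through the induced long exact homotopy sequence, computing $\pi_{\ell-1}$ of the real Grassmannian via the mod-$2$ cohomology of the relevant Stiefel manifolds together with the action of the Steenrod squares. The crux is to show that the image of $\tau$ is nonzero for every $r$ in this range precisely when $\ell\equiv 1\pmod 4$; this $2$-primary homotopy computation is exactly the delicate content carried out in Steenrod's cited treatment, and I expect it to be the hardest and least formal step of the argument.
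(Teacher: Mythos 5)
First, note what your attempt is being compared against: the paper does not prove this statement at all; it simply quotes it from Steenrod's book \cite[p.~144, Theorem~27.18]{Steenrod}. So the only real question is whether your argument is complete and correct on its own. Your even-dimensional case is: orientability of $\Delta$ and $\Delta^\perp$ is automatic over a simply connected base, Whitney multiplicativity gives $e(T\mathbb{S}^\ell)=e(\Delta)\smile e(\Delta^\perp)=0$ because $H^{r}(\mathbb{S}^\ell;\mathbb{Z})=H^{\ell-r}(\mathbb{S}^\ell;\mathbb{Z})=0$, and this contradicts $e(T\mathbb{S}^\ell)=2u$. Your treatment of $r=2$ and $r=\ell-2$ for $\ell\equiv 1\pmod 4$ is also correct: $\pi_{\ell-1}(\SO(2))=0$ for $\ell\geq 3$ forces the rank-two summand to be trivial, and two independent vector fields on $\mathbb{S}^{4k+1}$ are impossible (this is the Steenrod--Whitehead theorem; invoking Adams/Radon--Hurwitz is much more than you need, but of course suffices).

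The genuine gap is the range $3\leq r\leq\ell-3$, which you explicitly leave to ``the delicate content carried out in Steenrod's cited treatment.'' Since the paper itself only cites Steenrod, at this point your proposal proves nothing beyond what the paper already assumes; and this range is not idle generality, because Proposition~\ref{prop:hairyball}(ii) applies the theorem with $r=3$ on $\mathbb{S}^{k-1}$, $k\equiv 2\pmod 4$, which for $k\geq 10$ lies strictly between $2$ and $\ell-2$. Moreover, your diagnosis of what is missing (a $2$-primary computation of $\pi_{\ell-1}$ of real Grassmannians via Steenrod squares) overstates the difficulty: the entire range $2\leq r\leq\ell-2$ reduces to the no-$2$-field theorem you already use, by an elementary clutching argument. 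Let $\tau\in\pi_{\ell-1}(\SO(\ell))$ be the clutching element of $T\mathbb{S}^\ell$. A rank-$r$ distribution, with its (automatically orientable) orthogonal complement, is a reduction of the structure group to $\SO(r)\times\SO(\ell-r)$, i.e.\ $\tau=i_*(\alpha,\beta)$ for the block embedding $i\colon\SO(r)\times\SO(\ell-r)\hookrightarrow\SO(\ell)$. Replacing $\Delta$ by $\Delta^\perp$ if necessary, assume $2\leq r$ and $2r\leq\ell$. Let $q\colon\SO(\ell)\to V_{\ell,2}=\SO(\ell)/\SO(\ell-2)$ be projection onto the first two columns; by exactness of the homotopy sequence of this fibration, $\mathbb{S}^\ell$ admits two independent vector fields if and only if $q_*\tau=0$. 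Since $r\geq 2$, the composite $q\circ i$ sends $(A,B)$ to the first two columns of $\mathrm{diag}(A,B)$, i.e.\ it factors as $\SO(r)\times\SO(\ell-r)\to\SO(r)\to V_{r,2}\hookrightarrow V_{\ell,2}$, and the last inclusion is null-homotopic because $V_{r,2}$ is a CW-complex of dimension $2r-3\leq\ell-3$ while $V_{\ell,2}$ is $(\ell-3)$-connected. Hence $q_*\tau=(q\circ i)_*(\alpha,\beta)=0$, so $\mathbb{S}^\ell$ would carry a $2$-field, a contradiction. With this argument added (it subsumes your $r=2$, $r=\ell-2$ cases as well), your proposal becomes a complete, self-contained proof modulo only Steenrod--Whitehead; without it, it is not a proof.
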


Now we can state and prove the main result of this section.
\begin{proposition}
\label{prop:hairyball}
	Let $V$ be a real subspace of $\H^{n}$ with constant quaternionic K\"ahler angle and $\dim_\R V=k$. Then:
	\begin{enumerate}[{\rm (i)}]
		\item  If $k\geq 5$ is odd, then $V$ is a totally real subspace of $\H^{n}$, that is, it has constant quaternionic K\"ahler angle $(\pi/2,\pi/2,\pi/2)$.
		\item If $k\equiv 2 \,\mathrm{(mod\; 4)}$, then $V$ has constant quaternionic K\"ahler angle $(\varphi,\pi/2,\pi/2)$, for some $\varphi\in[0,\pi/2]$.
		\item If $k=3$, then $V$ has constant quaternionic K\"ahler angle $(\varphi, \varphi, \pi/2)$ for some $\varphi\in [0,\pi/2]$.
	\end{enumerate}	
\end{proposition}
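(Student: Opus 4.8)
The plan is to translate each case into a statement about the rank $r$ of the distribution $\Delta$ constructed just before the proposition, and then feed it into Steenrod's Theorem~\ref{th:steenrod}. Recall that $r$ equals the number of indices $i\in\{1,2,3\}$ with $\varphi_i\neq\pi/2$, and that, since $\Delta_v\subseteq T_v\mathbb{S}^{k-1}$ is the image of $\g{J}\cong\R^3\to V$, $J\mapsto P_Jv$, we always have the trivial bound $r\leq\min\{3,k-1\}$. Throughout I write $\ell=k-1$ for the dimension of the sphere $\mathbb{S}^{k-1}$.

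For (i), if $k\geq 5$ is odd then $\ell$ is even, so Theorem~\ref{th:steenrod} forbids every rank in the range $1\leq r\leq \ell-1=k-2$. As $r\leq 3\leq k-2$, any value $r\geq 1$ would be forbidden; hence $r=0$, that is, $\varphi_1=\varphi_2=\varphi_3=\pi/2$ and $V$ is totally real. For (ii) with $k\equiv 2\pmod 4$, I would first dispose of the trivial case $k=2$, where $r\leq k-1=1$ holds for free. If $k\geq 6$, then $\ell\equiv 1\pmod 4$ and Theorem~\ref{th:steenrod} forbids $2\leq r\leq \ell-2=k-3$; since $\{2,3\}\subseteq[2,k-3]$ and $r\leq 3$, this again yields $r\leq 1$. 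In either case at most one angle differs from $\pi/2$, and because $\varphi_1\leq\varphi_2\leq\varphi_3$ this forces $\varphi_2=\varphi_3=\pi/2$, so $\Phi(V)=(\varphi,\pi/2,\pi/2)$ with $\varphi=\varphi_1$.

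The delicate case is (iii). Here $\ell=2$ is even, but Steenrod only excludes $r=1$; together with $r\leq 2$ this leaves $r\in\{0,2\}$. The value $r=0$ gives $(\pi/2,\pi/2,\pi/2)=(\varphi,\varphi,\pi/2)$ with $\varphi=\pi/2$, and $r=2$ forces $\varphi_3=\pi/2$ with $\varphi_1\leq\varphi_2<\pi/2$. The hard part is to upgrade this to $\varphi_1=\varphi_2$, and I expect this step, rather than the hairy ball input, to be the real obstacle. It cannot be purely topological: every real line bundle over $\mathbb{S}^2$ is trivial, so splitting the eigenbundles of $\Omega$ carries no obstruction, and the equality must be extracted from quaternionic linear algebra special to dimension $3$.

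To carry this out I would fix a canonical basis $\{J_1,J_2,J_3\}$ and observe that each $A_a:=\pi_V J_a|_V$ is a skew-symmetric endomorphism of the $3$-dimensional space $V\cong\R^3$, hence of the form $A_a w=\mathbf{a}_a\times w$ for a unique $\mathbf{a}_a\in V$. Using $\langle\mathbf{a}\times v,\mathbf{b}\times v\rangle=\langle\mathbf{a},\mathbf{b}\rangle-\langle\mathbf{a},v\rangle\langle\mathbf{b},v\rangle$ for unit $v$, the K\"ahler angle map takes the compact form
\[
\Omega(v)=C(\Id-vv^{\mathsf T})C^{\mathsf T}=C\,\pi_{v^\perp}C^{\mathsf T},
\]
where $C$ is the $3\times 3$ matrix with rows $\mathbf{a}_a$. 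Since $\pi_{v^\perp}$ has rank $2$ this gives $\det\Omega(v)\equiv 0$, which already recovers $\varphi_3=\pi/2$; moreover the two remaining eigenvalues of $\Omega(v)$ coincide with those of the $2\times 2$ compression of $B:=C^{\mathsf T}C$ to the plane $v^\perp$. As $v$ sweeps the unit sphere, $v^\perp$ sweeps all $2$-planes, and constancy of the quaternionic K\"ahler angle says this eigenvalue pair never changes. A brief computation — testing the three coordinate planes of an eigenbasis of $B$ — shows that a symmetric $3\times 3$ matrix whose compression to every $2$-plane has a fixed spectrum must be a scalar multiple of $\Id$; thus $B=\lambda\,\Id$ and $\cos^2\varphi_1=\cos^2\varphi_2=\lambda$, i.e.\ $\varphi_1=\varphi_2$, completing~(iii).
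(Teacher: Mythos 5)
Your proposal is correct. For parts (i) and (ii) it is essentially the paper's own proof: the same bound $r\leq 3$ on the rank of $\Delta$ fed into Theorem~\ref{th:steenrod} (your separate treatment of $k=2$ is a small gain in precision, since for $\ell=1$ Steenrod's statement is vacuous and the trivial bound $r\leq\ell$ is what does the work there).

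Part (iii) is where you genuinely depart from the paper, and your alternative is valid. The paper stays topological: assuming $\varphi_1\neq\varphi_2$, the eigenvalue $\cos^2(\varphi_1)$ of $L_v$ is simple, so a corresponding eigenvector $J_1^v\in\g{J}$ can be chosen continuously in $v$, and then $v\mapsto P_1^v v$ is a nowhere-vanishing continuous tangent vector field on $\mathbb{S}^2$, contradicting Theorem~\ref{th:steenrod}. You instead encode the three skew-symmetric operators $\pi_V J_a|_V$ as cross products with vectors $\mathbf{a}_a\in V$, arrive at $\Omega(v)=C\pi_{v^\perp}C^{\mathsf T}$, and transfer the spectrum (nonzero eigenvalues of $XX^{\mathsf T}$ versus $X^{\mathsf T}X$, with $X=C\pi_{v^\perp}$) to the compressions of the fixed matrix $B=C^{\mathsf T}C$; isospectrality over all $v$ then forces $B=\lambda\Id$ by your eigenbasis test, hence $\varphi_1=\varphi_2$. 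All steps check out, including the multiset bookkeeping of the zero eigenvalue. What your route buys: it is purely linear-algebraic, avoids any continuity-of-eigenvectors discussion, and in fact renders Steenrod's theorem superfluous for $k=3$, since $\det\Omega(v)\equiv 0$ gives $\varphi_3=\pi/2$ and the compression argument gives $\varphi_1=\varphi_2$ with no case distinction on the rank of $\Delta$. What the paper's route buys: it runs uniformly with the distribution machinery already set up for (i) and (ii), and the same continuity-of-eigenspaces device is recycled later in the paper (e.g.\ in the proof of Proposition~\ref{proto3}).

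One side remark of yours is wrong, although it does not damage your proof: the assertion that the step $\varphi_1=\varphi_2$ ``cannot be purely topological.'' It can, and the paper's proof is exactly that. The triviality of real line bundles over $\mathbb{S}^2$, which you cite as evidence that there is no obstruction, is precisely what permits a global continuous choice of the simple eigenvector $J_1^v$; the obstruction does not live in the splitting of the eigenbundles of $\Omega$ inside the trivial bundle $\mathbb{S}^2\times\g{J}$, but in the tangency of the vectors $P_1^v v$ to the sphere: a splitting with $\varphi_1\neq\varphi_2$ and $\varphi_1,\varphi_2<\pi/2$ would yield a nowhere-zero tangent vector field (equivalently, a rank-one tangent distribution) on $\mathbb{S}^2$, which the hairy ball theorem forbids.
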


\begin{proof}
Let us consider the distribution $\Delta$ defined above in this section. Recall that, by construction, its rank is at most $3$.

Let $k\geq 5$ be odd. Then, Theorem~\ref{th:steenrod} implies that $\mathbb{S}^{k-1}$ does not admit a non-trivial continuous distribution. Thus, the rank of $\Delta$ is $0$. Hence, by definition of $\Delta$ we have $P_Jv=0$ for all $J\in\g{J}$ and $v\in \mathbb{S}^{k-1}$, which means that $\g{J}V$ is perpendicular to $V$. Therefore, $V$ is totally real. This proves (i).

Let now $k\equiv 2 \,\mathrm{(mod\; 4)}$. Theorem~\ref{th:steenrod} guarantees that the rank of $\Delta$ is $0$ or $1$.
If $\Delta$ has rank $1$, then for each $v\in \mathbb{S}^{k-1}$ there is, by definition of $\Delta$, a canonical basis $\{J_1^v,J_2^v,J_3^v\}$ of $\g{J}$ such that $P_1^vv\neq 0$ and $P_{2}^vv=P_3^vv=0$, where $P_i^v=P_{J_i^v}$, for $i\in\{1,2,3\}$. Hence, $v$ has quaternionic K\"ahler angle $(\varphi,\pi/2,\pi/2)$ with respect to $V$ and $\{J_1^v,J_2^v,J_3^v\}$, for some $\varphi\in[0,\pi/2)$. Therefore, $\Phi(V)=(\varphi,\pi/2,\pi/2)$, $\varphi\in[0,\pi/2)$. If $\Delta$ has rank $0$, then $V$ is totally real, as in the proof of  (i). Altogether, we have proved (ii).

Let $k=3$. Then Theorem~\ref{th:steenrod} implies that the rank of $\Delta$ is $0$ or $2$. If it is $0$, then $V$ is totally real. If the rank of $\Delta$ is $2$, then $\Phi(V)=(\varphi_1,\varphi_2,\pi/2)$, for some $\varphi_1,\varphi_2\neq\pi/2$. In this case, let us assume that $\varphi_1 \neq \varphi_2$. Then, for each $v\in \mathbb{S}^2\subset V$ there exist complex structures $J_1^v$ and $J_2^v$ in $\g{J}$, depending continuously on $v$, such that $v$ has K\"ahler angle $\varphi_i\in[0,\pi/2)$ with respect to $J_i^v$ and $V$, for $i\in\{1,2\}$. But then $v\mapsto P_1^vv$ would define a non-vanishing continuous vector field on $\mathbb{S}^2$, which contradicts Theorem~\ref{th:steenrod}. Hence, $\varphi_1=\varphi_2$, which proves (iii).
\end{proof}

In view of Proposition~\ref{prop:hairyball} and the previous partial classification results (\S\ref{subsec:known_examples}), the classification of real subspaces with constant quaternionic K\"ahler angle is reduced to two main cases: subspaces with dimension $k=3$, and subspaces with dimension $k$ multiple of $4$. The case $k=3$ (and hence $\Phi(V)=(\varphi,\varphi,\pi/2)$) will be addressed in~\S\ref{subsec:dim3} by a direct study. The other case is much more involved and, indeed, we will content ourselves with addressing the subcase $k=4$ and, for higher dimensions, restricting our attention to protohomogeneous subspaces. Thus, in Section~\ref{sec:factorization} we will reduce the study of protohomogeneous subspaces of dimension multiple of $4$ to the case of dimension $k=4$, and in \S\ref{subsec:dim4} we will obtain the classification of subspaces of dimension $k=4$ with constant quaternionic K\"ahler angle.

\section{Factorization of subspaces of dimension multiple of four}\label{sec:factorization}

In this section we prove that any protohomogeneous subspace of real dimension $k$ multiple of $4$ in $\H^n$ can be factorized as an $\H$-orthogonal direct sum of subspaces of dimension $4$ with the same constant quaternionic K\"ahler angle. The first step (Subsection~\ref{subsec:canonical}) will be to show, using a Lie group theoretical argument, that the canonical basis of $\g{J}$ provided by Lemma~\ref{lemma:qKa} is independent of the vector in the subspace $V$ of $\H^n$. Then, using this, one can induce a Clifford module structure on $V$, which allows us to conclude the factorization result by using the classification of Clifford modules by Atiyah, Bott and Shapiro~\cite{ABS} (Subsection~\ref{subsec:factorization}).

\subsection{Canonical quaternionic structure}\label{subsec:canonical}\hfill

Let $V$ be a real subspace of a quaternionic Euclidean space $\H^n$. Assume that $V$ is protohomogeneous. Equivalently, $H':=N^0_{\Sp(1)\Sp(n)}(V)$, the connected component of the identity of the normalizer of $V$ in $\Sp(1)\Sp(n)$, acts transitively on the unit sphere $\mathbb{S}^{k-1}$ of~$V$. In particular, $V$ has constant quaternionic K\"ahler angle by Lemma~\ref{lemma:protohomogeneous}.

Consider the subgroup $H''$ of all elements of $H'$ which act trivially on $V$,
\[
H''=Z_{\Sp(1)\Sp(n)}(V)=\left\{h\in H': hv=v,\text{ for all }v\in V\right\}.
\]
This is a closed normal subgroup of $H'$. 
Hence, $H:=H'/H''$ is a compact connected  Lie group. Moreover, the action of $H'$ on $V$ induces an action of $H$ on $V$, 
and the latter inherits the basic properties of the former (it is orthogonal and transitive on the unit sphere $\mathbb{S}^{k-1}$ of $V$), but now the $H$-action is effective. 

The compact connected Lie group $H$ acts effectively and transitively on the unit sphere $\mathbb{S}^{k-1}$ of $V$. Montgomery and Samelson \cite{MS43}, and Borel \cite{Bo50}, classified compact connected Lie groups acting effectively and transitively on spheres (see also \cite[p.~179]{Be87}). In particular (see~\cite[Theorem~I]{MS43}), we have that either $H$ is simple or $H=(H_1\times H_2)/N$, where $H_1$, $H_2$ are connected simple Lie groups and $N$ is a finite normal subgroup of $H_1\times H_2$; moreover, the subgroup of $H$ corresponding to $H_1$ still acts transitively on $\mathbb{S}^{k-1}$.

\begin{proposition}\label{prop:H_subgroup_spn}
Let $V$ be a protohomogeneous real subspace of $\H^n$ of dimension $k\geq 5$. Then, there exists a connected Lie subgroup $S$ of the $\Sp(n)$-factor of $\Sp(1)\Sp(n)$ that acts transitively on the unit sphere $\mathbb{S}^{k-1}$ of $V$. Moreover, the elements of $S$ commute with any complex structure $J\in \g{J}$.
\end{proposition}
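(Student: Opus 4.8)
The plan is to produce a connected \emph{simple} subgroup of the $\Sp(n)$-factor acting transitively on $\mathbb{S}^{k-1}$; once such a subgroup $S$ is found inside $\Sp(n)$, the final assertion is automatic, since every $A\in\Sp(n)$ acts by left multiplication and hence commutes with right multiplication by quaternions, i.e.\ with every $J\in\g{J}$. Writing the Lie algebra of $\Sp(1)\Sp(n)$ as the direct sum $\g{sp}(1)\oplus\g{sp}(n)$, where $\g{sp}(1)$ is canonically identified with $\g{J}$ and acts on $\g{J}\cong\R^3$ as $\g{so}(3)$ while $\g{sp}(n)$ acts trivially on $\g{J}$, I let $p\colon\g{h}'\to\g{sp}(1)$ denote the projection, which is a homomorphism of Lie algebras with $\Ker p=\g{h}'\cap\g{sp}(n)$. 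The goal then becomes to find a simple ideal of $\g{h}'$ that acts transitively on $\mathbb{S}^{k-1}$ and lies in $\Ker p$.

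First I would locate a simple transitive subgroup at the level of the effective quotient $H=H'/H''$. By the Montgomery--Samelson--Borel classification recalled above, either $H$ is simple, or $H=(H_1\times H_2)/N$ with $H_1$ simple acting transitively on $\mathbb{S}^{k-1}$; in both cases there is a connected simple subgroup $\bar{H}_1\subseteq H$ acting transitively, whose Lie algebra $\g{h}_1$ is a simple ideal of the compact, hence reductive, Lie algebra $\g{h}=\g{h}'/\g{h}''$. The next step is to lift $\g{h}_1$ back to $\g{h}'$. Since $\g{h}'$ is the Lie algebra of the compact group $H'$, it is reductive, $\g{h}'=\g{z}(\g{h}')\oplus\bigoplus_i\g{a}_i$ with $\g{a}_i$ simple, and the surjection $\g{h}'\to\g{h}$ carries the semisimple part of $\g{h}'$ onto the semisimple part of $\g{h}$. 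As $\g{h}_1$ is simple, some factor $\g{a}_{i_0}$ maps onto it, necessarily isomorphically. I set $\g{h}_1':=\g{a}_{i_0}$, a simple ideal of $\g{h}'$. Because the $\g{h}'$-action on $V$ factors through $\g{h}$ and $\g{a}_{i_0}\to\g{h}_1$ is an isomorphism, the connected subgroup $H_1'=\exp(\g{h}_1')\subseteq H'$ acts on $V$ exactly as $\bar{H}_1$ does, hence transitively on $\mathbb{S}^{k-1}$.

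Finally I would use the hypothesis $k\ge 5$ to show $\g{h}_1'\subseteq\Ker p$. The restriction $p|_{\g{h}_1'}\colon\g{h}_1'\to\g{sp}(1)$ is a homomorphism from a simple Lie algebra, so it is either zero or injective. If it were injective, then $\dim\g{h}_1'\le\dim\g{sp}(1)=3$; but a group acting transitively on $\mathbb{S}^{k-1}$ must have dimension at least $\dim\mathbb{S}^{k-1}=k-1\ge 4$, a contradiction. Hence $p|_{\g{h}_1'}=0$, i.e.\ $\g{h}_1'\subseteq\Ker p\subseteq\g{sp}(n)$, and $S:=H_1'$ is the desired connected subgroup of the $\Sp(n)$-factor. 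The main obstacle is the lifting step of the second paragraph: the classification applies only to the \emph{effective} action of $H$, so one must descend to $H=H'/H''$ and then recover a genuine subgroup of $\Sp(1)\Sp(n)$ that both acts transitively and can be tested against $p$, which is precisely what reductivity of the compact Lie algebra $\g{h}'$ provides. The inequality $k\ge 5$ enters only in the last step and is sharp: for $k=4$ the $\Sp(1)$-factor alone already acts transitively on $\mathbb{S}^3$ (for instance on a quaternionic line), so no such $S\subseteq\Sp(n)$ need exist.
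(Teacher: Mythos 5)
Your proof is correct and follows essentially the same route as the paper's: pass to the effective quotient $H=H'/H''$, invoke the Montgomery--Samelson/Borel classification to extract a simple transitive factor, use reductivity of the compact Lie algebra $\g{h}'$ to lift it to a simple ideal of $\g{h}'$ acting transitively on $\mathbb{S}^{k-1}$, and then kill its projection to $\g{sp}(1)$ by the dimension count $3<k-1$, the commutation with $\g{J}$ being automatic for subgroups of $\Sp(n)$. Only your closing sharpness remark is inaccurate: a quaternionic line \emph{does} admit a transitive connected subgroup of the $\Sp(n)$-factor (left multiplication by unit quaternions on that line), so it does not witness failure of the conclusion at $k=4$; the genuine counterexample is rather $k=3$ with $V=\g{J}v$, whose stabilizer in $\Sp(n)$ acts on $V$ only by $\pm\Id$.
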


\begin{proof}
Let $\g{h}'$ and $\g{h}''$ denote the Lie algebras of $H'$ and $H''$, respectively. Since  $\g{h}'$ is compact, and hence reductive, the ideal $\g{h}''$ of $\g{h}'$ admits a complementary ideal $\g{h}$ of $\g{h}'$ such that $\g{h}'= \g{h}\oplus \g{h}''$ and $\g{h}\simeq \g{h}'/\g{h}''$. Note that the Lie algebra of $H=H'/H''$ is isomorphic to $\g{h}$. If $\widehat{H}$ denotes the connected subgroup of $H'$ with Lie algebra $\g{h}$, then $H'=\widehat{H}\cdot H''$ and hence $H=H'/H''\cong\widehat{H}/(\widehat{H}\cap H'')$ is a finite quotient of $\widehat{H}$.

If $H$ is simple, put $\g{s}:=\g{h}$. If $H$ is not simple, put $\g{s}:=\g{h}_1$, where $\g{h}_1$ is the ideal of $\g{h}$ whose associated connected Lie subgroup of $H$ still acts transitively on $\mathbb{S}^{k-1}$. Note that, in any case, the connected Lie subgroup $S$ of $\widehat{H}\subset H'\subset \Sp(1)\Sp(n)$ with Lie algebra $\g{s}$ acts transitively on the unit sphere $\mathbb{S}^{k-1}$ of $V$.

Recall that $\g{h}'$ is a Lie subalgebra of the direct sum Lie algebra $\g{sp}(1)\oplus\g{sp}(n)$. Consider $\pi_{\g{sp}(1)}\colon \g{sp}(1)\oplus\g{sp}(n)\to\g{sp}(1)$ the projection map onto the first factor, and $\Psi=\pi_{\g{sp}(1)}\rvert_{\g{s}}\colon \g{s}\to\g{sp}(1)$ its restriction to $\g{s}$, which is a Lie algebra homomorphism.

Since $\Ker\Psi$ is an ideal of $\g{s}$ and $\g{s}$ is simple, we have $\Ker\Psi=0$ or $\Ker\Psi=\g{s}$.  If $\Ker\Psi=0$, then $\g{s}$ is isomorphic to a subalgebra of $\g{sp}(1)$;
but $\dim\g{sp}(1)=3$, so $S$ cannot act transitively on $\mathbb{S}^{k-1}$, $k\geq 5$.
Hence, $\Ker\Psi=\g{s}$, and thus, $\Image\Psi=0$, that is,
$\g{s}$ is contained in the $\g{sp}(n)$-factor of the Lie algebra of $\Sp(1)\Sp(n)$.
This proves the first part of the claim.

The connected subgroup $S$ of $\Sp(n)\subset \Sp(1)\Sp(n)$ with Lie algebra $\g{s}$, which acts transitively on the unit sphere of $V$, commutes with the elements of the $\Sp(1)$-factor of $\Sp(1)\Sp(n)$. Since the quaternionic structure $\g{J}$ of $\H^n$ is induced precisely by the action of the $\Sp(1)$-factor on $\H^n$, we obtain that the elements of $S$ commute with any $J\in \g{J}$.
\end{proof}

As a consequence, we have

\begin{corollary}\label{prop:in_spn}
Let $V\subset \H^n$ be a protohomogeneous real subspace of dimension $k\geq5$ with constant quaternionic K\"ahler angle $\Phi(V)=(\varphi_1,\varphi_2,\varphi_3)$. Then, there exists a canonical basis $\{J_1,J_2,J_3\}$ of $\mathfrak{J}$ such that the K\"ahler angle of any unit vector $v\in V$ with respect to $J_i$ and $V$ is $\varphi_i$, for each $i\in\{1,2,3\}$.
\end{corollary}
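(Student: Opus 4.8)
The plan is to leverage Proposition~\ref{prop:H_subgroup_spn} in order to transport a single diagonalizing basis across the entire unit sphere of $V$. First I would invoke that proposition to obtain a connected Lie subgroup $S$ of the $\Sp(n)$-factor of $\Sp(1)\Sp(n)$ that acts transitively on $\mathbb{S}^{k-1}\subset V$ and whose elements commute with every $J\in\g{J}$. Since $S$ is contained in $N^0_{\Sp(1)\Sp(n)}(V)$, every $g\in S$ leaves $V$ invariant; being orthogonal, such a $g$ also preserves the orthogonal complement $\H^n\ominus V$, and therefore commutes with the orthogonal projection $\pi_V$.

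Next I would fix one unit vector $v_0\in V$ and apply Lemma~\ref{lemma:qKa} to produce a canonical basis $\{J_1,J_2,J_3\}$ of $\g{J}$ for which $L_{v_0}$ is diagonal with eigenvalues $\cos^2(\varphi_i)$, $i\in\{1,2,3\}$. I claim this same basis witnesses the quaternionic K\"ahler angle at every unit vector. The key step is the commutation relation $gP_i=P_ig$ for all $g\in S$, where $P_i=\pi_V J_i$: indeed
\[
gP_i=g\pi_V J_i=\pi_V gJ_i=\pi_V J_i g=P_i g,
\]
using in the second equality that $g$ commutes with $\pi_V$ and in the third that $g$ commutes with $J_i$, as noted above.

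With this relation in hand, I would take an arbitrary unit vector $v\in V$ and use transitivity of $S$ to choose $g\in S$ with $gv_0=v$. Since $g$ is orthogonal, the commutation relation gives
\[
\langle P_i v,P_j v\rangle=\langle P_i gv_0,P_j gv_0\rangle=\langle gP_i v_0,gP_j v_0\rangle=\langle P_i v_0,P_j v_0\rangle=\cos^2(\varphi_i)\,\delta_{ij}.
\]
Hence $L_v$ is diagonalized by the \emph{same} basis $\{J_1,J_2,J_3\}$ with eigenvalues $\cos^2(\varphi_i)$, so by Lemma~\ref{lemma:qKa} the K\"ahler angle of $v$ with respect to each $J_i$ and $V$ is $\varphi_i$, and this holds independently of $v$, which is exactly the assertion.

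Given Proposition~\ref{prop:H_subgroup_spn}, I do not expect a serious obstacle to remain: the genuine difficulty has been concentrated in producing a transitive subgroup $S$ lying inside the $\Sp(n)$-factor rather than merely in $\Sp(1)\Sp(n)$. The only point demanding care is the \emph{double} commutation of $g\in S$ with both $\pi_V$ and $\g{J}$; the former follows from the invariance of $V$ under the orthogonal transformation $g$, while the latter is precisely the content of the final assertion of Proposition~\ref{prop:H_subgroup_spn}. It is worth stressing that the hypothesis $k\geq 5$ enters only through Proposition~\ref{prop:H_subgroup_spn}, so this corollary is an essentially formal consequence of it.
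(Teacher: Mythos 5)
Your proof is correct and follows essentially the same route as the paper: both arguments invoke Proposition~\ref{prop:H_subgroup_spn} to obtain a subgroup $S$ acting transitively on the unit sphere of $V$, commuting with every $J\in\g{J}$ and (since it preserves $V$) with $\pi_V$, and then transport the data from one vector to any other via the orthogonality of $g\in S$. The only cosmetic difference is that the paper states the conclusion as the independence of the whole bilinear form $L_v$ from $v$ (i.e.\ $L_w(J,J')=L_v(J,J')$ for all $J,J'\in\g{J}$), whereas you verify the matrix entries in the fixed diagonalizing basis $\{J_1,J_2,J_3\}$; the underlying computation is identical.
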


\begin{proof}
It suffices to show that the bilinear form $L_v$ given in Lemma~\ref{lemma:qKa} is independent of $v\in \mathbb{S}^{k-1}$. Indeed, given $v,w\in \mathbb{S}^{k-1}$, there exists $T\in S$ such that $Tv=w$. Since $T$ commutes with all $J\in\g{J}$ and preserves $V$, we have
\begin{align*}
L_w(J,J')&=\langle P_J w, P_{J'}w\rangle =
\langle \pi_V J Tv, \pi_V J'Tv\rangle =
\langle \pi_V T J v, \pi_V T J'v\rangle
\\
&=\langle T \pi_V J v,  T\pi_V J'v\rangle  =
\langle T P_J v, T P_{J'}v\rangle =\langle P_J v, P_{J'}v\rangle=L_v(J,J'),
\end{align*}
for all $J$, $J'\in\g{J}$.
\end{proof}

\subsection{Factorization Lemma}\label{subsec:factorization}\hfill

Let $V$ be a real subspace of $\H^n$ of constant quaternionic K\"ahler angle $(\varphi_1,\varphi_2,\varphi_3)$ with $\varphi_2\neq \pi/2$. Assume that there exists a canonical basis $\{J_1,J_2,J_3\}$ of $\mathfrak{J}$ such that the K\"ahler angle of any non-zero vector $v\in  V$ with respect to $J_i$ and $V$ is $\varphi_i$, for $i\in\{1,2,3\}$.  Note that, by Corollary~\ref{prop:in_spn}, if $V$ is protohomogeneous of dimension at least $5$, then the previous assumption holds. In view of Proposition~\ref{prop:hairyball} (and leaving the case $k=3$ for later), we will assume that $\dim_\R V=4l$ with $l\in \mathbb{N}$.

Let us regard $\mathbb{H}^n$ as a complex vector space $\mathbb{C}^{2n}$ with respect to the complex structure $J_i$. By \cite[p.~1191]{mathz}, we have that $\bar{P}_{i}:=P_i/\cos(\varphi_i)=\pi_VJ_i/\cos(\varphi_i)$ leaves $V$ invariant and defines an orthogonal complex structure in $V$, for each $i\in\{1,2\}$, and also for $i=3$ if and only if $\varphi_3\neq\pi/2$. Furthermore, we can easily check that $\bar{P}_{i}\bar{P}_{j}=-\bar{P}_{j}\bar{P}_{i}$, for $i\neq j$.
Indeed, if $v,w \in V$, then Lemma~\ref{lemma:qKa} yields
\begin{align*}
0&=\langle\bar{P}_{i}(v+w),\bar{P}_{j}(v+w)\rangle
=\langle\bar{P}_{i}v,\bar{P}_{j}w \rangle + \langle\bar{P}_{j}v,\bar{P}_{i}w  \rangle=-\langle\bar{P}_{j}\bar{P}_{i}v,w \rangle - \langle\bar{P}_{i}\bar{P}_{j}v,w\rangle.
\end{align*}
Hence, $V$ has a module structure over the Clifford algebra $\mathsf{Cl}(3)$ if $\varphi_3\neq \pi/2$, or over $\mathsf{Cl}(2)$ if $\varphi_3=\pi/2$. It is well known that there are exactly two inequivalent irreducible Clifford modules over $\mathsf{Cl}(3)$, both of dimension $4$ (we will denote them by $V^0$ and $V^1$), whereas there is exactly one irreducible $\mathsf{Cl}(2)$-module up to equivalence, again of dimension $4$ (we will denote it by $V^0$). Moreover, Clifford modules are semisimple. This implies that, if $\varphi_3\neq\pi/2$, we can decompose $V$ into a direct sum of irreducible $\mathsf{Cl}(3)$-modules as follows
\[
V=\left(\bigoplus^{l_0} V^0\right) \oplus \left(\bigoplus^{l_1} V^1\right),
\]
where $l_0 + l_1=l$, whereas if $\varphi_3=\pi/2$ the $\mathsf{Cl}(2)$-module $V$ can be decomposed as
\[
V=\bigoplus_{}^l V^0.
\]
The above decompositions can be assumed to be orthogonal because the complex structures $\bar{P}_i$ are orthogonal.
This also implies that two different summands are $\H$-orthogonal:
if $v,w\in V$ belong to two different summands, $\langle J_k v,w\rangle=\langle P_k v,w\rangle=0$ by the $\bar{P}_k$-invariance.
Finally, since $\bar{P}_{i}$ leaves each factor $V^r$ ($r\in\{0,1\}$) invariant, we deduce that each $V^r$ has constant quaternionic K\"ahler angle $(\varphi_1,\varphi_2,\varphi_3)$. This leads us to state the following:

\begin{lemma}\label{lemma:factorization}
	Let $V$ be a real subspace of $\mathbb{H}^n$ of dimension $4l$, with $l\in \mathbb{N}$, and constant quaternionic K\"ahler angle  $(\varphi_1,\varphi_2,\varphi_3)$. Assume that there exists a canonical basis $\{J_1,J_2,J_3\}$ of $\g{J}$ such that the K\"ahler angle of any non-zero $v\in V$ with respect to $J_i$ and $V$ is $\varphi_i$, for each $i\in\{1,2,3\}$.  Then, there is an $\H$-orthogonal decomposition
	\[
    V = \bigoplus_{r=1}^l V_r,
    \]
	where each $V_r$ has dimension $4$ and  $\Phi(V_r)=(\varphi_1,\varphi_2,\varphi_3)$ as a subspace of $\mathbb{H}^n$.
	
	Conversely,	let $V$ be a real subspace of $\H^n$ given by an $\H$-orthogonal direct sum $V:=\bigoplus_{r=1}^l V_r$, where each $V_r$ has dimension $4$, and $\Phi(V_r)=(\varphi_1,\varphi_2,\varphi_3)$. Let $\{J_1,J_2,J_3\}$  be a canonical structure of $\g{J}$ such that every non-zero vector in $V_r$ has K\"ahler angle $\varphi_i$ with respect to $J_i$ and $V_r$, for each $i\in\{1,2,3\}$ and each $r\in\{1,\dots, l\}$. Then, $\Phi(V)=(\varphi_1,\varphi_2,\varphi_3)$.
\end{lemma}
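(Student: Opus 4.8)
The plan is to handle the two implications separately. The direct implication has essentially been established in the discussion immediately preceding the statement, so I would only need to assemble it: under the standing hypothesis $\varphi_2\neq\pi/2$, the operators $\bar P_i=\pi_V J_i/\cos(\varphi_i)$ are pairwise anticommuting orthogonal complex structures on $V$, turning $V$ into a module over $\mathsf{Cl}(3)$ (if $\varphi_3\neq\pi/2$) or over $\mathsf{Cl}(2)$ (if $\varphi_3=\pi/2$). Semisimplicity of Clifford modules decomposes $V$ into irreducible submodules, all of real dimension $4$, and their $\bar P_i$-invariance forces them to be mutually $\H$-orthogonal and each to carry constant quaternionic K\"ahler angle $(\varphi_1,\varphi_2,\varphi_3)$. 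Relabelling the $l_0$ copies of $V^0$ and the $l_1$ copies of $V^1$ as $V_1,\dots,V_l$ (with $l=l_0+l_1$, or $l$ copies of $V^0$ in the $\mathsf{Cl}(2)$ case) yields the claimed factorization.

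For the converse I would fix a unit vector $v\in V$ and decompose it as $v=\sum_{r=1}^{l}v_r$ with $v_r\in V_r$. The first step exploits $\H$-orthogonality: for $r\neq r'$ and any $J\in\g{J}$, the vector $Jv_r$ lies in $\H V_r$, which is orthogonal to $V_{r'}$. Consequently the projection $\pi_V=\sum_r\pi_{V_r}$ sends $J_i v_r$ into $V_r$ alone, so that
\[
P_i v=\sum_{r=1}^{l}P_i^{(r)}v_r,\qquad P_i^{(r)}:=\pi_{V_r}J_i .
\]
Using once more that $V_r\perp V_{r'}$ for $r\neq r'$, all cross terms drop out and the K\"ahler angle map of $V$ in the basis $\{J_1,J_2,J_3\}$ becomes
\[
\Omega(v)_{ij}=\langle P_i v,P_j v\rangle=\sum_{r=1}^{l}\langle P_i^{(r)}v_r,P_j^{(r)}v_r\rangle .
\]

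The step I expect to be the main obstacle is the vanishing of the off-diagonal entries within each block, since the hypothesis literally provides only the diagonal values. For fixed $r$, the symmetric $3\times 3$ matrix $M_r$ with entries $\langle P_i^{(r)}v_r,P_j^{(r)}v_r\rangle$ has diagonal $\cos^2(\varphi_i)\,|v_r|^2$ by the assumption on K\"ahler angles, while $\Phi(V_r)=(\varphi_1,\varphi_2,\varphi_3)$ forces its eigenvalues to be exactly $\cos^2(\varphi_i)\,|v_r|^2$. Since its diagonal coincides with its spectrum as a multiset, one has $\sum_i (M_r)_{ii}^2=\tr(M_r^2)=\sum_{i,j}(M_r)_{ij}^2$, whence every off-diagonal entry vanishes; thus $\langle P_i^{(r)}v_r,P_j^{(r)}v_r\rangle=\cos^2(\varphi_i)\,|v_r|^2\,\delta_{ij}$.

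Summing over $r$ and using $\sum_r|v_r|^2=|v|^2=1$ then gives $\Omega(v)_{ij}=\cos^2(\varphi_i)\,\delta_{ij}$ for every unit $v\in V$. By the isospectrality criterion recorded just after \eqref{eq:kahler_angle_map}, this is precisely the statement $\Phi(V)=(\varphi_1,\varphi_2,\varphi_3)$, which closes the converse. Apart from the short matrix argument above, everything reduces to bookkeeping with the $\H$-orthogonal splitting.
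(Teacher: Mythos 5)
Your converse argument is correct and follows the same route as the paper: use the $\H$-orthogonality of the factors to show $\pi_V$ maps $\g{J}V_r$ into $V_r$, so that $\Omega(v)$ splits as a sum of blocks $M_r$ over the factors, and then identify each block. In fact your treatment of the blocks is more careful than the paper's own proof: the paper simply asserts $\langle P_i v_r, P_j v_r\rangle=\cos^2(\varphi_i)\delta_{ij}\lVert v_r\rVert^2$, i.e.\ that the off-diagonal entries vanish, whereas the stated hypothesis literally controls only the diagonal entries. Your observation that a symmetric matrix whose diagonal coincides with its spectrum as a multiset must be diagonal, via $\sum_i (M_r)_{ii}^2=\tr(M_r^2)=\sum_{i,j}(M_r)_{ij}^2$, closes this point cleanly; the paper leaves it implicit.

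The gap is in the direct implication. You prove it \emph{under the standing hypothesis} $\varphi_2\neq\pi/2$, but the lemma has no such hypothesis, and the Clifford-module argument genuinely fails when $\varphi_2=\pi/2$: then $\varphi_3=\pi/2$ as well, the operators $\bar{P}_2,\bar{P}_3$ are undefined (indeed $P_2=P_3=0$ on $V$), and at most $\bar{P}_1$ survives. A single orthogonal complex structure makes $V$ only a $\mathsf{Cl}(1)$-module, whose irreducible modules are $2$-dimensional, so semisimplicity alone does not produce an $\H$-orthogonal decomposition into $4$-dimensional factors with the same quaternionic K\"ahler angle. This case is not vacuous: it contains the totally real subspaces of dimension $4l$ and all subspaces with $\Phi(V)=(\varphi,\pi/2,\pi/2)$. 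The paper disposes of it separately, by invoking the known classification of subspaces with constant quaternionic K\"ahler angle $(\varphi,\pi/2,\pi/2)$ (cf.~\S\ref{subsec:known_examples} and~\cite{BB01}), from which the required factorization can be read off; your proof needs this step (or some substitute for it) to cover the full statement.
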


\begin{proof}
	The first assertion has been proved above under the assumption $\varphi_2\neq\pi/2$. If $\varphi_2=\pi/2$, the first claim follows from the classification of subspaces $V$ with $\Phi(V)=(\varphi,\pi/2,\pi/2)$, $\varphi\in[0,\pi/2]$ (cf.~\S\ref{subsec:known_examples} and~\cite[pp.~230-232]{BB01}).
	
	In order to prove the converse, we first note that $\pi_V(\H V_r)=V_r$, for each $r\in\{1,\dots,l\}$. Indeed, for every $v\in V_r$ and $w\in V_s$, $r\neq s$, $\langle \pi_V J_i v, w\rangle=\langle J_i v,w\rangle=0$, where in the last equality we have used $\H V_r \perp\H V_s$. Hence, $\pi_V \g{J} (V_r)\subset V_r$, and since $\pi_V(V_r)=V_r$, we deduce $\pi_V(\H V_r)=V_r$.
	
	Now let $v=\sum_{r=1}^l v_r\in V$, with $v_r\in V_r$ for each $r\in\{1,\dots, l\}$. Denoting as usual $P_i=\pi_V J_i$, for each $i\in\{1,2,3\}$, we have
\begin{align*}
L_v(J_i,J_j)
&{}=\langle P_i v, P_j v\rangle
=\sum_{r,s=1}^l\langle \pi_V J_iv_r,\pi_V J_jv_s\rangle
=\sum_{r=1}^l\langle \pi_V J_i v_r,\pi_V J_j v_r\rangle
\\
&{}=\sum_{r=1}^l\langle P_i v_r,P_j v_r\rangle
=\sum_{r=1}^l\cos^2(\varphi_i)\delta_{ij} \lVert v_r\rVert^2
=\cos^2(\varphi_i)\delta_{ij}\lVert v\rVert^2,
\end{align*}
where in the third equality we have used $\pi_V(\H V_r)=V_r$ and $V_r\perp V_s$ for all $r,s\in\{1,\dots, l\}$, and in the fifth one we have used that the quaternionic K\"{a}hler angle of $v_r$ with respect to $V_r$ and $\{J_1,J_2,J_3\}$ is $(\varphi_1,\varphi_2,\varphi_3)$. 
Since $v\in V$ is arbitrary, by Lemma~\ref{lemma:qKa} we conclude that $\Phi(V)=(\varphi_1,\varphi_2,\varphi_3)$.
\end{proof}

\section{Low dimensional subspaces with constant quaternionic K\"ahler angle}\label{sec:low-dim}

As a consequence of Proposition~\ref{prop:hairyball}, we only have to study subspaces  of dimensions $3$ and multiples of $4$.
The latter can be reduced to studying subspaces of dimension $4$ by virtue of Corollary~\ref{prop:in_spn} and Lemma \ref{lemma:factorization}. We will devote this section to the classification of (not necessarily protohomogeneous)  real subspaces of dimensions $k\in\{3,4\}$ with constant quaternionic K\"ahler angle. The main tool that we will use in this section is the isospectrality of the K\"ahler angle map $\Omega$ introduced in~\eqref{eq:kahler_angle_map}.
We start with a lemma that provides an appropriate basis of the subspace.

\begin{lemma}\label{lemma:basisk34}
Let $V$ be a real subspace of $\H^n$ of dimension $k\in\{3,4\}$ with $\Phi(V)=(\varphi_1,\varphi_2,\varphi_3)$. Let $e_0\in V$ be a unit vector.
Then, there exists a canonical basis $\{J_1,J_2,J_3\}$ of $\g{J}$ and vectors $e_i\in \H^n\ominus \H e_0$, $i\in\{1,\dots,k-1\}$, such that
\begin{equation}\label{eq:basisk34}
\cos(\varphi_i)J_i e_0 + \sin(\varphi_i)J_i e_i,\qquad i\in\{0,\dots, k-1\},
\end{equation}
constitute an $\R$-orthonormal basis of~$V$, where we put $J_0:=\Id$ and $\varphi_0=0$.

Moreover, for each $i\in\{0,\dots,k-1\}$ with $\varphi_i\neq \pi/2$, we have 
\[
\bar{P}_ie_0=\cos(\varphi_i)J_i e_0 + \sin(\varphi_i)J_i e_i,
\]
where $\bar{P}_i=P_i/\cos(\varphi_i)=\pi_V J_i/\cos(\varphi_i)$.

Finally, if $\varphi_i=0$ we take $e_i=0$, whereas if $\varphi_i>0$, then $e_i$ is a unit vector.
\end{lemma}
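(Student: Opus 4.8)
The plan is to build the basis explicitly, starting from the canonical basis attached to $e_0$ and splitting the indices according to whether the corresponding K\"ahler angle equals $\pi/2$. First I would apply Lemma~\ref{lemma:qKa} to the unit vector $e_0$ to obtain a canonical basis $\{J_1,J_2,J_3\}$ of $\g{J}$ with $\langle P_i e_0, P_j e_0\rangle=\cos^2(\varphi_i)\delta_{ij}$, where $P_i=\pi_V J_i$. Recording that each $J_i$ is orthogonal with $J_i^2=-\Id$, hence skew-symmetric (so $\langle J_i e_0,e_0\rangle=0$ and $\{e_0,J_1 e_0,J_2 e_0,J_3 e_0\}$ is orthonormal, spanning $\H e_0$), and that $\langle \pi_V x,x\rangle=\lVert\pi_V x\rVert^2$, I would set $f_0:=e_0$ and, for every index $i$ with $\varphi_i\neq\pi/2$, define $f_i:=\bar P_i e_0=P_i e_0/\cos(\varphi_i)$. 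These vectors are unit and mutually orthogonal: $\langle f_0,f_i\rangle=\langle e_0,J_i e_0\rangle/\cos\varphi_i=0$ and $\langle f_i,f_j\rangle=\langle P_i e_0,P_j e_0\rangle/(\cos\varphi_i\cos\varphi_j)=0$ for $i\neq j$ by Lemma~\ref{lemma:qKa}.

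Next I would extract $e_i$ for these non-$\pi/2$ indices. Since $\langle\bar P_i e_0,J_i e_0\rangle=\langle P_i e_0,J_i e_0\rangle/\cos\varphi_i=\lVert P_i e_0\rVert^2/\cos\varphi_i=\cos\varphi_i$, the orthogonal projection of the unit vector $\bar P_i e_0$ onto $\R J_i e_0$ is exactly $\cos(\varphi_i)J_i e_0$, so by Pythagoras the remaining component $\bar P_i e_0-\cos(\varphi_i)J_i e_0$ has norm $\sin\varphi_i$. If $\varphi_i=0$ this component vanishes and I set $e_i:=0$, giving $f_i=J_i e_0$; if $0<\varphi_i<\pi/2$ I write it as $\sin(\varphi_i)u_i$ for a unit vector $u_i$ and set $e_i:=-J_i u_i$, so that $J_i e_i=u_i$ and thus $f_i=\cos(\varphi_i)J_i e_0+\sin(\varphi_i)J_i e_i$, which is the desired form and simultaneously yields $\bar P_i e_0=f_i$ (the \emph{moreover} clause). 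That $e_i$ is a unit vector is immediate since $J_i$ is orthogonal, and $e_i\in\H^n\ominus\H e_0$ follows by checking $\langle e_i,J_j e_0\rangle=\langle u_i,J_i J_j e_0\rangle$ and reducing, via the multiplication table of $\{J_1,J_2,J_3\}$, to inner products $\langle\bar P_i e_0,J_l e_0\rangle=\langle P_i e_0,P_l e_0\rangle/\cos\varphi_i$, each of which vanishes by the orthogonality in Lemma~\ref{lemma:qKa}.

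The step I expect to be the real obstacle is completing the basis along the directions with $\varphi_i=\pi/2$, where $\bar P_i e_0$ is undefined because $J_i e_0\perp V$. The vectors $f_0$ together with the $f_i$ already constructed span a subspace of $V$ of dimension $k-m$, where $m$ is the number of indices $i\in\{1,\dots,k-1\}$ with $\varphi_i=\pi/2$; setting $W:=V\ominus\spann\{f_j:\varphi_j\neq\pi/2\}$ we get $\dim W=m$. The key claim is that for \emph{every} $w\in W$ and every index $i$ with $\varphi_i=\pi/2$, the pullback $-J_i w$ lies in $\H^n\ominus\H e_0$: indeed $\langle -J_i w,e_0\rangle=\langle w,J_i e_0\rangle=\langle w,P_i e_0\rangle=0$ because $P_i e_0=0$, while $\langle -J_i w,J_j e_0\rangle=\langle w,J_i J_j e_0\rangle$ vanishes either because $j=i$ gives $\langle w,-e_0\rangle=0$ (as $w\perp f_0$), or because $j\neq i$ gives $\pm\langle w,P_l e_0\rangle$ with $\{i,j,l\}=\{1,2,3\}$, which is $0$ when $\varphi_l=\pi/2$ (then $P_l e_0=0$) and $0$ when $\varphi_l\neq\pi/2$ (then $P_l e_0$ is a multiple of $f_l$ and $w\perp f_l$). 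Granting this, I would choose any orthonormal basis $\{w^{(1)},\dots,w^{(m)}\}$ of $W$, assign it to the $\pi/2$-indices $i_1<\dots<i_m$ by $f_{i_s}:=w^{(s)}$ and $e_{i_s}:=-J_{i_s}w^{(s)}$, and note $J_{i_s}e_{i_s}=w^{(s)}=\cos(\varphi_{i_s})J_{i_s}e_0+\sin(\varphi_{i_s})J_{i_s}e_{i_s}$ since $\cos(\pi/2)=0$ and $\sin(\pi/2)=1$; each $e_{i_s}$ is a unit vector in $\H^n\ominus\H e_0$ by the claim. Collecting $f_0$, the non-$\pi/2$ vectors, and the $w^{(s)}$, these $k$ vectors are orthonormal (the last block spans $W$, orthogonal to the rest) and hence form the required orthonormal basis of the $k$-dimensional space $V$, with $e_i=0$ precisely when $\varphi_i=0$ and $e_i$ a unit vector otherwise.
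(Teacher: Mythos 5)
Your proof is correct, and on the hardest step it takes a genuinely different route from the paper's. For the indices with $\varphi_i\in(0,\pi/2)$ the two arguments essentially coincide: where the paper invokes Lemma~2 of Berndt--Br\"uck to produce the unit vector $e_i$ with $\bar{P}_i e_0=\cos(\varphi_i)J_i e_0+\sin(\varphi_i)J_i e_i$, you rederive that statement on the spot by projecting $\bar{P}_i e_0$ onto $\R J_i e_0$ and using Pythagoras, and then both proofs check $e_i\in\H^n\ominus\H e_0$ by the same reduction, via the multiplication table of $\{J_1,J_2,J_3\}$, to the orthogonality relations of Lemma~\ref{lemma:qKa}. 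The genuine divergence is in the treatment of the angles equal to $\pi/2$. The paper splits this into two cases: if $\varphi_2=\pi/2$ it quotes the known classification of subspaces with angle $(\varphi,\pi/2,\pi/2)$, and in the remaining case $k=4$, $\varphi_2\neq\pi/2=\varphi_3$ it completes $\{e_0,\bar{P}_1e_0,\bar{P}_2e_0\}$ by an arbitrary unit vector $v\in V$ and proves $v\perp\g{J}e_0$ by computing $\tr(\Omega(e_0))$ and invoking the isospectrality of the K\"ahler angle map. You instead give one uniform argument covering every $\pi/2$-index: for $w\in W=V\ominus\spann\{f_j:\varphi_j\neq\pi/2\}$ one has $\langle w,J_l e_0\rangle=\langle w,P_l e_0\rangle=0$, since $w\in V$ and $P_l e_0$ is either zero or a multiple of a removed $f_l$, and hence $-J_i w$ is a unit vector of $\H^n\ominus\H e_0$. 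That simple observation (for vectors of $V$ one may replace $J_l e_0$ by $P_l e_0$) subsumes both of the paper's cases, so your proof is more self-contained: it needs neither the classification citation nor the trace computation. The paper's version is shorter on the page but leans on external results and on the $\Omega$-machinery.

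One bookkeeping point should be made explicit, though it is not a real gap. Your count $\dim W=m$, with $m$ the number of $\pi/2$-indices in $\{1,\dots,k-1\}$, and the branch of your key claim asserting $w\perp f_l$, are consistent only because for $k=3$ one necessarily has $\varphi_3=\pi/2$: otherwise index $3$ would contribute a further unit vector $f_3=\bar{P}_3 e_0$ orthogonal to $e_0$, $f_1$, $f_2$, i.e.\ four orthonormal vectors inside the $3$-dimensional $V$, which is absurd (this is also Proposition~\ref{prop:hairyball}(iii), proved before the present lemma). Equivalently, since the convention $\varphi_1\leq\varphi_2\leq\varphi_3$ is in force, whenever some $\pi/2$-index exists one automatically has $\varphi_3=\pi/2$, so the troublesome configuration ($\varphi_l\neq\pi/2$ with $f_l$ not among the removed vectors) never actually arises in your key claim. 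With that one-line remark inserted, the argument is complete.
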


\begin{proof}
Let $e_0\in V$ be a unit vector. By Lemma \ref{lemma:qKa}, there is a canonical basis $\{J_1,J_2,J_3\}$ of $\mathfrak{J}$ such that $e_0$ has K\"ahler angle $\varphi_i$ with respect to $J_i$ for $i\in\{1,2,3\}$, and $\langle P_i e_0,P_j e_0\rangle = \cos^2(\varphi_i)\delta_{ij}$.
In particular, $\langle \bar{P}_i e_0, \bar{P}_j e_0\rangle=0$ for any $i$, $j\in\{0,\dots, k-1\}$, $i\neq j$, with $\varphi_i$, $\varphi_j\neq \pi/2$.

Fix $i\in\{1,2,3\}$. If $\varphi_i=0$, then we take $e_i=0$. Let us assume first that $\varphi_i\in (0,\pi/2)$. By regarding $\H^n$ as a complex vector space $\C^{2n}$ with respect to the complex structure $J_i$, \cite[Lemma~2]{BB01} yields the existence of a unit vector $e_i\in \H^n \ominus \spann_{\R}\{e_0, J_i e_0\}$ satisfying
\[
\bar{P}_i e_0=\cos(\varphi_i) J_i e_0 + \sin(\varphi_i) J_i e_i.
\]
We have to see that $e_i\in \H^n \ominus \H e_0$.
Observe that $\H^n \ominus \spann_{\R}\{e_0, J_i e_0\}$ coincides with the orthogonal sum $(\H^n \ominus \H e_0)\oplus\spann_{\R}\{J_{i+1} e_0, J_{i+2}e_0\}$, where indices are taken modulo~$3$.
Let $a,b \in \R$. Then
\begin{align*}
\langle e_i, \,a J_{i+1}e_0 + b J_{i+2} e_0\rangle
&{}=-\frac{1}{\sin(\varphi_i)}\langle J_i\bar{P}_{i} e_0 + \cos(\varphi_i)e_0,
\,a J_{i+1}e_0+b J_{i+2}e_0\rangle\\
&{}=\frac{1}{\sin(\varphi_i)\cos(\varphi_i)}\left(a\langle P_i e_0, P_{i+2}e_0\rangle
- b \langle P_i e_0, P_{i+1}e_0\rangle\right)=0,
\end{align*}
where in the last equality we have used Lemma \ref{lemma:qKa}.
Therefore, $e_i\in \H^n \ominus \H e_0$.

Now if $\varphi_2=\pi/2$, subspaces $V$ with $\Phi(V)=(\varphi,\pi/2,\pi/2)$, $\varphi\in[0,\pi/2]$, are classified (see~\S\ref{subsec:known_examples}) and they can be spanned by a basis as in the statement (see \cite[p.~232]{BB01} and note that the $\{e_i\}$ in the statement do not have to be $\H$-orthonormal).

Thus, we finally have to deal with the case $k=4$, $\varphi_2\neq \pi/2$, and $\varphi_3=\pi/2$. Then, by the previous argument, there exists a unit vector $v\in \H^n$ such that $\{e_0,\bar{P}_1e_0,\bar{P}_2e_0,v\}$ is an $\R$-orthonormal basis of~$V$, where $\bar{P}_ie_0=\cos(\varphi_i)J_i e_0 + \sin(\varphi_i)J_i e_i$, $i\in\{1,2\}$.
Recalling the definition of the K\"ahler angle map~\eqref{eq:kahler_angle_map}, we have
\begin{align*}
\tr(\Omega(e_0))&=\sum_{i=1}^3 \langle P_i e_0, P_i e_0\rangle
=\sum_{i=1}^3 \left(\langle P_ie_0,e_0\rangle^2 +\langle P_ie_0,\bar{P}_1e_0\rangle^2+\langle P_ie_0,\bar{P}_2e_0\rangle^2+ \langle P_i e_0,v\rangle^2\right)\\
&= \cos^2(\varphi_1)+\cos^2(\varphi_2)+\sum_{i=1}^3  \langle J_i e_0,v\rangle^2 ,
\end{align*}
where we have used Lemma \ref{lemma:qKa} and $\bar{P}_ie_0=P_ie_0/\cos(\varphi_i)$.
Since $\Phi(V)=(\varphi_1,\varphi_2,\pi/2)$, the eigenvalues of $\Omega(e_0)$ are $\cos^2(\varphi_1)$, $\cos^2(\varphi_2)$ and $0$, and hence we deduce that $v\in \H^n\ominus\H e_0$.
Thus, taking $e_3=-J_3 v$ yields the result.
\end{proof}
\begin{remark}
\label{rem:ortho}
Whenever $\varphi_1>0$, the orthogonality of (\ref{eq:basisk34}) yields $\langle J_3 e_1, e_2\rangle=0$, and if $k=4$, also $\langle J_1 e_2, e_3\rangle=\langle J_2 e_3, e_1\rangle=0$.
\end{remark}

\subsection{Subspaces of dimension $3$}\label{subsec:dim3}\hfill

In this subsection we classify $3$-dimensional real subspaces of $\H^n$ with constant quaternionic K\"ahler angle.

\begin{proposition}\label{prop:sk=3}
	Let $V\subset \H^n$ be a real subspace of  dimension $3$. Then, $V$ has constant quaternionic K\"ahler angle if and only if $\Phi(V)=(\varphi,\varphi,\pi/2)$, $\varphi\in[0,\pi/2]$, and for any unit $e_0\in V$, there is a canonical basis $\{J_1,J_2,J_3\}$ of $\g{J}$ such that
	\begin{equation}\label{eq:basis_k3}
	\{e_0, \cos(\varphi) J_1 e_0 + \sin(\varphi) J_1 e_1,\cos(\varphi) J_2 e_0 + \sin(\varphi) J_2 e_2\}
	\end{equation}
	is an orthonormal basis of $V$, where, if $\varphi\neq 0$, $e_1, e_2$ are unit vectors satisfying $e_1,e_2\in\H^n\ominus\H e_0$, $e_2\in\H^n\ominus(\mathrm{Im}\, \H)e_1$, and either $\langle e_1, e_2 \rangle= \cos(\varphi)/(\cos(\varphi)-1)$ with $\varphi\in[\pi/3,\pi/2]$, or $\langle e_1, e_2 \rangle= \cos(\varphi)/(\cos(\varphi)+1)$  with $\varphi\in(0,\pi/2]$.
\end{proposition}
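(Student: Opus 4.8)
The plan is to prove both implications by computing the K\"ahler angle map $\Omega$ explicitly in the basis \eqref{eq:basis_k3} and exploiting its isospectrality. Write $c=\cos\varphi$ and $s=\sin\varphi$. For the \emph{only if} part, Proposition~\ref{prop:hairyball}(iii) already forces $\Phi(V)=(\varphi,\varphi,\pi/2)$ for some $\varphi\in[0,\pi/2]$, and Lemma~\ref{lemma:basisk34} produces, for a fixed unit $e_0\in V$, a canonical basis $\{J_1,J_2,J_3\}$ of $\g{J}$ and unit vectors $e_1,e_2\in\H^n\ominus\H e_0$ for which $\xi_0=e_0$ and $\xi_i=c\,J_ie_0+s\,J_ie_i$ ($i\in\{1,2\}$) form an orthonormal basis of $V$. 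I would dispose of the degenerate values $\varphi\in\{0,\pi/2\}$ at once ($\varphi=0$ gives $\{e_0,J_1e_0,J_2e_0\}$ with $e_1=e_2=0$, and $\varphi=\pi/2$ is the totally real case, both covered by \S\ref{subsec:known_examples}), and concentrate on $\varphi\in(0,\pi/2)$, where $cs\neq0$. The only inner products not yet determined by $e_1,e_2\in\H^n\ominus\H e_0$ are $t=\langle e_1,e_2\rangle$, $p=\langle J_1e_1,e_2\rangle$ and $q=\langle J_2e_1,e_2\rangle$, since $\langle J_3e_1,e_2\rangle=0$ by Remark~\ref{rem:ortho}.

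The computational heart of the argument is to evaluate the nine projections $P_i\xi_j=\pi_V(J_i\xi_j)$, for $i\in\{1,2,3\}$ and $j\in\{0,1,2\}$. Using that $e_0$ is orthogonal to $e_1,e_2$ and to $\g{J}e_1,\g{J}e_2$, the skew-symmetry of each $J_i$, and the relations $J_iJ_{i+1}=J_{i+2}$, one first checks $\pi_V(e_1)=-sq\,\xi_2$, $\pi_V(e_2)=sp\,\xi_1$ and $\pi_V(J_3e_0)=0$, together with the analogous formulas for $\pi_V(J_ke_1)$ and $\pi_V(J_ke_2)$; feeding these back yields, for instance, $P_1\xi_1=-c\,\xi_0+s^2q\,\xi_2$ and $P_3\xi_1=(c^2+s^2t)\,\xi_2$. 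Writing $\mu=c^2+s^2t$ and a general vector as $v=a_0\xi_0+a_1\xi_1+a_2\xi_2$, this gives closed expressions for the $P_iv$ and hence for $\Omega(v)_{ij}=\langle P_iv,P_jv\rangle$. I expect this bookkeeping --- keeping track of which projections pick up the coefficients $p,q,t$ --- to be the main obstacle; everything afterwards is short algebra.

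With $\Omega(v)$ in hand, constancy of the quaternionic K\"ahler angle is equivalent to isospectrality of $\Omega(v)$ with spectrum $\{c^2,c^2,0\}$ (the criterion following \eqref{eq:kahler_angle_map}), and for necessity it suffices to use the trace. Since $\tr\Omega(v)$ is a quadratic form in $(a_0,a_1,a_2)$ that must equal $2c^2\lVert v\rVert^2$ identically, matching the $a_0a_1$ and $a_0a_2$ coefficients gives $2cs^2p=0$ and $2cs^2q=0$, forcing $p=q=0$, while matching the $a_1^2,a_2^2$ coefficients gives $\mu^2=c^2$. Solving $c^2+s^2t=\pm c$ yields exactly $t=c/(c+1)$ or $t=c/(c-1)$, that is $\langle e_1,e_2\rangle=\cos\varphi/(\cos\varphi\pm1)$; the Cauchy--Schwarz bound $|t|\le1$ then restricts $t=c/(c-1)$ to $\varphi\in[\pi/3,\pi/2]$ and leaves $t=c/(c+1)$ valid for all $\varphi\in(0,\pi/2]$. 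Finally $p=q=0$ together with $\langle J_3e_1,e_2\rangle=0$ is precisely the condition $e_2\in\H^n\ominus(\mathrm{Im}\,\H)e_1$, completing the necessity.

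For the \emph{if} direction I would run the same computation under the hypotheses $p=q=0$ (equivalent to $e_2\perp\g{J}e_1$) and $\mu=\pm c$ (equivalent to the two admissible values of $t$). Then the formulas for $\Omega(v)$ collapse to $\Omega(v)=c^2(\Id-nn^{\top})$ with $n=(a_2,-a_1,\pm a_0)$, which is a unit vector because $a_0^2+a_1^2+a_2^2=1$; hence $\Omega(v)$ is $\cos^2\varphi$ times a rank-two orthogonal projection and has eigenvalues $\cos^2\varphi,\cos^2\varphi,0$ for every unit $v\in V$. By the isospectrality criterion this means $\Phi(V)=(\varphi,\varphi,\pi/2)$, so $V$ has constant quaternionic K\"ahler angle, which closes the equivalence.
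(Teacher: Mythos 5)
Your proposal is correct and follows essentially the same route as the paper: reduce to $\Phi(V)=(\varphi,\varphi,\pi/2)$ via Proposition~\ref{prop:hairyball}, take the basis of Lemma~\ref{lemma:basisk34}, dispose of $\varphi\in\{0,\pi/2\}$ via \S\ref{subsec:known_examples}, compute the K\"ahler angle map in that basis, and exploit isospectrality --- with the converse handled by exactly the same explicit matrix $\Omega(v)$ (your observation that it equals $\cos^2(\varphi)(\Id-nn^{\top})$ is a cleaner way of stating what the paper leaves as ``easy to see''). The only difference is cosmetic: where you extract $p=q=0$ and $\mu^2=\cos^2(\varphi)$ by matching coefficients of the identity $\tr\Omega(v)=2\cos^2(\varphi)\lVert v\rVert^2$ over all $v\in V$, the paper evaluates $\Omega$ only at the two vectors $\bar{P}_1e_0,\bar{P}_2e_0$ and combines the min-max theorem (to bound diagonal entries by $\cos^2(\varphi)$) with the trace identity; both routes rest on the same projection bookkeeping and yield the same constraints.
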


\begin{proof}
	By Proposition \ref{prop:hairyball}, we have that $\varphi_1=\varphi_2=\varphi\in[0,\pi/2]$ and $\varphi_3=\pi/2$. Let us assume that $V$ is spanned by the basis described in Lemma \ref{lemma:basisk34} with $k=3$. If $\varphi=0$ or $\varphi=\pi/2$ the claim follows from the classification of subspaces with constant quaternionic K\"ahler angle $(0,0,\pi/2)$ or $(\pi/2,\pi/2,\pi/2)$; see \S \ref{subsec:known_examples}.
	
	Thus, let us assume $\varphi\in(0,\pi/2)$. Then, for each $l\in\{1,2\}$ and understanding the subscript $l+1\in\{1,2\}$ modulo $2$,
	\begin{align*}
	\Omega(\bar{P}_l e_0)_{ij}
	&{}=\langle  P_i \bar{P}_l e_0, P_j \bar{P}_l e_0\rangle
	=\langle J_i \bar{P}_le_0,e_0\rangle \langle J_j\bar{P}_le_0,e_0\rangle
    + \sum_{r=1}^2 \langle J_i \bar{P}_l e_0, \bar{P}_r e_0\rangle
    \langle J_j \bar{P}_l e_0,\bar{P}_r e_0\rangle,
	\\
	&{}=\langle \bar{P}_l e_0, P_i e_0\rangle\langle \bar{P}_l e_0,P_j e_0\rangle
    + \langle J_i \bar{P}_l e_0, \bar{P}_{l+1} e_0\rangle
    \langle J_j \bar{P}_l e_0,\bar{P}_{l+1} e_0\rangle,
	\end{align*}
	where in the second equality we have calculated the orthogonal projection of vectors onto $V$ by using  the orthonormal basis $\{e_0,\bar{P}_1e_0,\bar{P}_2e_0\}$ of~$V$. Hence, for $l\in\{1,2\}$, using Lemma~\ref{lemma:basisk34} we have
\begin{equation}\label{eq:omegaij3}
\begin{aligned}	
\Omega(\bar{P}_l e_0)_{ll}
&{}=\cos^2(\varphi) + \langle e_l, J_{l+1} e_{l+1} \rangle^2 \sin^4(\varphi), \\
\Omega(\bar{P}_l e_0)_{l+1,l}
&{}=\langle e_{1}, J_{1} e_{2} \rangle\langle e_{1}, J_{2} e_{2} \rangle \sin^4(\varphi),\\
\Omega(\bar{P}_l e_0)_{l+1,l+1}
&{}= \langle e_{l+1}, J_{l} e_{l} \rangle^2 \sin^4(\varphi), \\
\Omega(\bar{P}_l e_0)_{13}
&{}=\langle e_2, J_2 e_1\rangle \sin^2(\varphi)(\cos^2(\varphi) + \langle e_1, e_2\rangle\sin^2(\varphi)),\\
\Omega(\bar{P}_l e_0)_{23}
&{}=-\langle e_1, J_1 e_2\rangle \sin^2(\varphi)(\cos^2(\varphi) + \langle e_1, e_2\rangle\sin^2(\varphi)),\\
\Omega(\bar{P}_l e_0)_{33}
&{}=(\cos^2(\varphi) + \langle e_1, e_2\rangle\sin^2(\varphi))^2.
\end{aligned}
\end{equation}
	Now, since $\Omega(\bar{P}_l e_0)$ is symmetric with eigenvalues $\cos^2(\varphi)$ (of multiplicity $2$) and $0$, by the min-max theorem, one obtains
	\[
    0\leq \Omega(\bar{P}_l e_0)_{ll}  \leq \cos^2(\varphi), \qquad l\in\{1,2\}.
    \]
	This implies $\langle e_1, J_2 e_2\rangle =\langle e_2, J_1 e_1\rangle=0$, which together with Remark~\ref{rem:ortho} yields $e_2\in\H^n\ominus(\mathrm{Im}\, \H)e_1$. 
Taking again into account the spectrum of $\Omega(\bar{P}_1 e_0)$, we have the following relation for its trace,
	\[2\cos(\varphi)^2=\tr(\Omega(\bar{P}_1 e_0))=(\cos^2(\varphi) + \langle e_1, e_2\rangle\sin^2(\varphi))^2+\cos ^2(\varphi ).\]
From this and the fact that $e_1$ and $e_2$ are unit vectors, we deduce that either $\langle e_1, e_2 \rangle= \cos(\varphi)/(\cos(\varphi)-1)$ where $\varphi\in[\pi/3,\pi/2)$ or $\langle e_1, e_2 \rangle= \cos(\varphi)/(1+\cos(\varphi))$  where $\varphi\in(0,\pi/2)$. This proves the necessity of the statement.

For the converse we take an arbitrary unit vector $v\in V$ which we write as
\[
v=x_0 e_0 + x_1 \bigl(\cos(\varphi)J_1 e_0+\sin(\varphi)J_1 e_1\bigr)
+ x_2 \bigl(\cos(\varphi)J_2 e_0+\sin(\varphi)J_2 e_2\bigr).
\]
Then, if $\varepsilon\in\{\pm 1\}$ is such that $\langle e_1,e_2\rangle =\cos(\varphi)/(1+\varepsilon\cos(\varphi))$, we have
\[
\Omega(v)=\cos^2(\varphi)
\begin{pmatrix}
x_0^2+x_1^2  &   
x_1 x_2 &   
-\varepsilon x_0 x_2
\\
x_1 x_2 &   
x_0^2+x_2^2  &   
\varepsilon x_0 x_1
\\
-\varepsilon x_0 x_2  &   
\varepsilon x_0 x_1  &   
x_1^2+x_2^2
\end{pmatrix}.
\]
Since $v$ is a unit vector, $x_0^2+x_1^2+x_2^2=1$, and it is now easy to see that $\Omega(v)$ has a double eigenvalue $\cos^2(\varphi)$, and a simple eigenvalue $0$.
\end{proof}

\begin{remark}\label{rem:k3smaller_n}
	We will denote by $V_{+}^\varphi$ and $V_{-}^\varphi$ any real subspace of $\H^n$ constructed as in Proposition~\ref{prop:sk=3},
	depending on whether  $\langle e_1, e_2 \rangle= \cos(\varphi)/(\cos(\varphi)+1)$ for $\varphi\in(0,\pi/2]$, or  $\langle e_1, e_2 \rangle= \cos(\varphi)/(\cos(\varphi)-1)$ for $\varphi\in[\pi/3,\pi/2]$, respectively.
	Note that the subspaces $V_\pm^\varphi$ can be constructed as subspaces of any $\H^n$ with $n\geq 3$. One can easily check that the only one that fits into an $\H^2$ is $V_-^{\pi/3}$ (but it cannot fit into $\H$).
\end{remark}

\begin{proposition}
	\label{proto3}
	Let $V$ be a subspace of $\H^n$ with constant quaternionic K\"ahler angle and dimension $3$. Then $V$ is protohomogeneous.
\end{proposition}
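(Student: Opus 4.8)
The plan is to exhibit a transitive action of the (a priori disconnected) normalizer $N:=N_{\Sp(1)\Sp(n)}(V)$ on the unit sphere $\mathbb{S}^2\subset V$, and then upgrade this to its identity component. By Proposition~\ref{prop:hairyball} we know $\Phi(V)=(\varphi,\varphi,\pi/2)$, and the cases $\varphi\in\{0,\pi/2\}$ are already settled: then $V$ is of type $(0,0,\pi/2)$ or totally real, both protohomogeneous (\S\ref{subsec:known_examples}). So I would assume $\varphi\in(0,\pi/2)$ and work with the adapted basis supplied by Proposition~\ref{prop:sk=3}.

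The guiding idea is that $V$ ``looks the same'' from each of its unit vectors. Fix a unit $e_0\in V$ with adapted data $\{J_1,J_2,J_3\}$, $e_1,e_2$ as in Proposition~\ref{prop:sk=3}, and let $v\in\mathbb{S}^2\subset V$ be arbitrary, with its own adapted data $\{J_1^v,J_2^v,J_3^v\}$, $e_1^v,e_2^v$. Since $e_1,e_2\in\H^n\ominus\H e_0$ are unit vectors with $e_2\in\H^n\ominus(\mathrm{Im}\,\H)e_1$, the triple $(e_0,e_1,e_2)$ has real $\H$-Gram matrix $\left(\begin{smallmatrix}1&0&0\\0&1&c\\0&c&1\end{smallmatrix}\right)$ with $c=\langle e_1,e_2\rangle$, and likewise $(v,e_1^v,e_2^v)$ has Gram entry $c^v=\langle e_1^v,e_2^v\rangle$. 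Provided $c=c^v$, these $\H$-frames have equal $\H$-Gram matrices; $\H$-orthonormalizing them, extending to $\H$-orthonormal bases of $\H^n$, and invoking the description of $\Sp(1)\Sp(n)$ recalled in \S\ref{subsec:kahlerangle}, I obtain $T\in\Sp(1)\Sp(n)$ with $Te_0=v$, $Te_i=e_i^v$ and $TJ_i=J_i^vT$. Applying $T$ to the basis \eqref{eq:basis_k3} gives $T\xi_i=\xi_i^v$ for $i\in\{0,1,2\}$, whence $TV=V$ and $Te_0=v$. As $v$ was arbitrary, $N$ acts transitively on $\mathbb{S}^2$.

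The main obstacle is precisely the identity $c=c^v$, that is, that the sign $\varepsilon$ of Proposition~\ref{prop:sk=3} is independent of the chosen base point. I would establish this by connectedness: $c^v$ can take only the two distinct values $\cos(\varphi)/(1+\cos\varphi)$ and $\cos(\varphi)/(\cos\varphi-1)$, and it is locally constant in $v$, so it is constant on the connected sphere $\mathbb{S}^2$; the delicate point requiring care is to confirm that $c^v$ depends continuously on $v$ and is insensitive to the residual rotational freedom in the canonical pair $\{J_1^v,J_2^v\}$ (which is present because $\varphi_1=\varphi_2$). As a consistency check, for $\varphi\in(0,\pi/3)$ only $\varepsilon=+1$ is numerically admissible, since otherwise $|c^v|>1$; and the degenerate value $c=-1$, occurring exactly for $V_-^{\pi/3}\subset\H^2$ (Remark~\ref{rem:k3smaller_n}), causes no difficulty, as then $(e_0,e_1)$ already determines the frame and $\Sp(n)$-transitivity on such frames still applies. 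A fully computational alternative would be to construct an explicit homomorphism $\Sp(1)\to N$ acting as $\SO(3)$ on $V$, but this requires pinning down the compensating $\Sp(n)$-factor and is more laborious.

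Finally, to pass from $N$ to its identity component $N^0$: the group $N$ is a closed, hence compact, subgroup of $\Sp(1)\Sp(n)$, and $\dim N^0=\dim N$, so the orbit $N^0\cdot e_0$ has the same dimension $2$ as $N\cdot e_0=\mathbb{S}^2$. A $2$-dimensional orbit of a compact group is a closed and open submanifold of the connected surface $\mathbb{S}^2$, hence equals $\mathbb{S}^2$. Therefore the connected group $N^0_{\Sp(1)\Sp(n)}(V)$ acts transitively on $\mathbb{S}^2$, which is exactly the assertion that $V$ is protohomogeneous.
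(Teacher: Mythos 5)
Your proposal follows the same route as the paper's own proof: reduce to $\varphi\in(0,\pi/2)$, show that the inner product $c^v=\langle e_1^v,e_2^v\rangle$ attached to a unit $v\in V$ by Proposition~\ref{prop:sk=3} is independent of $v$ (a two-valued, continuous, hence constant function on the connected sphere $\mathbb{S}^2$), and then produce $T\in\Sp(1)\Sp(n)$ with $TV=V$ and $Te_0=v$ from the fact that $\{e_0,e_1,e_2\}$ and $\{v,e_1^v,e_2^v\}$ are totally real frames with equal Gram matrices. Your closing argument passing from the full normalizer $N$ to its identity component is correct, and in fact makes explicit a point the paper leaves implicit.

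However, the step you flag as ``the delicate point requiring care''---that $c^v$ is well defined (insensitive to the residual rotation of the pair $\{J_1^v,J_2^v\}$, present because $\varphi_1=\varphi_2$) and depends continuously on $v$---is precisely the substance of the paper's proof, and you do not supply it. As it stands this is a genuine gap: without well-definedness, $c^v$ is not even a function on $\mathbb{S}^2$, and without continuity the connectedness argument cannot start. The paper fills this in two moves. First, writing any other admissible pair as $J_i'=\cos(\theta)J_i+(-1)^{i+1}\sin(\theta)J_{i+1}$, it expands $J_i'\bar{P}_i'$ (equation~\eqref{eq:composj}) and checks, using the formula $c^v=\bigl(\langle J_1\bar{P}_1 v, J_2\bar{P}_2 v\rangle-\cos^2(\varphi)\bigr)/\sin^2(\varphi)$ of equation~\eqref{eq:prodf}, that $\langle e_1,e_2\rangle-\langle e_1',e_2'\rangle=0$; this is an actual cancellation obtained by computation, not an a priori symmetry, so it cannot simply be asserted. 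Second, for continuity, it observes that $v\mapsto\spann\{J_1^v,J_2^v\}\in G_2(\g{J})$ is continuous because this plane is the sum of eigenspaces of the quadratic form $J\mapsto L_v(J,J)$ for the eigenvalue $\cos^2(\varphi)$ and $L_v$ depends continuously on $v$; together with well-definedness this makes $\Theta(v)=c^v$ continuous, hence constant. With these two verifications inserted, your argument coincides with the paper's proof.
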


\begin{proof}
	We know from Proposition~\ref{prop:sk=3} that $\Phi(V)=(\varphi,\varphi,\pi/2)$. We can assume that $\varphi\in (0,\pi/2)$ since, otherwise, $V$ is known to be protohomogeneous (see~\S\ref{subsec:known_examples}).
	
	Let $e_0\in V$ be an arbitrary unit vector. By Lemma \ref{lemma:qKa} there is a canonical basis $\{J_1,J_2,J_3\}$ of $\g{J}$ such that $e_0$ has K\"ahler angle $\varphi$ with respect to $J_1$ and $J_2$, and K\"ahler angle $\pi/2$ with respect to $J_3$. In view of Lemma~\ref{lemma:basisk34} and Proposition~\ref{prop:sk=3}, let us consider the unit vectors $e_i\in \H^n \ominus  \H e_0$, $i\in\{1,2\}$, given by
	\begin{equation}\label{eq:e_i}
	e_i:=-(J_i \bar{P}_i e_0 + \cos(\varphi)e_0)/\sin(\varphi), \qquad i\in\{1,2\},
	\end{equation}
	where $\bar{P}_i:=\pi_{V} J_i/\cos(\varphi)$. On the one hand, by~\eqref{eq:e_i} we have
	\begin{equation}
	\label{eq:prodf}
	\begin{aligned}
	\langle e_1, e_2 \rangle&= \frac{1}{\sin^2(\varphi)}\langle J_1 \bar{P}_1 e_0 + \cos(\varphi) e_0, J_2 \bar{P}_2 e_0 + \cos(\varphi) e_0\rangle\\
	&=  \frac{1}{\sin^2(\varphi)}\bigl( \langle J_1 \bar{P}_1 e_0, J_2 \bar{P}_2 e_0\rangle - \cos^2(\varphi)\bigr).
	\end{aligned}
	\end{equation}
	On the other hand, again by Proposition~\ref{prop:sk=3}, $\langle e_1,e_2\rangle$ can take two possible values. We will first see that, given $V$, $\langle e_1,e_2\rangle$ is independent of $e_0$.
	
Let $\mathbb{S}^2$ denote the unit sphere of $V$.
We define $\Theta \colon \mathbb{S}^2\to \mathbb{R}$  by $\Theta(e_0)=\langle e_1, e_2\rangle$. We claim that $\Theta$ is well defined.
	Let $\{J'_1, J'_2, J_3\}$ be another canonical basis of $\mathfrak{J}$ such that $e_0$ has K\"ahler angle $\varphi$ with respect to $J'_i$, $i\in\{1,2\}$, and let $e'_i:=-(J'_i \bar{P}'_i e_0 + \cos(\varphi) e_0)/\sin(\varphi)$ where $\bar{P}'_i:=\pi_V  J'_i /\cos(\varphi)$ for $i\in\{1,2\}$. Then, there is $\theta\in[0,2\pi)$ such that $J'_i=\cos(\theta) J_i+ (-1)^{i+1} \sin(\theta) J_{i+1}$ for $i\in\{1,2\}$ and subscripts modulo $2$. Thus,
	\begin{equation}
	\label{eq:composj}
	\begin{aligned}
	J'_i \bar{P}'_i&=(\cos(\theta) J_i + (-1)^{i+1} \sin(\theta) J_{i+1})(\cos(\theta)\bar{P}_i + (-1)^{i+1} \sin(\theta) \bar{P}_{i+1})\\
	&=\cos^2(\theta) J_i \bar{P}_i + \sin^2(\theta)J_{i+1} \bar{P}_{i+1} +(-1)^{i+1}\cos(\theta)\sin(\theta)(J_1 \bar{P}_2 + J_2 \bar{P}_1).
	\end{aligned}
	\end{equation}
	Consequently, using Equation (\ref{eq:prodf}) twice, and then (\ref{eq:composj}), we get,  after some calculations,
\[
\langle e_1, e_2\rangle -\langle e'_1, e'_2\rangle=\frac{1}{\sin^2(\varphi)}\left(\langle J_1 \bar{P}_1 e_0, J_2 \bar{P}_2 e_0 \rangle -  \langle J'_1 \bar{P}'_1 e_0, J'_2 \bar{P}'_2 e_0 \rangle \right)=0,
\]
	which implies that $\Theta$ is well-defined.
	
	Now note that the assignment $e_0\in \mathbb{S}^2\mapsto \spann\{J_1,J_2\}\in G_2(\g{J})$, where $G_2(\g{J})$ is the Grassmannian of $2$-planes of $\g{J}\cong\R^3$, is continuous due to the continuous dependence of the quadratic form $J\in\g{J}\mapsto L_v(J,J)=\langle P_Jv,P_Jv\rangle\in\R$ on $v$. Hence, the map $\Theta$ is also continuous. But, as mentioned just after~\eqref{eq:prodf}, $\Theta(\mathbb{S}^2)$ has at most two elements. Therefore, $\Theta$ is constant on~$\mathbb{S}^2$.
	
Finally, we prove that $V$ is protohomogeneous. Let $e_0$, $e_0'$ be arbitrary unit vectors in~$V$. Let $\{J_1,J_2,J_3\}$, $\{J_1',J_2',J_3'\}$ be canonical bases of $\g{J}$, and $e_1,e_2,e_1',e_2'$ be unit vectors in $V$ such that both \eqref{eq:basis_k3}, and $\eqref{eq:basis_k3}$ with $e_i'$ instead of $e_i$ and $J_i'$ instead of $J_i$, are orthonormal bases of $V$.	
Both sets of vectors $\{e_0,e_1,e_2\}$ and $\{e_0',e_1',e_2'\}$ span a totally real subspace of $\H^n$, and since $\Theta$ is constant, $\langle e_i,e_j\rangle=\langle e_i',e_j'\rangle$ for all $i,j\in\{0,1,2\}$.
It then follows that there exists an element $T\in \Sp(1)\Sp(n)$ such that $Te_i=e_i'$ for each	 $i\in\{0,1,2\}$, and $TJ_j=J'_jT$ for each $j\in\{1,2,3\}$.  Thus, by~\eqref{eq:e_i} we get $T\bar{P}_i e_0=\bar{P}'_i e_0'$ for
	$i\in\{0,1,2\}$, where $\bar{P}_0=\bar{P}'_0=\Id$. Therefore, $T$ is an element of $\Sp(1)\Sp(n)$ such that $TV=V$ and $Te_0=e_0'$. Since $e_0,e_0'$ are arbitrary, this proves that $V$ is protohomogeneous.
\end{proof}

Finally we show that the two types of subspaces $V_+^\varphi$ and $V_-^\varphi$ introduced in Remark~\ref{rem:k3smaller_n}
are indeed inequivalent for $\varphi\neq\pi/2$. Recall that $V_+^\varphi$ is defined for all $\varphi\in(0,\pi/2]$, but $V_-^\varphi$ only for $\varphi\in[\pi/3,\pi/2]$.

\begin{proposition}\label{prop:ineqk=3}
	Let $\varphi\in[\pi/3,\pi/2]$. Then there exists $T\in \Sp(1)\Sp(n)$ such that $TV_+^\varphi=V_{-}^\varphi$ if and only if $\varphi=\pi/2$.
\end{proposition}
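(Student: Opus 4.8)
The plan is to distinguish $V_+^\varphi$ and $V_-^\varphi$ by means of the scalar $\langle e_1,e_2\rangle$, which Proposition~\ref{proto3} already shows to be a well-defined invariant of the subspace, independent of the chosen unit vector $e_0$ and of the accompanying canonical basis. I will argue that this scalar is moreover invariant under the action of $\Sp(1)\Sp(n)$, and then simply read off that the two prescribed values $\cos(\varphi)/(\cos(\varphi)+1)$ and $\cos(\varphi)/(\cos(\varphi)-1)$ coincide precisely when $\cos(\varphi)=0$.

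The easy implication is the one assuming $\varphi=\pi/2$: in that case both $V_+^{\pi/2}$ and $V_-^{\pi/2}$ have constant quaternionic K\"ahler angle $(\pi/2,\pi/2,\pi/2)$ and are therefore totally real subspaces of dimension $3$. Since there is, up to congruence in $\Sp(1)\Sp(n)$, a unique totally real subspace of each dimension (see \S\ref{subsec:known_examples}), the required $T$ exists.

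For the converse, suppose $T\in\Sp(1)\Sp(n)$ satisfies $TV_+^\varphi=V_-^\varphi$ with $\varphi\in[\pi/3,\pi/2)$. Fix a unit vector $e_0\in V_+^\varphi$ together with the canonical basis $\{J_1,J_2,J_3\}$ and the unit vectors $e_1,e_2$ furnished by Proposition~\ref{prop:sk=3}, so that $\langle e_1,e_2\rangle=\cos(\varphi)/(\cos(\varphi)+1)$. The key observation is that the construction of $e_1,e_2$ is equivariant under $T$: since $T$ maps canonical bases to canonical bases, intertwines each $J_i$ with $J_i':=TJ_iT^{-1}$, and satisfies $\pi_{V_-^\varphi}=T\pi_{V_+^\varphi}T^{-1}$, one finds $\bar{P}_i':=\pi_{V_-^\varphi}J_i'/\cos(\varphi)=T\bar{P}_iT^{-1}$, and hence that $Te_0\in V_-^\varphi$, together with the basis $\{J_1',J_2',J_3'\}$ and the vectors $Te_1,Te_2$, constitutes precisely the data of Proposition~\ref{prop:sk=3} for $V_-^\varphi$ with base point $Te_0$. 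By the constancy of $\langle e_1,e_2\rangle$ on $V_-^\varphi$ (Proposition~\ref{proto3}) we must then have $\langle Te_1,Te_2\rangle=\cos(\varphi)/(\cos(\varphi)-1)$, while orthogonality of $T$ gives $\langle Te_1,Te_2\rangle=\langle e_1,e_2\rangle=\cos(\varphi)/(\cos(\varphi)+1)$. Equating the two expressions and cross-multiplying yields $-\cos(\varphi)=\cos(\varphi)$, so $\cos(\varphi)=0$, contradicting $\varphi<\pi/2$.

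The main technical point, and the only place where some care is needed, is this equivariance: one must verify that $T$ sends the Proposition~\ref{prop:sk=3} data attached to $e_0$ in $V_+^\varphi$ to valid such data attached to $Te_0$ in $V_-^\varphi$. This rests entirely on the characterization of $\Sp(1)\Sp(n)$ recalled in \S\ref{subsec:kahlerangle} (its elements preserve the inner product and intertwine canonical bases of $\g{J}$), which gives $\bar{P}_i'=T\bar{P}_iT^{-1}$ and hence $e_i'=Te_i$; the well-definedness granted by Proposition~\ref{proto3} is then exactly what forces the transported scalar to coincide with the invariant value of $V_-^\varphi$, closing the argument.
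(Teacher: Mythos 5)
Your proof is correct, and it reaches the conclusion by a genuinely different route from the paper's, although both ultimately distinguish $V_+^\varphi$ from $V_-^\varphi$ by the scalar $\langle e_1,e_2\rangle$. The paper proceeds by normalizing $T$: using the protohomogeneity granted by Proposition~\ref{proto3} it arranges a common unit vector $e_0$ with $Te_0=e_0$, works out how $T$ conjugates the canonical basis (first $TJ_3=\varepsilon J_3T$, then a rotation angle $\theta$ on $\spann\{J_1,J_2\}$), composes $T$ with an isotropy element of the $\SO(3)$-action on $V_-^\varphi$ to force $T\bar{P}_1^+e_0=\varepsilon\bar{P}_1^-e_0$, and finally deduces $\theta=0$ and $Te_i^+=e_i^-$ for $i\in\{1,2\}$, contradicting the orthogonality of $T$ since $\langle e_1^+,e_2^+\rangle\neq\langle e_1^-,e_2^-\rangle$. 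You bypass this normalization entirely: the adapted data of Proposition~\ref{prop:sk=3} transports equivariantly under any $T\in\Sp(1)\Sp(n)$ (namely $J_i'=TJ_iT^{-1}$, $\bar{P}_i'=T\bar{P}_iT^{-1}$, $e_i'=Te_i$), so $\langle e_1,e_2\rangle$ is an invariant of the whole congruence class, and its two prescribed values are incompatible unless $\cos(\varphi)=0$. This is shorter and more conceptual, and it is in fact the same mechanism the paper itself employs for the $4$-dimensional analogue (Proposition~\ref{prop:ineqk=4}), where the transported canonical basis is compared against an invariant of $V_-$, there the Clifford sign $\bar{P}_1\bar{P}_2=\pm\bar{P}_3$ rather than $\langle e_1,e_2\rangle$. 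One caveat: you rely on the well-definedness and constancy of $\Theta(e_0)=\langle e_1,e_2\rangle$, which is established inside the \emph{proof} of Proposition~\ref{proto3} but is not part of its statement (which only asserts protohomogeneity); this reuse of proof-internal content is legitimate but should be cited explicitly, since it is precisely what forces the transported value $\langle Te_1,Te_2\rangle$ to equal the defining value $\cos(\varphi)/(\cos(\varphi)-1)$ of $V_-^\varphi$, and your argument therefore inherits whatever care that well-definedness claim requires (adapted bases differing by reflections as well as rotations), exactly as the paper's own proof does.
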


\begin{proof}
	If $\varphi=\pi/2$, then $V_+^{\pi/2}$ and $V_-^{\pi/2}$ are totally real, therefore equivalent. Let us assume that $\varphi\neq \pi/2$ and that there is $T\in \Sp(1)\Sp(n)$ such that $T V_+^\varphi= V_{-}^\varphi$.
By applying an element of $\Sp(1)\Sp(n)$ if necessary, we can assume that there is a unit vector $e_0\in V_+^\varphi\cap V_{-}^\varphi$ and that $e_0$ has quaternionic K\"ahler angle $(\varphi,\varphi,\pi/2)$ with respect to both $V^\varphi_+$ and $V^\varphi_-$ and a common canonical basis $\{J_1,J_2,J_3\}$ of $\g{J}$.
	Then, by Lemma~\ref{lemma:basisk34} and Proposition~\ref{prop:sk=3}, $V_\pm^\varphi$ is spanned by the basis $\{e_0,\bar{P}_{1}^\pm e_0,\bar{P}_{2}^\pm e_0\}$, 
	where  $\bar{P}_{i}^\pm:=\pi_{V_\pm^\varphi}  J_i /\cos(\varphi)$, $i\in\{1,2\}$. Moreover, 
\[
\bar{P}^\pm_i e_0=\cos(\varphi)J_i e_0 + \sin(\varphi) J_i e^\pm_i, \,\text{ with }
\langle e_1^\pm,e_2^\pm\rangle=\frac{\cos(\varphi)}{\cos(\varphi)\pm 1},
\]
and $e_i^\pm\in\H^n\ominus\H e_0$,  $i\in\{1,2\}$.
	
	By Proposition \ref{proto3}, we can assume that $T e_0=e_0$. Let $J'=TJ_3T^{-1}\in\g{J}$. Since $J_3e_0\in\H^n\ominus V_+^\varphi$, we have $J'e_0=J'Te_0=TJ_3e_0\in \H^n \ominus V_{-}^\varphi$.
	This implies $TJ_3=\varepsilon J_3 T$, where $\varepsilon\in\{-1,1\}$, because $\pm J_3$ are the only complex structures in $\mathfrak{J}$ which send $e_0$ to $\H^n \ominus V_{-}^\varphi$.
	Therefore, there exists $\theta\in[0,2\pi)$ such that
	\begin{equation}\label{eq:conj_by_T}
	TJ_i=\varepsilon^i(\cos(\theta) J_i + (-1)^{i+1} \sin(\theta) J_{i+1})T, \quad i\in\{1,2\},\quad \text{ and } \quad TJ_3=\varepsilon J_3 T.
	\end{equation}
	Using~\eqref{eq:conj_by_T} and $Te_0=e_0$, we have
\begin{equation}\label{eq:TP_1+}
\begin{aligned}
T\bar{P}^{+}_1 e_0
&{}=\cos(\varphi) T J_1 e_0 + \sin(\varphi) T J_1  e^+_1
\\
&{}=\varepsilon\cos(\varphi)(\cos(\theta) J_1 e_0 + \sin(\theta) J_2  e_0) + \varepsilon\sin(\varphi)(\cos(\theta)J_1Te_1^+ + \sin(\theta) J_2 Te^+_1).
\end{aligned}
\end{equation}
	
	By Proposition~\ref{proto3}, $V_\pm^\varphi$ is protohomogeneous, and note that $\SO(3)$ is the only connected subgroup of $\Sp(1)\Sp(n)\subset \SO(4n)$ that acts transitively and effectively on the unit sphere of $V_\pm^\varphi$. Thus, we can assume that $T\bar{P}_{1}^{+}e_0=\varepsilon \bar{P}_{1}^{-}e_0$, just by composing $T$ with some element in the isotropy of the action of $\SO(3)$ on $V_-^\varphi$ at $e_0$. But inserting~\eqref{eq:TP_1+} and $\bar{P}^{-}_1 e_0=\cos(\varphi)  J_1 e_0 + \sin(\varphi)J_1  e^-_1$ into the equality $T\bar{P}_{1}^{+}e_0=\varepsilon \bar{P}_{1}^{-}e_0$, and analyzing the $\H e_0$ and $\H^n\ominus\H e_0$ components (note that $e_1^\pm\in\H^n\ominus\H e_0$, $Te_0=e_0$, and $T$ preserves $\H$-orthonormality) we get $\theta=0$ and $Te_1^+=e_1^-$. Moreover, by~\eqref{eq:conj_by_T} we get $TJ_i=\varepsilon^i J_i T$,  $i\in\{1,2,3\}$.
	
	Since $Te_0=e_0$, $T\bar{P}_{1}^{+}e_0=\varepsilon \bar{P}_{1}^{-}e_0$ and $TV_+^\varphi=V_-^\varphi$, we must have $T\bar{P}_{2}^{+}e_0=\pm \bar{P}_{2}^{-}e_0$. Then, inserting $\bar{P}^\pm_2 e_0=\cos(\varphi)J_2 e_0 + \sin(\varphi) J_2 e^\pm_2$ in the last equality, and using $TJ_2=J_2T$, we deduce that $T e^+_2=e^-_2$. But this jointly with $Te_1^+=e_1^-$ yields a contradiction with the fact that $T$ is an orthogonal transformation of $\H^n$, because $\langle e^+_1,e^+_2\rangle\neq\langle e^-_1,e^-_2\rangle$ for all $\varphi\neq\pi/2$.
\end{proof}
\subsection{Subspaces of dimension four}\label{subsec:dim4}\hfill

The aim of this subsection is to classify $4$-dimensional real subspaces of $\H^n$ with constant quaternionic K\"ahler angle.

We start by restricting our attention to subspaces with $\varphi_1=0$.

\begin{proposition}
	\label{prop:angle0}
	Let $V\subset \mathbb{H}^n$ be a real subspace of dimension $4$ with constant quaternionic K\"ahler angle $(0,\varphi_2,\varphi_3)$.  Then, $\varphi_2=\varphi_3\in[0,\pi/2]$.
\end{proposition}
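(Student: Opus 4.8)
The plan is to reduce the statement to the single equality $\cos^2(\varphi_2)=\cos^2(\varphi_3)$ and to extract it from the isospectrality of the K\"ahler angle map, more precisely from the fact that its trace is pointwise constant. Since $\Phi(V)=(0,\varphi_2,\varphi_3)$, for every unit vector $u\in V$ the matrix $\Omega(u)$ has eigenvalues $1,\cos^2(\varphi_2),\cos^2(\varphi_3)$, so that
\[
\tr\Omega(u)=\sum_{i=1}^3\langle P_iu,P_iu\rangle=1+\cos^2(\varphi_2)+\cos^2(\varphi_3)
\]
is independent of $u$. The idea is to compute this trace at two well-chosen unit vectors of $V$ and compare.

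First I would fix a unit vector $e_0\in V$ and apply Lemma~\ref{lemma:basisk34} with $k=4$. Since $\varphi_1=0$ we take $e_1=0$, so that there is a canonical basis $\{J_1,J_2,J_3\}$ of $\g{J}$ for which
\[
\xi_0=e_0,\quad \xi_1=J_1e_0,\quad \xi_2=\cos(\varphi_2)J_2e_0+\sin(\varphi_2)J_2e_2,\quad \xi_3=\cos(\varphi_3)J_3e_0+\sin(\varphi_3)J_3e_3
\]
is an $\R$-orthonormal basis of $V$, with $e_2,e_3\in\H^n\ominus\H e_0$. The two test vectors will be $\xi_2$ and $\xi_3$, and the heart of the proof is to show that the difference of the two traces is exactly $2(\cos^2(\varphi_2)-\cos^2(\varphi_3))$.

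The contribution of $J_1$ is handled conceptually. The endomorphism $P_1=\pi_V J_1$ is skew-symmetric on $V$, and it sends $e_0\mapsto J_1e_0=\xi_1$ and $\xi_1\mapsto -e_0$ (here one uses $\varphi_1=0$, so $J_1e_0\in V$); hence $P_1$ preserves $\spann_\R\{\xi_0,\xi_1\}$ and, being skew, it preserves the orthogonal plane $\spann_\R\{\xi_2,\xi_3\}$ as well. A skew endomorphism of a plane has equal singular values, so $|P_1\xi_2|=|P_1\xi_3|$ and the $J_1$-term is the same in both traces. For $J_2$ and $J_3$ I would carry out a direct projection computation of $P_2\xi_2,P_2\xi_3,P_3\xi_2,P_3\xi_3$ using the quaternion relations together with $e_2,e_3\perp\H e_0$ and the relation $\langle e_2,J_1e_3\rangle=0$ forced by $\langle\xi_2,\xi_3\rangle=0$ (cf.\ Remark~\ref{rem:ortho}). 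The auxiliary inner products $\langle e_2,e_3\rangle$, $\langle e_2,J_2e_3\rangle$ and $\langle e_2,J_3e_3\rangle$ do appear, but they enter $|P_j\xi_2|^2$ and $|P_j\xi_3|^2$ symmetrically and cancel in the difference, leaving
\[
|P_2\xi_2|^2-|P_2\xi_3|^2=|P_3\xi_2|^2-|P_3\xi_3|^2=\cos^2(\varphi_2)-\cos^2(\varphi_3).
\]

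Subtracting the two equal traces then yields $0=\tr\Omega(\xi_2)-\tr\Omega(\xi_3)=2\bigl(\cos^2(\varphi_2)-\cos^2(\varphi_3)\bigr)$, whence $\cos^2(\varphi_2)=\cos^2(\varphi_3)$ and $\varphi_2=\varphi_3$. The same computation runs uniformly through the degenerate angle values ($\varphi_2=0$, or $\varphi_3=\pi/2$ where $\xi_3=J_3e_3$), since Lemma~\ref{lemma:basisk34} still provides the basis in those cases. The main obstacle is purely organizational: bookkeeping the projections $P_j\xi_2,P_j\xi_3$ and checking that the three extra inner products between $e_2$ and $\H e_3$ cancel in the difference of traces. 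The conceptual point is that the eigenvalue $1$ forced by $\varphi_1=0$, together with the skew-symmetry of $P_1$, makes the $J_1$-term blind to the $\xi_2\leftrightarrow\xi_3$ swap, so the invariance of the trace forces $\varphi_2$ and $\varphi_3$ to coincide.
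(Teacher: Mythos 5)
Your proposal is correct and takes essentially the same route as the paper's own proof: both extract the adapted orthonormal basis from Lemma~\ref{lemma:basisk34}, evaluate the K\"ahler angle map at the two vectors $v_2=\xi_2$ and $v_3=\xi_3$, and use isospectrality to get $0=\tr(\Omega(v_2))-\tr(\Omega(v_3))=2\cos^2(\varphi_2)-2\cos^2(\varphi_3)$. The only cosmetic differences are that the paper disposes of the case $\varphi_2=0$ by citing the classification of subspaces with angle $(0,0,\pi/2)$ rather than running the computation uniformly as you do (your uniform treatment does go through, since $e_2=0$ there), and your skew-symmetry argument for the $P_1$-contribution replaces the paper's direct projection computation.
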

\begin{proof}
First of all, if  $\varphi_2=0$, then $\varphi_3=0$  by a combination of \cite[Proposition~9]{BB01} and the fact that subspaces with $\Phi(V)=(0,0,\pi/2)$ have dimension $3$ (see \S\ref{subsec:known_examples}).
Hence, let us assume that $\varphi_2\neq0$.
Lemma~\ref{lemma:basisk34} yields a basis $\{e_0,J_1 e_0,v_2,v_3\}$ of $V$, where $v_i=\cos(\varphi_i)J_ie_0+\sin(\varphi_i)J_ie_i$, for certain unit $e_i\in\H^n\ominus\H e_0$, $i\in\{2,3\}$. Therefore, a computation as in Equations (\ref{eq:omegaij3}), for each $i\in\{2,3\}$, gives
\begin{align*}
\Omega(v_i)_{11}&=\bigl(\cos(\varphi_2)\cos(\varphi_3)+\langle e_2,e_3\rangle\sin(\varphi_2)\sin(\varphi_3)\bigr)^2,
\\
\Omega(v_i)_{22}&=\cos(\varphi_i)^2+\langle J_3e_3,e_2\rangle^2\sin^2(\varphi_2)\sin^2(\varphi_3),
\\
\Omega(v_i)_{33}&=\cos(\varphi_i)^2+\langle J_2e_2,e_3\rangle^2\sin^2(\varphi_2)\sin^2(\varphi_3).
\end{align*}
Hence, by the isospectrality of $\Omega$,
	\[
	0=\tr(\Omega(v_2))-\tr(\Omega(v_3))=2\cos^2(\varphi_2)-2\cos^2(\varphi_3),
	\]
	from where we conclude $\varphi_2=\varphi_3$.
\end{proof}
In view of Proposition~\ref{prop:angle0}, all real subspaces $V$ of $\H^n$ with $\Phi(V)=(0,\varphi_2,\varphi_3)$ actually satisfy $\Phi(V)=(0,\varphi,\varphi)$. Note that such subspaces have been classified (see~\S \ref{subsec:known_examples}).

Thus, in the following results we will analyze the case $\varphi_1>0$.
We consider the basis of $V$ given in Lemma~\ref{lemma:basisk34}.

\begin{lemma}
	\label{complexhelp}
	Let $V\subset\H^n$ be a real subspace of dimension $4$ such that $\Phi(V)=(\varphi_1,\varphi_2,\varphi_3)$ with $\varphi_1>0$.  For each $i\in\{1,3\}$ with $\varphi_i\neq\pi/2$, we have  $\langle e_i, J_j e_j\rangle=0$ for all $j\in\{1,2,3\}$.
\end{lemma}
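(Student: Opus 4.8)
\emph{Proof proposal.} The plan is to exploit the isospectrality of the K\"ahler angle map $\Omega$ in operator form. For the unit vector $e_0$ fix the canonical basis $\{J_1,J_2,J_3\}$ and the vectors $e_1,e_2,e_3\in\H^n\ominus\H e_0$ provided by Lemma~\ref{lemma:basisk34}, so that $\xi_j:=\cos(\varphi_j)J_je_0+\sin(\varphi_j)J_je_j$, $j\in\{0,\dots,3\}$ (with $\xi_0=e_0$), form an orthonormal basis of $V$. I regard each $P_j=\pi_V J_j$ as an endomorphism of $V$, so that $\lVert P_j w\rVert^2=\Omega(w)_{jj}$ for $w\in V$. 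Because $V$ has constant quaternionic K\"ahler angle, every $\Omega(w)$ has eigenvalues $\cos^2(\varphi_1)\geq\cos^2(\varphi_2)\geq\cos^2(\varphi_3)$, so each diagonal entry is squeezed: $\cos^2(\varphi_3)\leq\lVert P_iw\rVert^2\leq\cos^2(\varphi_1)$ for every unit $w\in V$. By Lemma~\ref{lemma:qKa}, $\Omega(e_0)$ is diagonal with $\Omega(e_0)_{ii}=\cos^2(\varphi_i)$.

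The heart of the argument is to read these two facts as an extremality statement. For $i=1$ the value $\cos^2(\varphi_1)$ is the \emph{largest} eigenvalue, so $e_0$ \emph{maximizes} the quadratic form $w\mapsto\lVert P_1w\rVert^2=\langle P_1^\ast P_1w,w\rangle$ on the unit sphere of $V$; for $i=3$ (here I use $\varphi_3\neq\pi/2$) the value $\cos^2(\varphi_3)$ is the \emph{smallest} eigenvalue, so $e_0$ \emph{minimizes} the same form. In either case $e_0$ is a critical point of $\langle P_i^\ast P_iw,w\rangle$ on the sphere, hence an eigenvector of the self-adjoint operator $P_i^\ast P_i$, necessarily for the eigenvalue $\cos^2(\varphi_i)$. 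Therefore $P_i^\ast P_ie_0=\cos^2(\varphi_i)e_0$, which yields the orthogonality relation
\[
\langle P_ie_0,P_i\eta\rangle=\langle P_i^\ast P_ie_0,\eta\rangle=\cos^2(\varphi_i)\langle e_0,\eta\rangle=0
\qquad\text{for all }\eta\in V\text{ with }\eta\perp e_0.
\]

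To finish I specialize $\eta=\xi_j$, which is orthogonal to $e_0=\xi_0$. Using $P_ie_0=\cos(\varphi_i)\xi_i$, the skew-symmetry of $J_i$ together with $J_i\xi_i=-\cos(\varphi_i)e_0-\sin(\varphi_i)e_i$, and $e_i\perp\H e_0$ (so that $\langle e_i,\xi_j\rangle=\sin(\varphi_j)\langle e_i,J_je_j\rangle$), a short computation collapses everything to
\[
0=\langle P_ie_0,P_i\xi_j\rangle=\cos(\varphi_i)\langle\xi_i,J_i\xi_j\rangle=\cos(\varphi_i)\sin(\varphi_i)\sin(\varphi_j)\langle e_i,J_je_j\rangle.
\]
Since $i\in\{1,3\}$ with $\varphi_i\neq\pi/2$ gives $\cos(\varphi_i)\neq0$, while $\varphi_i>0$ (as $\varphi_1>0$ and $\varphi_3\geq\varphi_1$) together with the convention $e_j=0$ whenever $\varphi_j=0$ let me assume $\sin(\varphi_i)\sin(\varphi_j)\neq0$, I conclude $\langle e_i,J_je_j\rangle=0$ for every $j\in\{1,2,3\}$. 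I expect the only genuine subtlety to lie in the case $i=3$: a naive bound on the diagonal entry $\Omega(\xi_3)_{33}=\cos^2(\varphi_3)+\sin^2(\varphi_3)\lVert\pi_Ve_3\rVert^2$ only gives ``$\geq\cos^2(\varphi_3)$'', the wrong direction, and it is precisely the reinterpretation of the minimum as a critical point that saves the day. This also explains why $i=2$ must be excluded: there $\cos^2(\varphi_2)$ is an interior eigenvalue, $e_0$ need not extremize $\lVert P_2\,\cdot\rVert^2$, and the eigenvector conclusion fails.
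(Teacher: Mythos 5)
Your proof is correct, and it takes a genuinely different route from the paper's. The paper proves this lemma by regarding $\H^n$ as a complex Euclidean space $\C^{2n}$ with complex structure $J_i$ and invoking the structure theory of real subspaces with respect to the (complex) K\"ahler angle from~\cite{mathz}: $V$ decomposes as a $\C$-orthogonal sum of subspaces of constant K\"ahler angle, the extremality of $\varphi_1$ and $\varphi_3$ forces these angles to appear in the decomposition, and one identifies $\spann_\R\{e_0,\bar{P}_1e_0\}$ as one factor, so that $\C$-orthogonality of the factors yields $0=\langle\bar{P}_1e_0,J_1\bar{P}_je_0\rangle=\sin(\varphi_1)\sin(\varphi_j)\langle e_1,J_je_j\rangle$, and similarly for $\varphi_3$. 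You reach exactly the same orthogonality relation, but you extract it directly from the isospectrality of the K\"ahler angle map $\Omega$ of~\eqref{eq:kahler_angle_map}: since every $\Omega(w)$, $w$ unit, has spectrum $\{\cos^2(\varphi_1),\cos^2(\varphi_2),\cos^2(\varphi_3)\}$, the diagonal entries are pinched between the extreme eigenvalues, so $e_0$ is a global maximizer of $w\mapsto\Omega(w)_{11}$ and a global minimizer of $w\mapsto\Omega(w)_{33}$ on the unit sphere of $V$; the Lagrange-multiplier/eigenvector conclusion then gives $\langle P_ie_0,P_i\eta\rangle=0$ for all $\eta\in V$ orthogonal to $e_0$, and specializing $\eta=\xi_j$ collapses to the same identity $\cos(\varphi_i)\sin(\varphi_i)\sin(\varphi_j)\langle e_i,J_je_j\rangle=0$. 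Your computations (including $P_ie_0=\cos(\varphi_i)\xi_i$, $\langle e_i,\xi_j\rangle=\sin(\varphi_j)\langle e_i,J_je_j\rangle$, and the nonvanishing of all trigonometric factors under the hypotheses $\varphi_1>0$, $\varphi_i\neq\pi/2$) check out. What the two approaches buy: yours is self-contained and elementary, needing only Lemmas~\ref{lemma:qKa} and~\ref{lemma:basisk34} and the isospectrality observation already made in the paper, it treats $i=1$ and $i=3$ symmetrically, and it makes transparent why $i=2$ must be excluded (the middle eigenvalue gives no extremality, hence no critical-point conclusion); the paper's argument outsources the work to the complex K\"ahler-angle decomposition of~\cite{mathz}, which is heavier machinery but situates the lemma within a structure theory that the authors reuse elsewhere.
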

\begin{proof}
	According to Lemma~\ref{lemma:basisk34}, $e_0$ has K\"ahler angle $\varphi_i$ with respect to $V$ and $J_i\in \g{J}$ for each $i\in\{1,2,3\}$.
	Let us regard $\H^n$ as a complex Euclidean space $\C^{2n}$ whose  complex structure is $J_i$, for $i\in\{1,2,3\}$. By \cite[Theorem~2.7]{mathz} there is a non-empty finite subset $\Psi^i\subset[0,\pi/2]$ such that $V=\bigoplus_{\varphi\in \Psi^i} V^i_{\varphi}$ is a $\C$-orthonormal decomposition of $V$ and $V_\varphi^i\subset \C^{2n}$ is a real subspace with constant K\"ahler angle $\varphi\in\Psi^i$. It follows that any non-zero $v\in V^i_\varphi$ has K\"ahler angle $\varphi$ with respect to $V$ and $J_i$, and the minimum (resp.\ maximum) of $\Psi^i$ coincides with the minimum (resp.\ maximum) K\"ahler angle of a non-zero vector $v\in V$ with respect to $V$ and $J_i$.
	
	We claim that $\varphi_1\in \Psi^1$. On the one hand, if there existed $\varphi\in \Psi^1$ such that $\varphi<\varphi_1$, then there would be vectors in $V$ whose K\"ahler angle with respect to $V$ and $J_1\in\g{J}$ is $\varphi<\varphi_1$, thus contradicting the minimality of $\varphi_1$ by Lemma \ref{lemma:qKa}. On the other hand, if $\varphi>\varphi_1$ for all $\varphi\in\Psi^1$, then we would get a contradiction with the fact that $e_0$ has K\"ahler angle $\varphi_1$ with respect to $J_1$.  Analogously, we get that $\varphi_3\in \Psi^3$.

Now assume $\varphi_1\neq\pi/2$. By~\cite[p.~1190--1191]{mathz} and the discussion above, we have a decomposition $V=V_{\varphi_1}\oplus V_{\psi_1}$ into real subspaces of constant K\"{a}hler angle with respect to the complex structure $J_1$, where $\psi_1\in\Psi^1$ (the possibility $\psi_1=\varphi_1$ is allowed).
We also have that  $\bar{P}_{1}:=\pi_{V_{\varphi_1}}  J_1/\cos(\varphi_1)=\pi_V J_1/\cos(\varphi_1)\vert_{V_{\varphi_1}}$ defines a complex structure on $V_{\varphi_1}$.
As $e_0\in V_{\varphi_1}$, we get $V_{\varphi_1}=\spann_{\R}\{e_0,\bar{P}_1 e_0\}$.
Moreover, $\C V_{\varphi_1}\perp\C V_{\psi_1}$, so
$V_{\psi_1}=\spann_\R\{\bar{P}_2 e_0,\bar{P}_3 e_0\}$, and for $j\in\{2,3\}$, using Lemma~\ref{lemma:basisk34},
\[
0=\langle\bar{P}_1 e_0,J_1\bar{P}_j e_0\rangle
=\sin(\varphi_1)\sin(\varphi_j)\langle e_1,J_j e_j\rangle.
\]
Since $\varphi_1>0$, we get $\langle e_1,J_j e_j\rangle=0$.
A similar argument works for $\varphi_3$, if $\varphi_3\neq\pi/2$.
\end{proof}

Before addressing the classification, we  state a lemma that refines \cite[Lemma~5.1]{damekricci}.

\begin{lemma}\label{lemma:inequalities}
	Assume $0< \varphi_1\leq\varphi_2\leq\varphi_3\leq \pi/2$, and let $\varepsilon\in\{-1,1\}$. Then, there exists a subset $\{e_1,e_2,e_3\}$ of unit vectors of $\R^3$ with inner products
	\[
	\langle e_i, e_{i+1}\rangle= \frac{\varepsilon\cos(\varphi_{i+2})-\cos(\varphi_{i})\cos(\varphi_{i+1}) }{\sin(\varphi_{i})\sin(\varphi_{i+1})} \qquad \text{for each }i\in\{1,2,3\}
	\]
	if and only if $\cos(\varphi_1)+ \cos(\varphi_2) -\varepsilon\cos(\varphi_3)\leq 1$.
	
	Furthermore, the subspace $\spann_{\R}\{e_1,e_2,e_3\}$ has dimension $2$ if and only if $\cos(\varphi_1)+ \cos(\varphi_2) +\varepsilon\cos(\varphi_3)=1$, and dimension $3$ otherwise.
\end{lemma}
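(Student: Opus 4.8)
The plan is to translate the statement into the language of Gram matrices. Put $c_i=\cos\varphi_i$ and $s_i=\sin\varphi_i$, and let $G$ be the symmetric $3\times3$ matrix with unit diagonal and off-diagonal entries $g_{12},g_{23},g_{13}$ equal to the three prescribed inner products. A family of unit vectors $e_1,e_2,e_3\in\R^3$ realizing these inner products exists if and only if $G$ is positive semidefinite, and whenever it exists one has $\dim\spann_\R\{e_1,e_2,e_3\}=\operatorname{rank}(G)$. So both assertions reduce to the analysis of the principal minors of $G$: the three $2\times2$ minors $1-g_{ij}^2$ and the determinant $\det G=1-g_{12}^2-g_{23}^2-g_{13}^2+2g_{12}g_{23}g_{13}$.

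First I would record that the $2\times2$ minors share a common numerator. A direct computation gives $1-g_{ij}^2=Q/(s_i^2s_j^2)$ for each pair, where
\[
Q=1-c_1^2-c_2^2-c_3^2+2\varepsilon c_1c_2c_3
\]
is symmetric in the three indices; hence the conditions $|g_{ij}|\le1$ are all equivalent to the single inequality $Q\ge0$. The crux of the argument, and the step I expect to be the main obstacle, is the factorization of the determinant. Clearing denominators, one should prove the degree-four identity
\[
(s_1s_2s_3)^2\det G=(1+c_1-c_2-\varepsilon c_3)(1+c_1+c_2+\varepsilon c_3)(1-c_1-c_2+\varepsilon c_3)(1-c_1+c_2-\varepsilon c_3),
\]
where $s_i^2=1-c_i^2$. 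The product-of-four-linear-factors shape can be guessed from the classical Gram determinant of three directions and then checked by expanding both sides; the only place the sign $\varepsilon$ survives is the single monomial $8\varepsilon c_1c_2c_3$, which fixes the distribution of signs among the factors.

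Now the ordering $\varphi_1\le\varphi_2\le\varphi_3$, that is $1>c_1\ge c_2\ge c_3\ge0$ (using $\varphi_1>0$ for strictness), pins down the signs. For either value of $\varepsilon$ the three factors $1+c_1+c_2+\varepsilon c_3$, $1+c_1-c_2-\varepsilon c_3$ and $1-c_1+c_2-\varepsilon c_3$ are strictly positive: each reduces to an inequality of the form $c_i+c_j-c_k<1$ that is automatic from the ordering and $c_i<1$. Consequently $\det G$ has the sign of the remaining factor $1-c_1-c_2+\varepsilon c_3=1-(c_1+c_2-\varepsilon c_3)$, so $\det G\ge0$ if and only if $c_1+c_2-\varepsilon c_3\le1$, with equality exactly when $\det G=0$.

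It remains to see that positive semidefiniteness is governed by $\det G$ alone, i.e.\ that $c_1+c_2-\varepsilon c_3\le1$ already forces $Q\ge0$. Regarding $Q$ as a concave quadratic in $c_1$ with roots $\varepsilon c_2c_3\pm s_2s_3$, the lower root is harmless (for $\varepsilon=1$ it equals $\cos(\varphi_2+\varphi_3)\le\cos\varphi_1$, and for $\varepsilon=-1$ it is nonpositive), so it suffices to bound $c_1\le\varepsilon c_2c_3+s_2s_3$ from above. Inserting the largest admissible value $c_1=1-c_2+\varepsilon c_3$ turns this into $(1-c_2)(1+\varepsilon c_3)\le s_2s_3$; squaring and cancelling $1-c_2>0$ together with the factor $1\pm c_3$ reduces it to $c_3\le c_2$ when $\varepsilon=1$ and to $0\le c_2+c_3$ when $\varepsilon=-1$, both true by hypothesis. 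Hence the existence region is exactly $\{c_1+c_2-\varepsilon c_3\le1\}$, proving the first assertion. Finally $\operatorname{rank}(G)=3$ in the interior $c_1+c_2-\varepsilon c_3<1$, where $\det G>0$, whereas on the boundary $c_1+c_2-\varepsilon c_3=1$ one has $\det G=0$ while $Q>0$ (since $Q=0$ would force $\varphi_1=0$, which is excluded), so $\operatorname{rank}(G)=2$; this gives the dimension dichotomy and completes the proof.
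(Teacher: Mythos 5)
Your proposal is correct and follows essentially the same route as the paper's own proof: both reduce existence to positive semidefiniteness of the Gram matrix $G$ via its principal minors, use the same factorization of $(\sin\varphi_1\sin\varphi_2\sin\varphi_3)^2\det G$ into four linear factors (three of which are strictly positive under $1>\cos\varphi_1\geq\cos\varphi_2\geq\cos\varphi_3\geq 0$), and reduce the three $2\times 2$ minors to the single quantity $Q$; your quadratic-in-$\cos\varphi_1$ argument merely supplies the implication $\det G\geq 0\Rightarrow Q\geq 0$ that the paper asserts with ``one can show'', and your observation that $Q>0$ on the boundary is an equivalent substitute for the paper's explicit minor computation ruling out rank one. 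One remark: the dimension-two criterion you actually establish, $\cos(\varphi_1)+\cos(\varphi_2)-\varepsilon\cos(\varphi_3)=1$, is exactly what the paper's proof proves and what its later application in Remark~\ref{rem:smaller_n_k4} uses, so the sign $+\varepsilon\cos(\varphi_3)$ in the printed statement of the lemma is a typo in the paper, not a defect of your argument.
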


\begin{proof}
	A subset $\{e_1,e_2,e_3\}$ of the Euclidean space $\R^3$ satisfies the inner product relations in the statement if and only if the associated Gram matrix $G=(\langle e_i,e_j\rangle)_{1\leq i,j\leq 3}$ is positive semi-definite. This happens precisely when all principal minors of $G$ are non-negative; in this proof, by $G_{ij}$ we denote the matrix of order $2$ resulting from deleting the $i$-th row and the $j$-th column of $G$. Let $x_i:=\cos(\varphi_i)$ for each $i\in\{1,2,3\}$. Hence, $G$ is positive semi-definite if and only if $\det G_{ii}\geq 0$ for all $i\in\{1,2,3\}$ and $\det G\geq 0$.
We compute
\[
\det(G)=\frac{(\varepsilon+ x_1 - x_2 -x_3)(-\varepsilon+ x_1 + x_2 -x_3)(-\varepsilon+ x_1 - x_2 +x_3)(\varepsilon+ x_1 + x_2  + x_3) }{(1-x_1^2)(1-x_2^2)(1-x_3^2)}.
\]
Taking into account that $1> x_1\geq x_2\geq x_3\geq 0$, one can check that
$\det G\geq 0$ if and only if $-1+x_1+x_2-\varepsilon x_3\leq 0$.
Similarly, $\det(G_{ii})=(1-x_1^2-x_2^2-x_3^2+2\varepsilon x_1x_2x_3)/\prod_{j\neq i}(1-x_j^2)$, $i\in\{1,2,3\}$.
Hence, $\det(G_{ii})\geq 0$ if and only if
\begin{equation}\label{eq:ineq_xi}
1-x_1^2-x_2^2-x_3^2+2\varepsilon x_1x_2x_3\geq 0.
\end{equation}
Now, if $1> x_1\geq x_2\geq x_3\geq 0$, one can show that \eqref{eq:ineq_xi} holds provided that $-1+x_1+x_2-\varepsilon x_3\leq 0$. This completes the proof of the first claim of the lemma.

Assume that we are in the situation of the first assertion of the statement. Then $\{e_1,e_2,e_3\}$ spans a $3$-dimensional subspace if and only if $G$ is positive definite, which in this situation amounts to $\det G>0$. This happens precisely when $x_1+x_2-\varepsilon x_3<1$. Hence, the proof of the lemma will be complete if we show that $\spann_{\R}\{e_1,e_2,e_3\}$ cannot have dimension $1$.	Assume this is the case. Then the rank of $G$ is $1$. Hence $x_1=1-x_2+\varepsilon x_3$, and the minor $\det (G_{33})$ vanishes, i.e.
\[
0=\det (G_{33})=-\frac{2(1+\epsilon x_3)}{(1+x_2)(-2+x_2-\epsilon x_3)}.
\]
Therefore, $x_3=-\varepsilon$, which yields a contradiction. This finishes the proof.
\end{proof}

We are now in position to complete the description of $4$-dimensional real subspaces of $\H^n$ with constant quaternionic K\"ahler angle.

\begin{proposition}\label{prop:class_dim4}
	Let $V\subset \H^n$ be a real subspace of dimension $4$ and $e_0\in V$ a unit vector. Then $V$ has constant quaternionic K\"ahler angle $\Phi(V)=(\varphi_1,\varphi_2,\varphi_3)$, with $\varphi_1>0$, if and only if there is a canonical basis $\{J_1,J_2,J_3\}$ of $\g{J}$, $\varepsilon\in\{-1,1\}$, and unit vectors $e_1,e_2,e_3\in \H^n\ominus\H e_0$ with $e_i\in \H^n\ominus(\mathrm{Im}\,\H)e_j$, $i$, $j\in\{1,2,3\}$, such that
\begin{enumerate}[{\rm (i)}]
\item $0<\varphi_1\leq\varphi_2\leq\varphi_3\leq\pi/2$,
\item $\cos(\varphi_1)+ \cos(\varphi_2) -\varepsilon\cos(\varphi_3)\leq 1$,\label{prop:dim4:inequality}
\item for all $i\in\{1,2,3\}$ and indices modulo $3$,
\[\langle e_i, e_{i+1}\rangle= \frac{\varepsilon\cos(\varphi_{i+2})-\cos(\varphi_{i})\cos(\varphi_{i+1}) }{\sin(\varphi_{i})\sin(\varphi_{i+1})},
\]
\item $\{\cos(\varphi_i) J_i e_0 + \sin(\varphi_i) J_i e_i: i=0,1,2,3 \}$ is an orthonormal basis of $V$, where for simplicity we put $\varphi_0:=0$ and $J_0:=\Id$.
\end{enumerate}
	
	Moreover, if $V$ is as above, the K\"ahler angle of any non-zero $v\in V$ with respect to $J_i$ and $V$ is $\varphi_i$, for each $i\in\{1,2,3\}$.
\end{proposition}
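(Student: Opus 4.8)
The plan is to run both implications through the isospectrality of the K\"ahler angle map $\Omega$ of~\eqref{eq:kahler_angle_map}, evaluated on the adapted basis of $V$ furnished by Lemma~\ref{lemma:basisk34}. Write $v_i:=\cos(\varphi_i)J_ie_0+\sin(\varphi_i)J_ie_i$ for $i\in\{0,1,2,3\}$, with the conventions $\varphi_0=0$, $J_0=\Id$, so that condition (iv) is exactly the assertion that $\{v_0,v_1,v_2,v_3\}$ is an orthonormal basis of $V$. For a unit vector $v=\sum_{i=0}^{3}x_iv_i$, since $P_av=\pi_V(J_av)=\sum_{i,j}x_i\langle J_av_i,v_j\rangle\,v_j$, the entries of the K\"ahler angle map become the quadratic forms $\Omega(v)_{ab}=-x^\top M^aM^bx$ in $x=(x_0,\dots,x_3)$, where $M^a_{ij}:=\langle J_av_i,v_j\rangle$ is skew-symmetric. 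Hence the whole problem is governed by the three matrices $M^1,M^2,M^3$, which I would compute from the quaternion relations ($J_aJ_i=\pm J_b$ for $a\neq i$, $J_a^2=-\Id$) together with the $\H e_0$-orthonormality of $\{e_0,J_1e_0,J_2e_0,J_3e_0\}$. Using the hypothesis $e_i\in\H^n\ominus(\mathrm{Im}\,\H)e_j$ (equivalently $\langle e_i,J_ke_j\rangle=0$ for $i\neq j$), all the mixed terms collapse and one is left with entries built only from $\cos(\varphi_i)$, the quaternionic structure constants, and the three numbers $\langle e_i,e_{i+1}\rangle$.

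For necessity I would start from $\Phi(V)=(\varphi_1,\varphi_2,\varphi_3)$ with $\varphi_1>0$ and apply Lemma~\ref{lemma:basisk34} to the chosen $e_0$; this produces the canonical basis $\{J_1,J_2,J_3\}$, the unit vectors $e_i\in\H^n\ominus\H e_0$, and the orthonormal basis (iv), so (i) and (iv) hold at once. The orthogonality relations $e_i\in\H^n\ominus(\mathrm{Im}\,\H)e_j$ follow for most index pairs from Remark~\ref{rem:ortho} and from Lemma~\ref{complexhelp} (applied with $i\in\{1,3\}$ and $\varphi_i\neq\pi/2$); the few relations not covered this way, which involve the middle vector $e_2$, I would obtain by evaluating $\Omega$ on suitable unit vectors and invoking the min--max bounds $\cos^2(\varphi_3)\le\Omega(w)_{aa}\le\cos^2(\varphi_1)$, exactly as in the three-dimensional case of Proposition~\ref{prop:sk=3}. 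With the orthogonality in hand, imposing that $\Omega(v)$ have the prescribed spectrum $\{\cos^2(\varphi_i)\}$ for all $v$ forces each $\langle e_i,e_{i+1}\rangle$ to equal $(\delta_i\cos(\varphi_{i+2})-\cos(\varphi_i)\cos(\varphi_{i+1}))/(\sin(\varphi_i)\sin(\varphi_{i+1}))$ for signs $\delta_i\in\{\pm1\}$; compatibility of the three relations (i.e.\ positive semidefiniteness of the Gram matrix of $\{e_1,e_2,e_3\}$, automatic since the $e_i$ are genuine unit vectors) forces a common sign $\varepsilon$, giving (iii). Finally, (ii) is then immediate from Lemma~\ref{lemma:inequalities}, since the existence of unit vectors with the inner products (iii) is equivalent to $\cos(\varphi_1)+\cos(\varphi_2)-\varepsilon\cos(\varphi_3)\le1$.

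For sufficiency, and for the final \emph{moreover}, I would feed the data (i)--(iv) into the computation of $\Omega(v)$ described above: substituting the values (iii) into the matrices $M^a$ and expanding $\Omega(v)_{ab}=-x^\top M^aM^bx$, one checks that the characteristic polynomial of $\Omega(v)$ equals $(t-\cos^2(\varphi_1))(t-\cos^2(\varphi_2))(t-\cos^2(\varphi_3))$ independently of $x$ on the unit sphere $\sum x_i^2=1$. By the isospectrality property of the K\"ahler angle map this is precisely $\Phi(V)=(\varphi_1,\varphi_2,\varphi_3)$. The same computation yields the diagonal entries $\Omega(v)_{aa}=\cos^2(\varphi_a)$; since a symmetric matrix whose diagonal agrees with its spectrum as a multiset must be diagonal (compare $\tr(\Omega(v)^2)=\sum_a\Omega(v)_{aa}^2+2\sum_{a<b}\Omega(v)_{ab}^2$ with $\sum_i\cos^4(\varphi_i)$), the off-diagonal entries vanish for every $v$, which is exactly the assertion that each $v\in V$ has K\"ahler angle $\varphi_i$ with respect to $J_i$ and $V$.

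The main obstacle is the isospectral identity in the sufficiency step: showing that the cubic characteristic polynomial of $\Omega(v)$ is constant in $x$ requires the precise inner products (iii) to make all cross terms cancel, and it must be carried out with care in the degenerate ranges where one or more of the $\varphi_i$ equal $\pi/2$ (so that the corresponding $\bar{P}_i$ ceases to be a complex structure on $V$ and the relevant row and column of the $M^a$ drop out). In the necessity direction the analogous difficulty is extracting the exact formula (iii), together with the sign $\varepsilon$, from the full eigenvalue constraints rather than merely from the trace.
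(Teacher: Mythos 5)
Your overall strategy coincides with the paper's: take the adapted basis from Lemma~\ref{lemma:basisk34}, exploit isospectrality of the K\"ahler angle map $\Omega$ (your matrix formulation $\Omega(v)_{ab}=-x^\top M^aM^bx$ is a repackaging of the paper's entry-by-entry computations), derive the inner products (iii) from determinant and trace identities, and obtain (ii) from Lemma~\ref{lemma:inequalities}; the sufficiency direction is the same direct verification. However, there is a genuine gap at the step where you pass from the three relations with independent signs $\delta_i$ to a common sign $\varepsilon$. You attribute this to ``compatibility of the three relations (i.e.\ positive semidefiniteness of the Gram matrix of $\{e_1,e_2,e_3\}$, automatic since the $e_i$ are genuine unit vectors)''. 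As you yourself note, positive semidefiniteness is automatic here, so it cannot force anything; and indeed mixed signs are perfectly compatible with a positive definite Gram matrix. For instance, with $\varphi_1=\varphi_2=\varphi_3=\varphi$ and $\cos\varphi$ small, the sign pattern $(\delta_1,\delta_2,\delta_3)=(+1,-1,+1)$ produces inner products close to $\cos\varphi$, $-\cos\varphi$, $\cos\varphi$, whose Gram matrix is positive definite, so vectors $e_1,e_2,e_3$ realizing mixed signs do exist --- but the resulting $V$ fails to have constant quaternionic K\"ahler angle. What actually rules out mixed signs is isospectrality at further vectors of $V$: the paper checks that if adjacent relations carry opposite signs then $\det\bigl(\Omega\bigl((e_0+\bar{P}_{i+1}e_0)/\sqrt{2}\bigr)\bigr)=0$, contradicting $\varphi_3\neq\pi/2$ (all eigenvalues $\cos^2(\varphi_j)$ are then positive). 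Your proof needs this, or an equivalent extra evaluation of $\Omega$; the Gram-matrix argument cannot replace it.

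A second, smaller issue: you flag the degenerate range $\varphi_3=\pi/2$ as an obstacle but do not resolve it, whereas it occupies roughly half of the paper's proof of necessity. In that case $\det\Omega(v)$ vanishes identically, so the determinant identities that yield (iii) in the generic case degenerate, and the paper runs a separate chain of trace and determinant computations (producing $\langle e_2,J_1e_1\rangle=0$, $\langle e_1,e_2\rangle=-\cot(\varphi_1)\cot(\varphi_2)$, $\langle e_3,J_ie_i\rangle=0$, and so on) together with its own sign-consistency argument: $\varepsilon'=\varepsilon$ is forced because otherwise $0$ would be a double eigenvalue of $\Omega\bigl((e_0+J_3e_3)/\sqrt{2}\bigr)$, contradicting $\varphi_2\neq\pi/2$. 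Likewise, the case $\varphi_2=\pi/2$ is dispatched by appeal to the known classification of subspaces with $\Phi(V)=(\varphi,\pi/2,\pi/2)$ rather than by the general computation. As written, your plan correctly identifies where the difficulties lie, but leaves precisely those steps unproved.
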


\begin{proof}
In order to prove the necessity, let us assume that  $V$ is spanned by the basis described in Lemma \ref{lemma:basisk34} with $k=4$.  Notice that  $\varphi_2=\pi/2$ implies $\Phi(V)=(\varphi,\pi/2,\pi/2)$; such subspaces are classified (see \S \ref{subsec:known_examples}), and \cite[p.~232]{BB01}, together with some straightforward calculations, show that they can be spanned by a basis as above.  Thus, we can suppose $\varphi_1,\varphi_2\in (0,\pi/2)$.

Let us first assume $\varphi_3\neq \pi/2$. A long but elementary calculation, similar to the one used to obtain Equations~(\ref{eq:omegaij3}), using the isospectrality of $\Omega$, Remark~\ref{rem:ortho} and Lemma~\ref{complexhelp}, yields
\begin{align*}
\prod_{j=1}^3 \cos^2(\varphi_j)&= \det(\Omega(\bar{P}_i e_0))= \cos^2(\varphi_i)\prod_{\substack{j=1\\j\neq i}}^3 (\cos(\varphi_i)\cos(\varphi_{j}) + \langle e_i, e_{j}\rangle \sin(\varphi_i)\sin(\varphi_{j}))^2,
\end{align*}
for each $i\in\{1,2,3\}$. This implies
\begin{equation}\label{eq:relation_cosines}
\cos^2(\varphi_{i+2})=\left(\cos \left(\varphi _i\right) \cos(\varphi _{i+1}) + \left\langle e_i,e_{i+1}\right\rangle \sin \left(\varphi _i\right) \sin \left(\varphi _{i+1}\right) \right){}^2, \qquad i\in\{1,2,3\}.
\end{equation}
Using \eqref{eq:relation_cosines}, we can also calculate for $i\in\{1,3\}$
\[
\sum_{j=1}^3 \cos^2(\varphi_j)=\tr(\Omega(\bar{P}_i e_0))=\sum_{j=1}^3 \cos^2(\varphi_j) + \langle  e_2, J_i e_i\rangle^2 \sin^2(\varphi_i)\sin^2(\varphi_2),
\]	
which implies $\langle e_2, J_i e_i\rangle=0$, $i\in\{1,3\}$. This, along with Remark~\ref{rem:ortho} and Lemma~\ref{complexhelp}, shows that $e_i\in\H^n\ominus(\mathrm{Im}\,\H)e_j$, $i,j\in\{1,2,3\}$. Furthermore, \eqref{eq:relation_cosines} gives rise to the two possible expressions for $\langle e_i,e_{i+1}\rangle$ in the statement (corresponding to $\varepsilon=1$ or $\varepsilon=-1$). Note that such expressions are incompatible for a fixed $V$, that is, if for some $i\in\{1,2,3\}$ we have
\begin{align*}
\langle e_i, e_{i+1}\rangle
&{}= \frac{\cos(\varphi_{i+2})-\cos(\varphi_{i})\cos(\varphi_{i+1})}
{\sin(\varphi_{i})\sin(\varphi_{i+1})},  
&\langle e_{i+1}, e_{i+2}\rangle
&{}=-\frac{\cos(\varphi_{i})+\cos(\varphi_{i+1})\cos(\varphi_{i+2})}
{\sin(\varphi_{i+1})\sin(\varphi_{i+2})},
\end{align*}
then one can check that $\det\left(\Omega((e_0 + \bar{P}_{i+1} e_0)/\sqrt{2})\right)=0$, which gives a contradiction with the assumption $\varphi_3\neq \pi/2$. Finally, the inequality in item~(\ref{prop:dim4:inequality}) of the statement follows from Lemma~\ref{lemma:inequalities}.

Now assume that $\varphi_3=\pi/2$. Let $\{e_0, \bar{P}_1 e_0, \bar{P}_2 e_0, J_3 e_3\}$ be the orthonormal basis provided by Lemma~\ref{lemma:basisk34}. A similar computation as in  (\ref{eq:omegaij3}), using the isospectrality of $\Omega$ and Lemma~\ref{complexhelp}, 
yields
\begin{align*}
\sum_{i=1}^2 \cos^2(\varphi_i)={}
&{}\tr(\Omega(\bar{P}_1 e_0)) +\tr(\Omega(\bar{P}_2 e_0)) - \tr(\Omega(J_3 e_3))
\\
=&{}\cos^2(\varphi_1) + \cos^2(\varphi_2) + 2 \sin ^2(\varphi _1) \sin^2(\varphi _2) \langle J_1e_1,e_2\rangle^2
\\[1ex]
&{}+2\left(\cos(\varphi _1) \cos(\varphi _2)+\sin(\varphi _1) \sin(\varphi _2) \langle e_1,e_2\rangle\right)^2.
\end{align*}
Then,
\begin{equation}\label{eq:e2J1e2}
\langle e_2, J_1 e_1\rangle=0 \quad \text{ and }\quad \langle e_1, e_2 \rangle=- \cot(\varphi_1)\cot(\varphi_2).
\end{equation}
Also, using~\eqref{eq:e2J1e2}, if $i\in\{1,2\}$ we get
\[ 
0=\det\left(\Omega\left(\frac{1}{\sqrt{2}} e_0 + \frac{1}{\sqrt{2}} \bar{P}_i e_0\right)\right)
=\frac{1}{4} \langle e_3, J_i e_i\rangle^2 \cos^2(\varphi_1) \cos^2(\varphi_2)\sin^2(\varphi_i).   \]
Thus,
\begin{equation}\label{eq:e3Jiei}
\langle e_3, J_i e_i\rangle=0, \quad i\in\{1,2\}.
\end{equation}
Taking into account~\eqref{eq:e2J1e2} and~\eqref{eq:e3Jiei}, we can calculate
\begin{align*}\sum_{i=1}^2 \cos^2(\varphi_i)&=\tr(\Omega(\bar{P}_1 e_0))=\cos^2(\varphi_1) + \langle e_1, e_3\rangle^2\sin^2(\varphi_1),
\\
\sum_{i=1}^2 \cos^2(\varphi_i)&=\tr(\Omega(\bar{P}_2 e_0))=\cos^2(\varphi_2) + \sin^2(\varphi_2)(\langle e_2, e_3\rangle^2 +\langle e_2, J_3 e_3\rangle^2),
\end{align*}
whence
\begin{equation}\label{eq:e1e3}\langle e_1, e_3\rangle=\varepsilon\cos(\varphi_2)/\sin(\varphi_1) \quad  \text{and}\quad \cos^2(\varphi_1)=\sin^2(\varphi_2)(\langle e_2, e_3\rangle^2 +\langle e_2, J_3 e_3\rangle^2),
\end{equation}
for some $\varepsilon\in\{-1,1\}$.
Using these relations we compute
\[\sum_{i=1}^2 \cos^2(\varphi_i)=\tr \left(\Omega\left(\frac{1}{\sqrt{2}} \bar{P}_1 e_0 + \frac{1}{\sqrt{2}} \bar{P}_2 e_0\right)\right)=\sum_{i=1}^2 \cos^2(\varphi_i) + \varepsilon \frac{1}{2}  \langle  e_2, J_3 e_3\rangle \sin(2\varphi_2),\]
from where (note that we are assuming $\varphi_2\neq\pi/2$)
\begin{equation}\label{eq:e2J3e3}
\langle e_2, J_3 e_3\rangle=0 \quad\text{and}\quad  \langle e_2, e_3\rangle=\varepsilon'\cos(\varphi_1)/\sin(\varphi_2),
\end{equation}
for some $\varepsilon'\in\{-1,1\}$. Remark~\ref{rem:ortho}, Lemma~\ref{complexhelp} and Equations~\eqref{eq:e2J1e2}, \eqref{eq:e3Jiei}, \eqref{eq:e1e3} and~\eqref{eq:e2J3e3} imply $e_i\in\H^n\ominus(\mathrm{Im}\,\H)e_j$, $i,j\in\{1,2,3\}$. Furthermore, if $\varepsilon'=-\varepsilon$,
we have that $0$ is an eigenvalue of $\Omega\left((e_0 +J_3 e_3)/\sqrt{2}\right)$ with double multiplicity, yielding a contradiction with the fact $\varphi_2\neq \pi/2$. Hence, $\varepsilon'=\varepsilon$ which, along with Lemma~\ref{lemma:inequalities}, concludes the proof of the necessity in the statement.

The converse implication follows from verifying by direct calculation that the matrix $\Omega(v)$ is diagonal with diagonal entries $\cos^2(\varphi_1)$, $\cos^2(\varphi_2)$, $\cos^2(\varphi_3)$, for any unit $v$ spanned by the basis of $V$ given in the statement. This also proves the final claim of the proposition.
\end{proof}

\begin{remark}\label{rem:V+-}
In view of Proposition~\ref{prop:class_dim4}, there can be zero, one or two  types of $4$-dimensional real subspaces $V$ of $\H^n$ with $\Phi(V)=(\varphi_1,\varphi_2,\varphi_3)$, $\varphi_1>0$, depending on whether the triple $(\varphi_1,\varphi_2,\varphi_3)$ satisfies $\cos(\varphi_1)+ \cos(\varphi_2) -\cos(\varphi_3)> 1$, $\cos(\varphi_1)+ \cos(\varphi_2) -\cos(\varphi_3)\leq 1< \cos(\varphi_1)+ \cos(\varphi_2) +\cos(\varphi_3)$, or $\cos(\varphi_1)+ \cos(\varphi_2) +\cos(\varphi_3)\leq 1$, respectively. Thus, it will be convenient to denote by $V_+$ and $V_-$ the subspaces described in Proposition~\ref{prop:class_dim4} with $\varepsilon=1$ or $\varepsilon=-1$, respectively. Note that such subspaces depend on the triple $(\varphi_1,\varphi_2,\varphi_3)$, but we do not specify this in the notation for the sake of simplicity.

Observe that, if $\varphi_3=\pi/2$, $V_+$ and $V_-$ are actually equivalent, i.e.\ there exists $T\in \Sp(1)\Sp(n)$ such that $TV_+=V_-$. Indeed, one can take a $T\in \Sp(1)\Sp(n)$ that commutes with $J_i$ for all $i\in\{1,2,3\}$, fixes each $e_i$ with $i\in\{0,1,2\}$, and sends $e_3$ to $-e_3$. For convenience, from now on we will say that any $4$-dimensional real subspace of $\H^n$ with constant quaternionic K\"ahler angle $(\varphi_1,\varphi_2,\pi/2)$ is of type $V_+$, and not of type $V_-$.

In order to encompass all examples of $4$-dimensional subspaces with constant quaternionic K\"ahler angle into the $V_\pm$-notation, we have to consider the case $\varphi_1=0$ analyzed in Proposition~\ref{prop:angle0}. Thus, we adopt the convention that any $4$-dimensional real subspace $V$ with $\Phi(V)=(0,\varphi,\varphi)$, $\varphi\in[0,\pi/2]$, is of type $V_+$, and not of type $V_-$.
\end{remark}

\begin{remark}\label{rem:P1P2}
The choice of the $\pm$-notation in Remark~\ref{rem:V+-} is motivated by certain important property of these subspaces that we now explain. Assume $\varphi_3\neq \pi/2$. The last claim of Proposition~\ref{prop:class_dim4} enables us to reproduce the discussion in~\S\ref{subsec:factorization} applied to $V=V_\pm$, and hence, $\bar{P}_i=\pi_{V_\pm}J_i/\cos(\varphi_i)$, $i\in\{1,2,3\}$, determine a $\mathsf{Cl}(3)$-module structure on $V_\pm$, which must be irreducible since $\dim_\R V_\pm=4$. By the classification of Clifford modules, either $\bar{P}_1\bar{P}_2=\bar{P}_3$ (and hence $\bar{P}_i\bar{P}_{i+1}=\bar{P}_{i+2}$ for all $i\in\{1,2,3\}$) or $\bar{P}_1\bar{P}_2=-\bar{P}_3$  (and hence $\bar{P}_i\bar{P}_{i+1}=-\bar{P}_{i+2}$ for all $i\in\{1,2,3\}$). One can easily check using the basis of $V_\pm$ in Proposition~\ref{prop:class_dim4} that $V_+$ satisfies precisely the former relation, whereas $V_-$ satisfies the latter.
\end{remark}

\begin{remark}\label{rem:smaller_n_k4}
	Let $V$ be a real subspace of dimension $4$ in $\H^n$, $n\ge 4$,  with $\Phi(V)=(\varphi_1,\varphi_2,\varphi_3)$. If $\varphi_1=0$,  by Proposition \ref{prop:angle0} we have $\Phi(V)=(0,\varphi,\varphi)$ for some $\varphi\in[0,\pi/2]$. In this case, when $\varphi=0$, $V$ is quaternionic, i.e.~ $V=\H v$, for some non-zero vector $v\in V$, whereas if $\varphi> 0$, $V$ cannot fit inside a quaternionic line $\H$, but can be placed in some $\H^2$ (see \S \ref{subsec:known_examples}), and thus $\H V= \H^2$.
	
	Now assume $\varphi_1 >0$. By Proposition \ref{prop:class_dim4} and Lemma~\ref{lemma:inequalities}, $V$ can be placed in some $\H^3$ if and only if $V=V_{+}$ and $\cos(\varphi_1) + \cos(\varphi_2) - \cos(\varphi_3)=1$, or if $V=V_{-}$ and $\cos(\varphi_1) + \cos(\varphi_2) + \cos(\varphi_3)=1$; in this case $\H V=\H^3$. Otherwise we have $\H V=\H^4$.
\end{remark}

We end this section by showing that $V_+$ and $V_-$ are not equivalent.
\begin{proposition}\label{prop:ineqk=4}
	There does not exist $T\in \Sp(1)\Sp(n)$ such that $TV_+=V_-$.
\end{proposition}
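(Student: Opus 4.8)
The plan is to exploit the algebraic distinction between $V_+$ and $V_-$ recorded in Remark~\ref{rem:P1P2}. By Remark~\ref{rem:V+-}, any subspace with $\varphi_3=\pi/2$ is by convention of type $V_+$, and the type $V_-$ only occurs for $\varphi_1>0$; hence the only meaningful case is $0<\varphi_1\leq\varphi_2\leq\varphi_3<\pi/2$, where both $V_+$ and $V_-$ carry the $\mathsf{Cl}(3)$-module structures of Remark~\ref{rem:P1P2}. To each such subspace $V$, equipped with an adapted canonical basis $\{J_1,J_2,J_3\}$ of $\g{J}$ (one realizing the K\"ahler angle $\varphi_i$ with respect to $J_i$ for every unit vector, as provided by Proposition~\ref{prop:class_dim4}), I attach the \emph{volume element} $\omega_V:=\bar{P}_1\bar{P}_2\bar{P}_3\in\mathrm{End}_\R(V)$, where $\bar{P}_i=\pi_VJ_i/\cos(\varphi_i)$. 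By Remark~\ref{rem:P1P2} one has $\bar{P}_1\bar{P}_2=\bar{P}_3$ for $V_+$ and $\bar{P}_1\bar{P}_2=-\bar{P}_3$ for $V_-$, so $\omega_{V_+}=-\Id$ while $\omega_{V_-}=+\Id$. It therefore suffices to show that $\omega_V$ is preserved by the action of $\Sp(1)\Sp(n)$.

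The key point, and the main obstacle, is to verify that $\omega_V$ does not depend on the choice of adapted canonical basis. Since every adapted basis diagonalizes the same canonically defined symmetric form on $\g{J}$, any two of them are related by an element $A\in\SO(3)$ preserving the decomposition of $\g{J}$ into common K\"ahler-angle eigenspaces; on each eigenblock the common factor $\cos(\varphi)$ cancels and one obtains $\bar{P}_i\mapsto\bar{P}_i'=\sum_k A_{ik}\bar{P}_k$. As shown in Subsection~\ref{subsec:factorization}, the $\bar{P}_i$ anticommute and square to $-\Id$, hence generate a $\mathsf{Cl}(3)$-action on $V$; the transformation induced by $A$ is an orthogonal change of generators, which extends to an algebra automorphism under which the volume element scales by $\det A$. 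As $\det A=1$, we get $\omega_V'=\bar{P}_1'\bar{P}_2'\bar{P}_3'=\det(A)\,\omega_V=\omega_V$. Thus $\omega_V\in\{\pm\Id\}$ is a genuine invariant of $V$, and the sign distinguishes the two irreducible $\mathsf{Cl}(3)$-module types $V^0$ and $V^1$.

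Finally I would run the intertwining argument. Suppose, for contradiction, that $T\in\Sp(1)\Sp(n)$ satisfies $TV_+=V_-$. By the description of the action of $\Sp(1)\Sp(n)$ there is a canonical basis $\{J_1',J_2',J_3'\}$ with $TJ_i=J_i'T$, and since $T$ is orthogonal with $TV_+=V_-$ it maps $V_+^{\perp}$ onto $V_-^{\perp}$, so $T\pi_{V_+}=\pi_{V_-}T$. Writing $\bar{P}_i^{+}=\pi_{V_+}J_i/\cos(\varphi_i)$ and $\bar{P}_i^{-}=\pi_{V_-}J_i'/\cos(\varphi_i)$, these two facts yield $T\bar{P}_i^{+}=\bar{P}_i^{-}T$ for each $i$. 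Because $T$ is an isometry, for $v\in V_+$ one has $\lVert\pi_{V_-}J_i'(Tv)\rVert=\lVert T\pi_{V_+}J_iv\rVert=\cos(\varphi_i)\lVert Tv\rVert$, so the K\"ahler angle of every vector of $V_-$ with respect to $J_i'$ and $V_-$ is $\varphi_i$; hence $\{J_1',J_2',J_3'\}$ is an adapted canonical basis for $V_-$, and by the invariance established above $\bar{P}_1^{-}\bar{P}_2^{-}\bar{P}_3^{-}=\omega_{V_-}=+\Id$. Composing the relations $T\bar{P}_i^{+}=\bar{P}_i^{-}T$ gives $T\,\omega_{V_+}=\omega_{V_-}\,T$ as maps $V_+\to V_-$, and since $T$ is invertible this forces $\omega_{V_+}=\omega_{V_-}$, i.e.\ $-\Id=+\Id$, a contradiction. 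Therefore no such $T$ exists.
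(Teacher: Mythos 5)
Your proposal is correct and is essentially the paper's own proof: both arguments conjugate the canonical basis by $T$ (giving $J_i'=TJ_iT^{-1}$, so that $T\pi_{V_+}=\pi_{V_-}T$ yields $T\bar{P}_i^+=\bar{P}_i^-T$), both observe that $\{J_1',J_2',J_3'\}$ is again adapted to $V_-$ and hence differs from the original adapted basis by some $A\in\SO(3)$ commuting with the diagonal matrix of angles, and both reach the contradiction by transporting the Clifford relation $\bar{P}_1\bar{P}_2=\pm\bar{P}_3$ through $T$. The only cosmetic difference is that you package the invariant as the volume element $\bar{P}_1\bar{P}_2\bar{P}_3\in\{\pm\Id\}$ and justify its basis-independence by the $\det A=1$ scaling, whereas the paper obtains the same fact by rewriting the basis of $V_-$ in terms of the conjugated complex structures and invoking Remark~\ref{rem:P1P2}; your rendering of that step is, if anything, slightly more explicit.
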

\begin{proof}
We can assume that $\Phi(V_+)=\Phi(V_-)$ since the quaternionic K\"ahler angle is preserved by transformations in $\Sp(1)\Sp(n)$. We also assume $\varphi_3\neq\pi/2$ in view of Remark~\ref{rem:V+-}. We consider the bases for $V_\pm$ given in Proposition~\ref{prop:class_dim4}, where we use the notation $e_i^\pm$ accordingly, and assume without restriction of generality that the canonical basis $\{J_1,J_2,J_3\}$ of $\g{J}$ used is the same in both cases.

Let us suppose that there is $T\in \Sp(1)\Sp(n)$ such that $TV_+=V_-$. Denote by $\pi_+$ and $\pi_-$ the orthogonal projections onto $V_+$ and $V_-$, respectively. By assumption $T\pi_+=\pi_-T$. Let $\{J_1',J_2',J_3'\}$ be the canonical basis of $\g{J}$ given by $J_i'=TJ_iT^{-1}$, $i\in\{1,2,3\}$, and denote $P_i^+=\pi_+J_i$ and $P_i'=\pi_- J_i'$, $i\in\{1,2,3\}$. Then, for any unit vector $w \in V_-$ and $i,j\in\{1,2,3\}$, we have
	\begin{align*}
	\langle P_i'w,P_j'w\rangle&=\langle \pi_-TJ_iT^{-1}w,\pi_-TJ_jT^{-1}w\rangle = \langle T\pi_+J_iT^{-1}w,T\pi_+J_jT^{-1}w\rangle \\
	&= \langle TP_i^+T^{-1}w,TP_j^+T^{-1}w\rangle= \langle P_i^+T^{-1}w, P_j^+ T^{-1}w\rangle=\cos^2(\varphi_i)\delta_{ij},
	\end{align*}
	where in the last equality we have used the last claim of Proposition~\ref{prop:class_dim4} applied to $V_+$.
	
	Thus, the canonical basis $\{J_1',J_2',J_3'\}$ of $\g{J}$ diagonalizes the bilinear form $L^-_w$ (given in Proposition~\ref{lemma:qKa}) associated with the subspace $V_-$, for any unit vector $w\in V_-$.
	By the last claim of Proposition~\ref{prop:class_dim4} applied to $V_-$, the basis $\{J_1,J_2,J_3\}$ also has this property. Hence, there exists an orthogonal matrix $A\in \SO(3)$ such that
	\[
	(J_1',J_2',J_3')=(J_1,J_2,J_3)A
	\]
	and $A$ commutes with the diagonal matrix with diagonal entries $(\varphi_1,\varphi_2,\varphi_3)$.
	Then $V_-$ coincides with the span of
	\[\{e_0^-, \cos(\varphi_1) J_1' e_0^- + \sin(\varphi_1) J_1' e_1^-,\cos(\varphi_2) J_2' e_0^- + \sin(\varphi_2) J_2' e_2^-, \cos(\varphi_3) J_3' e_0^- + \sin(\varphi_3) J_3' e_3^- \},\]
	where in this basis we have just changed $J_i$ by $J_i'$ in the original basis of $V_-$. Since for $V_-$ we had $\bar{P}_1^-\bar{P}_2^-=-\bar{P}_3^-$ by Remark~\ref{rem:P1P2}, where $\bar{P}_i^-=\pi_-J_i/\cos(\varphi_i)$, $i\in\{1,2,3\}$, we also have $\bar{P}'_1\bar{P}'_2=-\bar{P}'_3$, where $\bar{P}_i'=P_i'/\cos(\varphi_i)=\pi_-J_i'/\cos(\varphi_i)$, $i\in\{1,2,3\}$.
	
	However, denoting $\bar{P}_i^+=\pi_+J_i/\cos(\varphi_i)$, $i\in\{1,2,3\}$, which again by Remark~\ref{rem:P1P2} satisfy $\bar{P}_1^+\bar{P}_2^+=\bar{P}_3^+$, we obtain:
\begin{align*}
\bar{P}_1'\bar{P}_2'&=\frac{1}{\cos(\varphi_1)\cos(\varphi_2)}\pi_-J_1'\pi_-J_2'
=\frac{1}{\cos(\varphi_1)\cos(\varphi_2)}\pi_-TJ_1T^{-1}\pi_-TJ_2T^{-1}
\\
&=\frac{1}{\cos(\varphi_1)\cos(\varphi_2)}T\pi_+J_1\pi_+J_2T^{-1}=T\bar{P}_1^+\bar{P}_2^+T^{-1}
=T\bar{P}_3^+T^{-1}
\\
&=\frac{1}{\cos(\varphi_3)}T\pi_+J_3T^{-1} = \frac{1}{\cos(\varphi_3)}\pi_-TJ_3T^{-1}=
\frac{1}{\cos(\varphi_3)}\pi_-J_3'=\bar{P}_3',
\end{align*}
	which leads to a contradiction with  $\bar{P}'_1\bar{P}'_2=-\bar{P}'_3$.
\end{proof}

\section{Inhomogeneous isoparametric hypersurfaces\\ with constant principal curvatures}\label{inhomo}

In this section we will investigate when an $\H$-orthogonal sum of copies of the subspaces $V_\pm$ introduced in the previous section gives rise to a protohomogeneous real subspace of~$\H^n$. In particular, we will obtain subspaces with constant quaternionic K\"ahler angle that are not protohomogeneous. As a consequence of \cite[Theorem 4.5]{damekricci}  these subspaces give rise to examples of inhomogeneous isoparametric hypersurfaces with constant principal curvatures in quaternionic hyperbolic spaces.

Let us consider a real subspace $V$ of $\H^n$ such that
\begin{enumerate}[{\rm (C1)}]
	\item $V=\bigoplus_{r=1}^{l} V_r$, where
	\item $\dim_\R V_r=4$, for each $r\in\{1,\dots,l\}$,
	\item $V_r$ and $V_s$ are $\H$-orthogonal for every $r,s\in\{1,\dots,l\}$, $r\neq s$,
	\item $\Phi(V_r)=(\varphi_1,\varphi_2,\varphi_3)$, for all $r\in\{1,\dots,l\}$,
	\item $\{J_1,J_2,J_3\}$ is a canonical basis of $\g{J}$ such that every non-zero vector in $V_r$, $r\in\{1,\dots, l\}$, has K\"ahler angle $\varphi_i$ with respect to $J_i$ for each $i\in\{1,2,3\}$.
\end{enumerate}
Then, Lemma~\ref{lemma:factorization} guarantees that $\Phi(V)=(\varphi_1,\varphi_2,\varphi_3)$.
By Proposition~\ref{prop:class_dim4}, Remark~\ref{rem:V+-} and Proposition~\ref{prop:ineqk=4}, each factor $V_r$ is either equivalent  to $V_{+}$ or to $V_{-}$. Then, if we define $l_+$ and $l_-$ as the number of subspaces in the decomposition of $V$ equivalent to $V_+$ and to $V_-$, respectively, we have $l=l_++l_-$. In this situation we will say that the real subspace $V$ has \emph{type $(l_+,l_-)$}.

We claim that the type of $V$ is well defined for real subspaces of $\H^n$ in the conditions (C1-5) above. If $\varphi_3=\pi/2$, then by Remark~\ref{rem:V+-} the type of $V$ is $(l,0)$. Let us assume that $\varphi_3\neq\pi/2$. As usual, we let
$\bar{P}_i=\pi_V J_i/\cos(\varphi_i)$,  for each $i\in\{1,2,3\}$; since $\H V_r\perp \H V_s$ for $r\neq s$, we have $\bar{P}_i\vert_{V_r}=\pi_{V_r}J_i/\cos(\varphi_i)$. Thus, it follows from Remark~\ref{rem:P1P2} that $\bar{P}_1 \bar{P}_{2}\vert_{V_r}=\bar{P}_{3}\vert_{V_{r}}$ or  $\bar{P}_1 \bar{P}_{2}\vert_{V_r}=-\bar{P}_{3}\vert_{V_{r}}$, depending on whether $V_r$ is equivalent to $V_+$ or $V_-$, respectively. Hence,  $\dim_\R \Ker(\bar{P}_1 \bar{P}_{2} - \bar{P}_{3})=4l_{+}$ and  $\dim_\R \Ker(\bar{P}_1 \bar{P}_{2} + \bar{P}_{3})=4l_{-}$. Moreover, the type is independent of the canonical basis of $\g{J}$ chosen. Indeed, if $\{J_1',J_2',J_3'\}$ is another canonical basis satisfying (C5), then there exists an orthogonal matrix $A\in\SO(3)$ such that $(J_1',J_2',J_3')=(J_1,J_2,J_3)A$ and commuting with the diagonal matrix with diagonal entries $(\varphi_1,\varphi_2,\varphi_3)$, and one can easily argue (similarly as in the proof of Proposition~\ref{prop:ineqk=4}) that $\Ker(\bar{P}_1' \bar{P}_{2}' \pm \bar{P}_{3}')=\Ker(\bar{P}_1 \bar{P}_{2} \pm \bar{P}_{3})$. All in all, the type is well defined. More than that, a slight modification of the previous argument shows that two real subspaces $V$ and $W$ of $\H^n$ in the conditions (C1-5) are equivalent if and only if $\Phi(V)=\Phi(W)$ and their types coincide.

\begin{proposition}\label{prop:protosum}
	Let $V$ be a real subspace of $\H^n$ satisfying conditions \emph{(C1-5)}. Then, $V$ is protohomogeneous if and only if the type of $V$ is $(l,0)$ or $(0,l)$.
\end{proposition}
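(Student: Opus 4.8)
The plan is to prove the two implications separately, exploiting the fact that for a subspace of type $(l,0)$ (or $(0,l)$) the operators $\bar{P}_i=\pi_V J_i/\cos(\varphi_i)$ assemble into a quaternionic structure on $V$.

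I would first treat the implication stating that a protohomogeneous $V$ must have type $(l,0)$ or $(0,l)$, arguing by contraposition. Suppose $V$ has type $(l_+,l_-)$ with $l_+,l_-\geq 1$. Then $l=l_++l_-\geq 2$, so $\dim_\R V=4l\geq 5$, and necessarily $\varphi_3\neq\pi/2$ (otherwise the type would be $(l,0)$ by the convention of Remark~\ref{rem:V+-}); in particular all three $\bar{P}_i$ are defined. Assume, for contradiction, that $V$ is protohomogeneous. Since $4l\geq 5$, Proposition~\ref{prop:H_subgroup_spn} provides a connected subgroup $S$ of the $\Sp(n)$-factor of $\Sp(1)\Sp(n)$ that normalizes $V$, acts transitively on the unit sphere $\mathbb{S}^{4l-1}$, and commutes with every $J\in\g{J}$. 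Because $S$ also commutes with $\pi_V$ (as $SV=V$), it commutes with each $\bar{P}_i$, and therefore with $\bar{P}_1\bar{P}_2\pm\bar{P}_3$. Consequently $S$ preserves the two subspaces $\Ker(\bar{P}_1\bar{P}_2-\bar{P}_3)$ and $\Ker(\bar{P}_1\bar{P}_2+\bar{P}_3)$, which by the discussion preceding the statement have dimensions $4l_+$ and $4l_-$. Both are proper and non-zero, contradicting the transitivity of $S$ on $\mathbb{S}^{4l-1}$.

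For the reverse implication I would assume the type is $(l,0)$; the case $(0,l)$ is identical after exchanging the roles of the two signs. If $\varphi_2=\pi/2$, then $\Phi(V)=(\varphi_1,\pi/2,\pi/2)$ and $V$ already belongs to the list of protohomogeneous subspaces recalled in \S\ref{subsec:known_examples}, so there is nothing to prove. Otherwise $\varphi_2\neq\pi/2$, and the complex structures $\bar{P}_1,\bar{P}_2$ together with $\bar{P}_3$ (or with $\bar{P}_1\bar{P}_2$ when $\varphi_3=\pi/2$) are pairwise anticommuting orthogonal complex structures obeying the relation characteristic of type $(l,0)$; they endow $V$ with a quaternionic structure $\bar{\g{J}}$, identifying $V$ with $\H^l$. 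The quaternionic unitary group $\Sp(l)=\{g\in\SO(V):g\bar{P}_i=\bar{P}_i g\text{ for all }i\}$ then acts transitively on $\mathbb{S}^{4l-1}$, so it suffices to realize every such $g$ by an element of $N_{\Sp(1)\Sp(n)}(V)$.

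This realization is the step I expect to be the main obstacle. Given $g\in\Sp(l)$, I would extend it to $\H V$ by $\H$-linearity, setting $\tilde{g}(Jv)=J(gv)$ for $v\in V$ and $J\in\g{J}$, and $\tilde{g}=\Id$ on $(\H V)^\perp$. The key point is the identity
\[
\langle J_a\,gv,\ J_b\,w\rangle=\langle J_a\,v,\ J_b\,g^{-1}w\rangle,\qquad v,w\in V,\ a,b\in\{0,1,2,3\},
\]
where $J_0:=\Id$, which I would verify by reducing each $\langle J_a v,J_b w\rangle$ to a multiple of some $\langle v,\bar{P}_c w\rangle$ through the relations $J_aJ_b=\pm J_c$, and then using that $g$ is orthogonal, preserves $V$, commutes with every complex structure $\bar{P}_c$, and that $\pi_V J_c w=0$ whenever $\varphi_c=\pi/2$. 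Since the vectors $J_b w$ span $\H V$, this identity shows simultaneously that $\tilde{g}$ is well defined (it carries each relation $\sum_a J_a v_a=0$ to $\sum_a J_a\,g v_a=0$) and that it is orthogonal. By construction $\tilde{g}$ commutes with $\g{J}$, hence lies in the $\Sp(n)$-factor of $\Sp(1)\Sp(n)$, and $\tilde{g}$ preserves $V$ with $\tilde{g}|_V=g$. Thus $\Sp(l)$ embeds in $N_{\Sp(1)\Sp(n)}(V)$ and acts transitively on the unit sphere of $V$, proving that $V$ is protohomogeneous. The delicate points to handle with care are the case distinctions $\varphi_2=\pi/2$ and $\varphi_3=\pi/2$ in the definition of $\bar{\g{J}}$, and the index bookkeeping in the displayed identity.
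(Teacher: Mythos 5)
Your proof is correct, but both halves take a genuinely different route from the paper's. For the forward implication, the paper also starts from Proposition~\ref{prop:H_subgroup_spn}: it picks a $T\in\Sp(1)\Sp(n)$ commuting with the $J_i$ that maps a unit vector of a factor equivalent to $V_+$ to a unit vector of a factor equivalent to $V_-$, shows $T$ must carry the first factor onto the second, and contradicts Proposition~\ref{prop:ineqk=4}. You instead observe that the transitive subgroup $S$ commutes with the operators $\bar{P}_1\bar{P}_2\mp\bar{P}_3$ and therefore preserves their kernels, which are proper and non-zero of dimensions $4l_+$ and $4l_-$; this contradicts transitivity directly, bypassing Proposition~\ref{prop:ineqk=4} and using only the kernel-dimension computation established before the statement. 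For the converse, the paper argues factor-by-factor: given arbitrary unit vectors $v_1,w_1\in V$, it builds adapted decompositions $V=\bigoplus_r V_{v_r}=\bigoplus_r V_{w_r}$ into $4$-dimensional pieces all congruent to $V_+$ (or all to $V_-$), and assembles a congruence in $\Sp(n)$ sending $v_1$ to $w_1$ out of congruences of the individual factors, using the explicit bases of Proposition~\ref{prop:class_dim4}. You instead exhibit the entire commutant $\Sp(l)$ of the induced quaternionic structure $\{\bar{P}_i\}$ on $V$ inside the normalizer, by extending each $g\in\Sp(l)$ to $\H V$ via $J_av\mapsto J_a(gv)$ and by the identity on the orthogonal complement of $\H V$. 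Your intertwining identity does hold: writing $J_a^TJ_b=\pm J_c$, one has $\langle J_av,J_bw\rangle=\pm\cos(\varphi_c)\langle v,\bar{P}_cw\rangle$ for $v,w\in V$ (with $\pi_VJ_c\vert_V=0$ when $\varphi_c=\pi/2$), and then orthogonality of $g$ and $g\bar{P}_c=\bar{P}_cg$ give the claim; since the $J_bw$ span $\H V$, well-definedness and orthogonality of $\tilde{g}$ follow simultaneously, exactly as you say. The paper's method is more hands-on but needs only finitely many explicitly constructed congruences; yours is more structural and yields the stronger conclusion that all of $\Sp(l)$ embeds in $N_{\Sp(1)\Sp(n)}(V)$.

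One point you should make explicit in the converse: the case $\varphi_1=0$ is allowed in type $(l,0)$, and there the global relation $\bar{P}_1\bar{P}_2=\varepsilon\bar{P}_3$ cannot be quoted from the paper's type discussion, since that discussion rests on Remark~\ref{rem:P1P2}, which assumes $\varphi_1>0$. Either dispose of this case as the paper does (by Proposition~\ref{prop:angle0} and Lemma~\ref{lemma:factorization} one gets $\Phi(V)=(0,\varphi,\varphi)$, and such subspaces are protohomogeneous by the classification recalled in \S\ref{subsec:known_examples}), or note that each factor with angles $(0,\varphi,\varphi)$ satisfies $\bar{P}_1\bar{P}_2=\pm\bar{P}_3$ by the Clifford-module argument, that the sign is a congruence invariant, and that all such factors are mutually congruent, so a uniform sign holds on $V$, which is all your construction requires.
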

\begin{proof}
	Assume that $V$ is protohomogenous. By conditions (C1-5), $V=\bigoplus_{r=1}^l V_r$, where each factor $V_r$ is equivalent either to $V_+$ or to $V_-$. 
	If $\varphi_3=\pi/2$, by Remark~\ref{rem:V+-} each factor $V_r$ is equivalent to $V_+$, whence $V$ has type $(l,0)$.
	
	Let us suppose that $V$ has type $(l^+,l^-)$ where $l^+,l^-\ge 1$. In this case, $k=4l\geq 8$. Let $r,s\in\{1,\dots, l\}$, $r\neq s$, be such that $V_r$ is equivalent to $V_+$, and $V_s$ is equivalent to $V_-$. Let $v_+$ and $v_-$ be unit vectors in $V_r$ and $V_s$, respectively.
	Since $V$ is protohomogeneous, there is $T\in \Sp(1)\Sp(n)$ such that $TV=V$ and $T v_+=v_-$. Now, since $k\ge 8$, by Proposition~\ref{prop:H_subgroup_spn} we can assume that $T$ is such that $TJ_i=J_i T$ for each $i\in\{1,2,3\}$. Then,
	\begin{align*}
	T P_i v_+=T \pi_V J_i v_+=\pi_V T J_i v_+=\pi_V  J_i T v_+=\pi_V  J_i v_-=P_iv_-,
	\end{align*}
	for each $i\in\{1,2,3\}$. Then, $T$  sends the subspace $V_r=\spann\{v_+,P_1v_+,P_2v_+,P_3v_+\}$ onto $V_s=\spann\{v_-,P_1v_-,P_2v_-,P_3v_-\}$. This yields a contradiction with Proposition \ref{prop:ineqk=4}.

	Now we will prove the converse.	Let $V$ be of type $(l,0)$ or $(0,l)$.
	We can assume $\varphi_1>0$. Otherwise, by Proposition~\ref{prop:angle0} we have $\Phi(V)=(0,\varphi,\varphi)$ with $\varphi\in[0,\pi/2]$, and then $V$ is protohomogeneous (see \S \ref{subsec:known_examples}). We can also assume   $\varphi_2<\pi/2$. Otherwise, $\Phi(V)=(\varphi,\pi/2,\pi/2)$ for $\varphi\in[0,\pi/2]$, and then $V$ would again be protohomogeneous (see~\S \ref{subsec:known_examples}).
	
	As usual, consider the transformations $\bar{P}_i=\pi_V  J_i/\cos(\varphi_i)$ for each $i\in\{1,2,3\}$ with $\varphi_i\neq \pi/2$, and define $\bar{P}_3:=\bar{P}_1  \bar{P}_2$  if $\varphi_3=\pi/2$. By the characterization of type, we have $\bar{P}_1  \bar{P}_2=\varepsilon \bar{P}_3$, where $\varepsilon=1$ if the type of $V$ is $(l,0)$, and $\varepsilon=-1$ if the type of $V$ is $(0,l)$. Thus, taking into account condition (C5) and the discussion in~\S\ref{subsec:factorization} (or alternatively by the very definition of $V$), we deduce that $\{\bar{P}_1,\bar{P}_2,\varepsilon\bar{P_3}\}$ is a canonical basis of a quaternionic structure on~$V$.	

	Let $v_1,w_1\in V$ be arbitrary unit vectors. Then, $\{ \bar{P}_i v_1\}_{i=0}^3$ and  $\{ \bar{P}_i w_1\}_{i=0}^3$, where $\bar{P}_0=\Id$, are $\R$-orthonormal bases for some $4$-dimensional subspaces $V_{v_1}$ and $V_{w_1}$ of $V$, respectively. By construction, $V_{v_1}$ and $V_{w_1}$ are $\bar{P}_i$-invariant for each $i\in\{1,2,3\}$, and then
	$\{\bar{P}_1,\bar{P}_2,\varepsilon\bar{P_3}\}$ is a canonical basis of a quaternionic structure when restricted to $V_{v_1}$ and to $V_{w_1}$. Moreover, every non-zero vector in $V_{v_1}$ or $V_{w_1}$ has K\"ahler angle $\varphi_i$ with respect to $J_i$, for each $i\in \{1,2,3\}$, by Proposition~\ref{prop:class_dim4}. In conclusion, $V_{v_1}$ and $V_{w_1}$ are both equivalent either to $V_{+}$ or to $V_{-}$, depending on whether the type of $V$ is $(l,0)$ or $(0,l)$, respectively.
	
Proceeding inductively we can choose unit vectors $v_2,w_2,\ldots, v_l, w_l$ and define decompositions $V=\bigoplus_{r=1}^l V_{v_r}$ and $V=\bigoplus_{r=1}^lV_{w_r}$ satisfying (C1-5), and such that $V_{v_r}=\spann\{\bar{P}_i v_r\}_{i=0}^3$ and $V_{w_r}=\spann\{\bar{P}_i w_r\}_{i=0}^3$ for each $r\in\{1,\ldots, l\}$. Furthermore, all these subspaces $V_{v_r}$ and $V_{w_r}$ are equivalent to either $V_{+}$ or to $V_{-}$, depending on the type of $V$. Thus, for each $r\in\{1,\ldots, l\}$ there exist $T_r\in \Sp(n)\subset \Sp(1)\Sp(n)$ such that $T_rV_{v_r}=V_{w_r}$, $T_r(\H V_{v_r})=\H V_{w_r}$, $T_r\vert_{\H^n\ominus\H V_{v_r}}=\Id$, and $T_rJ_i=J_iT_r$ for each $i\in\{1,2,3\}$. Now let $e_{0}=v_1$, $e_{1}$, $e_{2}$, $e_{3}$ be the unit vectors given in Proposition~\ref{prop:class_dim4} for the subspace $V_{v_1}$, and similarly $f_{0}=w_1$, $f_{1}$, $f_{2}$, $f_{3}$ the unit vectors associated with the subspace $V_{w_1}$. Since $\langle e_i, e_j\rangle=\langle f_i,f_j\rangle$ for all $i,j\in\{0,1,2,3\}$, and both sets of vectors span a totally real subspace of $\H^n$, there exists $T_1'\in \Sp(n)\subset\Sp(1)\Sp(n)$ satisfying, in addition to the properties of the previously constructed $T_1$, the relations $T_1'e_j=f_j$ for each $j\in\{0,1,2,3\}$ (in particular $T_1'v_1=w_1$), and $T_1'J_i=J_iT_1'$ for each $i\in\{1,2,3\}$. Therefore, the composition $T=T_1' T_2\dots T_r\in \Sp(n)$ satisfies $TV=V$ and $Tv_1=w_1$, which shows that $V$ is protohomogeneous.
\end{proof}

\begin{remark}
	We observe that an $\H$-orthogonal direct sum of real subspaces of dimension~4 with the same constant quaternionic K\"{a}hler angle (i.e.\ any subspace $V$ satisfying (C1-4)) is protohomogeneous if and only if any two factors are congruent under an element of $\Sp(n)$. The direct implication follows from a combination of the simultaneous diagonalization result in Corollary~\ref{prop:in_spn} (which implies condition (C5)), Lemma~\ref{lemma:factorization} (which guarantees that $V$ has constant quaternionic K\"ahler angle) and Proposition~\ref{prop:protosum} (whose proof implies that any two factors are congruent under an element of $\Sp(n)$). The converse follows from a direct calculation using the fact that condition (C5) is satisfied if any two factors are congruent under an element of $\Sp(n)$, as the elements of $\Sp(n)$ commute with any $J\in\g{J}$.
\end{remark}

\begin{remark}
	Propositions~\ref{prop:class_dim4} and~\ref{prop:protosum} imply in particular that any $4$-dimensional real subspace of $\H^n$ with constant quaternionic K\"ahler angle is protohomogeneous. Recall that, by Propositions~\ref{prop:sk=3} and~\ref{proto3}, the same happens with any $3$-dimensional real subspace of $\H^n$ with constant quaternionic K\"ahler angle. This, along with Proposition~\ref{prop:hairyball} and the well-known protohomogeneity of subspaces $V$ with $\Phi(V)=(\varphi,\pi/2,\pi/2)$, $\varphi\in[0,\pi/2]$, implies that any real subspace of $\H^n$ with constant quaternionic K\"ahler angle and dimension $k\neq 4l$, for all $l\in\mathbb{N}$, $l\geq 2$, is protohomogeneous.
\end{remark}

An immediate consequence of Proposition~\ref{prop:protosum}, along with Lemma~\ref{lemma:factorization}, is the existence of non-protohomogeneous subspaces with constant quaternionic K\"ahler angle.

\begin{corollary}\label{cor:nonproto}
	A real subspace $V$ of $\H^n$, satisfying conditions \emph{(C1-5)} above in this section and of type $(l_+,l_-)$ with $l_+,l_-\geq 1$, has constant quaternionic K\"ahler angle but is not protohomogeneous.
\end{corollary}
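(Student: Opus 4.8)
The plan is to observe that both assertions of the corollary are immediate consequences of results already established in this section, so that no new computation is required; the statement merely records, for the case $l_+,l_-\geq 1$, the conjunction of the two halves of the preceding analysis.

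First I would deduce that $V$ has constant quaternionic K\"ahler angle. A subspace satisfying conditions (C1-5) is exactly an $\H$-orthogonal direct sum $V=\bigoplus_{r=1}^l V_r$ of $4$-dimensional subspaces, each with $\Phi(V_r)=(\varphi_1,\varphi_2,\varphi_3)$, all sharing a common canonical basis $\{J_1,J_2,J_3\}$ of $\g{J}$ with respect to which every non-zero vector of each $V_r$ has K\"ahler angle $\varphi_i$ with respect to $J_i$. These are precisely the hypotheses of the converse part of Lemma~\ref{lemma:factorization}, whose conclusion is $\Phi(V)=(\varphi_1,\varphi_2,\varphi_3)$. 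Thus the first assertion follows at once.

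Then I would apply Proposition~\ref{prop:protosum}, which characterizes protohomogeneity for subspaces satisfying (C1-5): such a $V$ is protohomogeneous if and only if its type is $(l,0)$ or $(0,l)$. By hypothesis the type of $V$ is $(l_+,l_-)$ with $l_++l_-=l$ and $l_+,l_-\geq 1$, so it is neither $(l,0)$ nor $(0,l)$. Hence $V$ fails the criterion of Proposition~\ref{prop:protosum} and is therefore not protohomogeneous.

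Since every step is a direct citation of an already-proved statement, there is no genuine obstacle here; the substantive content lies in Lemma~\ref{lemma:factorization} and Proposition~\ref{prop:protosum} (the latter in turn resting on the non-equivalence of $V_+$ and $V_-$ from Proposition~\ref{prop:ineqk=4}). The corollary simply exhibits, for each admissible triple $(\varphi_1,\varphi_2,\varphi_3)$ realizing both a $V_+$ and a $V_-$, an explicit family of subspaces of constant quaternionic K\"ahler angle that are not protohomogeneous, which is exactly the input required for the construction of the inhomogeneous isoparametric hypersurfaces of Theorem~C.
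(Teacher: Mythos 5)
Your proposal is correct and matches the paper's own argument exactly: the paper likewise presents this corollary as an immediate consequence of the converse part of Lemma~\ref{lemma:factorization} (giving constant quaternionic K\"ahler angle) together with Proposition~\ref{prop:protosum} (whose type criterion fails when $l_+,l_-\geq 1$, since the type of $V$ is well defined). Nothing is missing.
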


Apart from the purely linear algebraic relevance of the examples described in Corollary~\ref{cor:nonproto}, our interest in them stems from the theory of isoparametric hypersurfaces in symmetric spaces of non-compact type, which we briefly describe now in the particular case of the quaternionic hyperbolic space $\H H^{n+1}$; we refer to~\cite{damekricci,DiDoSa19} for more details.

Following the notation in \S\ref{subsec:symmetric_spaces}, let $M=\H H^{n+1}=G/K$, where $G=\Sp(1,n+1)$, and $K=\Sp(1)\times\Sp(n+1)$ is the isotropy group at some base point $o\in \H H^{n+1}$. Let $AN$ be the solvable part of the Iwasawa decomposition of $G=\Sp(1,n)$, and $\g{a}\oplus\g{n}=\g{a}\oplus\g{g}_\alpha\oplus\g{g}_{2\alpha}$ its Lie algebra, where $\g{g}_\alpha\cong\mathbb{H}^n$.

Given any non-zero real subspace $V$ of $\g{g}_\alpha\cong\H^n$, we define $S_{V}$ as the connected subgroup of $AN$ with Lie algebra
\[
\mathfrak{s}_V=\mathfrak{a} \oplus (\g{g}_\alpha\ominus V)\oplus \mathfrak{g}_{2\alpha}.
\]
Then, by \cite[Theorem 4.5]{damekricci}, the orbit of $S_V$ through the base point $o$, together with the distance tubes around it, constitute an isoparametric family of hypersurfaces on~$\H H^{n+1}$:
\begin{theorem}\label{th:particularcasedamekricci}
	The tubes of any radius around the submanifold $S_{V}\cdot o$ are isoparametric hypersurfaces of $\H H^{n+1}$. Moreover, they have constant principal curvatures if and only if $V$ has constant quaternionic K\"{a}hler angle in $\H^n$.
\end{theorem}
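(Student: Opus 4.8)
The plan is to realize $W=S_V\cdot o$ as a homogeneous submanifold of the solvable Lie group $AN$ (equipped with its left-invariant Damek--Ricci metric) and to analyze the distance tubes around it by integrating the Riccati equation for the shape operator along the normal geodesics; this is the method underlying \cite[Theorem~4.5]{damekricci}. First I would fix the tangent and normal data at the base point: since $AN$ acts simply transitively on $\H H^{n+1}$ and $S_V\subset AN$, the orbit $W$ is a submanifold with $T_oW\cong\g{s}_V=\g{a}\oplus(\g{g}_\alpha\ominus V)\oplus\g{g}_{2\alpha}$ and normal space $\nu_oW\cong V$; because $S_V$ acts transitively on $W$ by isometries, it suffices to perform every computation at $o$ and transport it along $W$. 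Using the standard formula for the Levi-Civita connection of a left-invariant metric together with the bracket relations of the restricted root space decomposition (notably $[\g{g}_\alpha,\g{g}_\alpha]\subset\g{g}_{2\alpha}$, encoded through the complex structures $J_i$, and $\mathrm{ad}_H$ acting as a multiple of the identity on each root space), I would compute the shape operator $\mathcal{S}^W_\xi$ of $W$ for a unit normal $\xi\in V$.

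The core of the argument is the evolution of the shape operator of the tube $W_r=\{\exp_o(r\xi):\xi\in\nu^1W\}$ along a radial geodesic $\gamma_\xi(r)=\exp_o(r\xi)$, governed by the Riccati equation $\mathcal{S}'=\mathcal{S}^2+R_{\gamma_\xi'}$ with initial condition determined by $\mathcal{S}^W_\xi$. Here the Jacobi operator $R_\xi=R(\cdot,\xi)\xi$ of $\H H^{n+1}$ is explicit: normalizing the curvature to lie in $[-4,-1]$, it has eigenvalue $-4$ on $\g{J}\xi=(\mathrm{Im}\,\H)\xi$ and eigenvalue $-1$ on the remaining part of $\xi^\perp$. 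The decisive point is that the ambient tangent space decomposes, compatibly with parallel transport along $\gamma_\xi$, into blocks adapted to the canonical basis $\{J_1,J_2,J_3\}$ of Lemma~\ref{lemma:qKa} associated with $\xi$: each $J_i\xi$ splits into its $V$-component $P_i\xi$, of length $\cos(\varphi_i)$, and its $(\g{g}_\alpha\ominus V)$-component, of length $\sin(\varphi_i)$. On these blocks the Riccati equation decouples and integrates in closed form, yielding principal curvatures of the tube that are either universal functions of $r$ (of the type $\coth r$, $\tanh r$, $2\coth 2r$, $2\tanh 2r$) or functions of $r$ together with the Kähler angles $\varphi_i$ of $\xi$.

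With the principal curvatures in hand, both assertions follow; equivalently, one is checking that the distance function to $W$ is an isoparametric function. For isoparametricity I would sum the principal curvatures and show that the angle-dependent ones, which arise from the low-dimensional blocks coupling the $V$- and $(\g{g}_\alpha\ominus V)$-parts of each $J_i\xi$, pair up into expressions whose sum is independent of $\varphi_i$. Consequently the mean curvature $\tr\mathcal{S}(r)$ of each tube depends only on $r$ and on the fixed dimensions of the blocks, so every $W_r$ has constant mean curvature and the family is isoparametric for any non-zero $V$. For the second assertion, the remaining principal curvatures are genuine functions of the eigenvalues $\cos^2(\varphi_i)$ of the Kähler angle map $\Omega(\xi)$ introduced in~\eqref{eq:kahler_angle_map}; these are constant over the tube precisely when the spectrum of $\Omega(\xi)$ is independent of the unit vector $\xi\in V$, that is, exactly when $V$ has constant quaternionic Kähler angle.

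I expect the main obstacle to be the integration step together with the trace bookkeeping in the isoparametric part: one must set up the Riccati equation on the coupled $V$/$(\g{g}_\alpha\ominus V)$ blocks correctly, solve it explicitly, and then verify the non-obvious algebraic identity that makes the sum of the angle-dependent principal curvatures cancel the $\xi$-dependence of $\tr\mathcal{S}(r)$ while leaving those curvatures individually $\xi$-dependent. This single computation is what simultaneously forces constant mean curvature in general and pins down the equivalence between constant principal curvatures and constant quaternionic Kähler angle.
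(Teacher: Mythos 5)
Your proposal is correct and is essentially the paper's own argument: the paper gives no independent proof of this theorem, deducing it verbatim as the special case of \cite[Theorem~4.5]{damekricci} for the Damek--Ricci space $\H H^{n+1}$, and that theorem is proved there by exactly the Jacobi-field/Riccati tube computation you outline, with the mean curvature of each tube turning out to be independent of the unit normal $\xi\in V$ (so isoparametricity holds for arbitrary $V$), while the individual principal curvatures depend precisely on the spectrum of the K\"ahler angle map of $\xi$. The one refinement worth recording is that the invariant blocks on which the Riccati equation decouples are not just the pairs $\{P_i\xi,\,J_i\xi-P_i\xi\}$ but the three-dimensional spaces $\spann\{P_i\xi,\,J_i\xi-P_i\xi,\,Z_i\}$, where $Z_i$ is the element of $\g{g}_{2\alpha}$ corresponding to $J_i$, because the shape operator of $S_V\cdot o$ couples $\g{g}_\alpha\ominus V$ with $\g{g}_{2\alpha}$; on each such block the determinant of the matrix Jacobi solution is independent of $\varphi_i$, which is the algebraic identity behind the trace cancellation you anticipate.
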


As a consequence we get Theorem~C.

\begin{proof}[Proof of Theorem~C]
The combination of Theorem~\ref{th:bt:classification}, Corollary~\ref{cor:nonproto} and Theorem~\ref{th:particularcasedamekricci} guarantees the existence of inhomogeneous isoparametric hypersurfaces with constant principal curvatures in quaternionic hyperbolic spaces.

We note that this construction does not provide any such example in $\H H^{n+1}$ with $n\leq 6$, but it does so for $n\geq 7$.
This follows from Corollary~\ref{cor:nonproto} along with the fact that, by Remark~\ref{rem:smaller_n_k4}, the lowest integer $n$ such that $\H^n$ admits a real subspace $V$ of type $(l_+,l_-)$ with $l_+,l_-\geq 1$ is $n=7$. Indeed, we can take $V=V_+\oplus V_-\subset\H^n$ satisfying conditions (C1-5), for any triple $(\varphi_1,\varphi_2,\varphi_3)$, $\varphi_3\neq\pi/2$, such that $\cos(\varphi_1)+\cos(\varphi_2)+\cos(\varphi_3)=1$ if $n=7$, or such that $\cos(\varphi_1)+\cos(\varphi_2)+\cos(\varphi_3)\leq 1$ if $n\geq 8$.
\end{proof}

\section{Proofs of Theorems~A and~B}\label{sec:proof}

In this section we conclude the proof of the classification of protohomogeneous subspaces of any dimension $k>0$ in $\H^n$ by providing their moduli space.

\begin{proof}[Proof of Theorem~A]
We recall from the statement of Theorem~A the definition of the sets  $\Lambda=\{(\varphi_1,\varphi_2, \varphi_3)\in [0,\pi/2]^3: \varphi_1\leq \varphi_2\leq \varphi_3 \}$, and
\allowdisplaybreaks
\begin{align*}
\mathfrak{R}_3^+&{}=\{ (\varphi,\varphi,\pi/2)\in \Lambda :\varphi\in[0,\pi/2]  \},\\
\mathfrak{R}_3^-&{}=\{ (\varphi,\varphi,\pi/2)\in \Lambda :\varphi\in[\pi/3,\pi/2)   \},\\
\mathfrak{R}^+_4&{}=\{  (\varphi_1,\varphi_2,\varphi_3)\in \Lambda :\cos(\varphi_1)+\cos(\varphi_2)-\cos(\varphi_3)\leq 1 \},
\\
\mathfrak{R}^-_4&{}=\{  (\varphi_1,\varphi_2,\varphi_3)\in \Lambda :\cos(\varphi_1)+\cos(\varphi_2)+\cos(\varphi_3)\leq 1,\,\varphi_3\neq \pi/2 \},
\\
\mathfrak{S}&{}=\{(\varphi_1,\varphi_2,\varphi_3)\in \Lambda : \cos(\varphi_1)+\cos(\varphi_2)+\varepsilon\cos(\varphi_3)=1,\text{ for }\varepsilon=1\text{ or }\varepsilon=-1 \}.
\end{align*}

Note that $\g{R}^\pm_3$ (resp.\ $\g{R}^\pm_4$) is the set of possible triples that arise as quaternionic K\"ahler angles of the $3$-dimensional (resp.\ $4$-dimensional) subspaces $V_\pm^\varphi$ (resp.\ $V_\pm$) introduced in Remark~\ref{rem:k3smaller_n} (resp.\ Remark~\ref{rem:V+-}). Notice that $\mathfrak{R}^-_{3} \subset  \mathfrak{R}^+_{3}$, $\mathfrak{R}^-_{4} \subset  \mathfrak{R}^+_{4}$, $\g{S}\subset \g{R}^+_4\cup\g{R}^-_4$, and $\g{R}^-_4$ is precisely the set of triples for which there exist non-protohomogeneous subspaces with constant quaternionic K\"ahler angle.

Let $V$ be a non-zero protohomogeneous subspace of real dimension $k$ in $\H^n$.
The proof of Theorem~A follows from the discussion of the following four cases:
\begin{enumerate}[{\rm (1)}]
	\item \emph{Case $k\equiv 0 \,\mathrm{(mod\; 4)}$}. By Corollary~\ref{prop:in_spn} and Lemma~\ref{lemma:factorization}, $V$ satisfies conditions {(C1-5)} in Section~\ref{inhomo}, and by Proposition~\ref{prop:protosum}, $V$ is of type  $(k/4,0)$ or $(0,k/4)$. 
	Put $V=\bigoplus_{r=1}^{k/4} V_r$ as in (C1). Now, by Remark~\ref{rem:smaller_n_k4} we have  $\dim_{\H}(\H V_r)\in\{1,2,3,4\}$, depending on the value of the triple $\Phi(V)$. Thus, combining this with the fact that $\H V_r \perp \H V_s=0$ for  $r\neq s$, we can distiguish four subcases of relative sizes of $n$ and $k$, and determine the possible triples $\Phi(V)$ for each subcase:
	\begin{enumerate}[{\rm ({1}a)}]
		
		\item If $k>2n$, then $\Phi(V)=(0,0,0)$.
		
		\item If $4n/3< k\leq 2n$, then $\Phi(V)\in\{(0,\varphi,\varphi)\in \Lambda : \varphi\in[0,\pi/2]\}$.
		
		\item If $n< k\leq 4n/3$, then $\Phi(V)\in \g{S}$.
		
		\item Let us assume that $k\leq n$. If $V$ is of type $(k/4,0)$, then $\Phi(V)\in \mathfrak{R}^+_{4}$, whereas if $V$ is of type $(0,k/4)$, then $\Phi(V)\in \mathfrak{R}^-_{4}$. Observe that for each triple $(\varphi_1,\varphi_2,\varphi_3)$ in $\mathfrak{R}^-_{4}$ (resp.\ in $\g{R}^+_4\setminus\g{R}^-_4$) we have exactly two (resp.\ one) inequivalent protohomogeneous subspaces $V$ of dimension $k$ with $\Phi(V)=(\varphi_1,\varphi_2,\varphi_3)$.
	\end{enumerate}
	\item \emph{Case $k$ odd, $k\neq 3$}. By Proposition \ref{prop:hairyball} we have $\Phi(V)=(\pi/2,\pi/2,\pi/2)$. Hence, by the classification of totally real subspaces, we must have $k\leq n$.
	
	\item \emph{Case $k\equiv 2 \,\mathrm{(mod\; 4)}$}. By Proposition \ref{prop:hairyball} we have $\Phi(V)=(\varphi,\pi/2,\pi/2)$, for some $\varphi\in[0,\pi/2]$. These examples are classified (see \S \ref{subsec:known_examples}). Thus, we must have $k\leq 2n$. Furthermore, when $n< k \leq 2n$, we have $\Phi(V)=(0,\pi/2,\pi/2)$, whereas when $k \leq n$ we  have $\Phi(V)=(\varphi,\pi/2,\pi/2)$, for some $\varphi\in[0,\pi/2]$.
	
	\item \emph{Case $k=3$}. By Proposition~\ref{prop:hairyball}, $\Phi(V)=(\varphi,\varphi,\pi/2)$ for some $\varphi\in[0,\pi/2]$. If $n\ge 3$, Propositions \ref{prop:sk=3} and \ref{prop:ineqk=3} guarantee that, for each triple $(\varphi,\varphi,\pi/2)$ in $\mathfrak{R}^-_{3}$ (resp.\ in $\mathfrak{R}^+_{3}\setminus\g{R}^-_3$) we have exactly two (resp.\ one) inequivalent subspaces with  $\Phi(V)=(\varphi,\varphi,\pi/2)$. By Remark \ref{rem:k3smaller_n}, if $n=2$,  we have $\Phi(V)\in\{(0,0,\pi/2),(\pi/3,\pi/3,\pi/2)\}$, whereas if $n=1$, $\Phi(V)=(0,0,\pi/2)$.\qedhere
\end{enumerate}
\end{proof}

\begin{proof}[Proof of Theorem~B]
This follows from combining Theorem~A with the theory of cohomogeneity one actions on symmetric spaces of non-compact type and rank one (cf.~\S\ref{sec:cohomo}).
We just have to note that the action producing the solvable foliation (resp.\ the action with a totally geodesic singular orbit $\H H^\ell$, $\ell\in\{1,\dots,n\}$) can be recovered by the method that yields the actions with a non-totally geodesic singular orbit by taking $V$ as a $1$-dimensional subspace of $\g{g}_\alpha\cong\H^n$ (resp.\ by taking $V$ as a quaternionic subspace $\H^{n-\ell+1}$ in $\g{g}_\alpha\cong\H^n$).
\end{proof}



\begin{thebibliography}{99}
	\bibitem{ABS}
M.~F.~Atiyah, R.~Bott, A.~Shapiro, Clifford modules, 
\textit{Topology} \textbf{3} (1964) suppl.~1, 3--38.
	
	\bibitem{BB01}
J.~Berndt, M.~Br\"{u}ck, Cohomogeneity one actions on hyperbolic spaces, 
\textit{J.\ Reine Angew.\ Math.}\ \textbf{541} (2001), 209--235.


    \bibitem{BeDo15}
J.~Berndt, M.~Dom\'{\i}nguez-V\'{a}zquez, Cohomogeneity one actions on some noncompact symmetric spaces of rank two, \textit{Transform.\ Groups} \textbf{20} (2015), no.~4, 921--938.
	
	\bibitem{BT03}
J.~Berndt, H.~Tamaru, Homogeneous codimension one foliations on non-compact symmetric spaces, \textit{J.\ Differential Geom.}\ \textbf{63} (2003), no.~1, 1--40.
	
	\bibitem{BT07} 
J.~Berndt, H.~Tamaru, Cohomogeneity one actions on non-compact symmetric spaces of rank one, \textit{Trans.\ Amer.\ Math.\ Soc.}\ \textbf{359} (2007), no. 7, 3425--3438.

    \bibitem{BT13}
J. Berndt, H. Tamaru, Cohomogeneity one actions on symmetric spaces of noncompact type, \textit{J.\ Reine Angew.\ Math.}\ \textbf{683} (2013), 129--159.
	
	\bibitem{BTV95} 
J.~Berndt, F.~Tricerri, L.~Vanhecke,
\textit{Generalized Heisenberg  groups and Damek-Ricci harmonic spaces}, 
Lecture Notes in Mathematics \textbf{1598}, Springer-Verlag, Berlin, 1995.
	
	\bibitem{Be87} 
A.~L.~Besse, \textit{Einstein manifolds}, Reprint of the 1987 edition, Classics in Mathematics, Springer-Verlag, Berlin, 2008.
	
	\bibitem{Bo50} 
A.~Borel, Le plan projectif des octaves et les sph\`eres comme espaces homog\`enes, 
\textit{C.~R.\ Acad.\ Sc.\ Paris} \textbf{230} (1950), 1378--1380.

	\bibitem{BH:jams}
R.~Bryant, R.~Harvey, Submanifolds in hyper-K\"ahler geometry, \emph{J.\ Amer.\ Math.\ Soc.}\ \textbf{2} (1989), no.~1, 1--31.
	
    \bibitem{Ca38}
\'{E}.~Cartan, Familles de surfaces isoparam\'{e}triques dans les espaces \`{a} courbure constante, \textit{Ann.\ Mat.\ Pura Appl.}\ IV. s.~\textbf{17} (1938), 177--191.

	\bibitem{CCJ07} 
T.~E.~Cecil, Q.-S.~Chi, G.~R.~Jensen, Isoparametric hypersurfaces with four principal curvatures,	 \textit{Ann.\ of Math.\ (2)} \textbf{166} (2007), no.~1, 1--76.
	
	\bibitem{Ch13} 
Q.-S.~Chi, Isoparametric hypersurfaces with four principal curvatures, III,     
\textit{J.\ Differential Geom.}\ \textbf{94} (2013), no.~3, 469--504.

	\bibitem{Ch18} 
Q.-S.~Chi, Isoparametric hypersurfaces with four principal curvatures, IV, to appear in 
\textit{J.\ Differential Geom.}\ 
	
	\bibitem{damekricci} 
J.~C.~D\'iaz-Ramos, M.~Dom\'inguez-V\'azquez, Isoparametric hypersurfaces in Damek-Ricci spaces, \textit{Adv.\ Math.}\ \textbf{239} (2013), 1--17.
	
	\bibitem{mathz} 
J.~C.~D\'iaz-Ramos, M.~Dom\'inguez-V\'azquez, A.~ Kollross, Polar actions on complex hyperbolic spaces, \textit{Math.\ Z.}\ \textbf{287} (2017), no. 3-4, 1183--1213.

    \bibitem{DiDoSa17}
J.~C.~D\'{\i}az-Ramos, M.~Dom\'{\i}nguez-V\'{a}zquez, V.~Sanmart\'{\i}n-L\'{o}pez, 
Isoparametric hypersurfaces in complex hyperbolic spaces, 
\textit{Adv.\ Math.}\ \textbf{314} (2017), 756--805.

    \bibitem{DiDoSa19}
J.~C.~D\'{\i}az-Ramos, M.~Dom\'{\i}nguez-V\'{a}zquez, V.~Sanmart\'{\i}n-L\'{o}pez, Submanifold geometry in symmetric spaces of noncompact type, \textit{S\~{a}o Paulo J.\ Math.\ Sci.}, Special Volume, 2019, 1--36.

    \bibitem{Do15}
M.~Dom\'{\i}nguez-V\'{a}zquez, Canonical extension of submanifolds and foliations in noncompact symmetric spaces, 
\textit{Int.\ Math.\ Res.\ Not.\ (IMRN)} \textbf{2015} (2015), no.~22, 12114--12125.

    \bibitem{Do16}
M.~Dom\'{\i}nguez-V\'{a}zquez, Isoparametric foliations on complex projective spaces, 
\textit{Trans.\ Amer.\ Math.\ Soc.}\ \textbf{368} (2016), no.~2, 1211--1249.

    \bibitem{DoGo18}
M.~Dom\'{\i}nguez-V\'{a}zquez, C.~Gorodski, Polar foliations on quaternionic projective spaces, 
\textit{Tohoku Math.~J.~(2)} \textbf{70} (2018), no.~3, 353--375.
	
	\bibitem{FKM81}
D.~Ferus, H.~Karcher, H.~F.~M\"{u}nzner,
Cliffordalgebren und neue isoparametrische Hyperfl\"{a}chen,
\emph{Math.\ Z.}\ \textbf{177} (1981), no. 4, 479--502.

    \bibitem{HsLa71}
W.-Y.~Hsiang, H.~B.~Lawson Jr., Minimal submanifolds of low cohomogeneity, \textit{J.\ Differential Geom.}\ \textbf{5} (1971), 1--38.
	
	\bibitem{I08} 
S.~Immervoll, On the classification of	isoparametric hypersurfaces with four distinct principal	 curvatures in spheres, \textit{Ann.\ of Math.\ (2)} \textbf{168} (2008), no.~3,	 1011--1024.

    \bibitem{Iw78}
K.~Iwata, Classification of compact transformation groups on cohomology quaternion projective
spaces with codimension one orbits, \textit{Osaka J.\ Math.}\ \textbf{15} (1978), 475--508.

    \bibitem{Iw81}
K.~Iwata, Compact transformation groups on rational cohomology Cayley projective planes,
\textit{Tohoku Math.\ J.}\ (2) \textbf{33} (1981), 429--442.

	\bibitem{klein}
S.~Klein, Totally geodesic submanifolds of the complex and the quaternionic 2-Grassmannians, \emph{Trans.\	Amer.\ Math.\ Soc.}\ \textbf{361} (2009), no.~9, 4927--4967. 

    \bibitem{Ko02}
A.~Kollross, A classification of hyperpolar and cohomogeneity one actions, \textit{Trans.\ Amer.\ Math.\ Soc.}\ \textbf{354} (2002), 571--612.

    \bibitem{LBSa94}
C.~LeBrun, S.~Salamon, Strong rigidity of positive quaternion-K\"{a}hler manifolds, 
\textit{Invent.\ Math.}\ \textbf{118} (1994), 109--132.
	
	\bibitem{Mi} 
R.~Miyaoka, Isoparametric hypersurfaces with $(g,m)=(6,2)$, \textit{Ann.\ of Math.\ (2)} \textbf{177} (2013), no.~1, 53--110.
	
	\bibitem{MS43}
D. Montgomery, H. Samelson, Transformation groups of spheres, \textit{Ann.\ of Math.}\ \textbf{44} (1943), no.~3, 454--470.

    \bibitem{OzTa75}
H.~Ozeki, M.~Takeuchi, On some types of isoparametric hypersurfaces in spheres I, 
\textit{Tohoku Math.\ J.}\ \textbf{27} (1975), 515--559.

    \bibitem{Se38}
B.~Segre, Famiglie di ipersuperficie isoparametriche negli spazi euclidei ad un qualunque numero
di dimensioni, \textit{Atti Accad.\ Naz.\ Lincei Rend.\ Cl.\ Sci.\ Fis.\ Mat.\ Natur.}\ (6) \textbf{27} (1938), 203--207.

    \bibitem{Si16}
A.~Siffert, Classification of isoparametric hypersurfaces in spheres with $(g,m)=(6,1)$, 
\textit{Proc.\ Amer.\ Math.\ Soc.}\ \textbf{144} (2016), 2217--2230.
	
	\bibitem{Steenrod}
N.~Steenrod, \emph{The topology of fibre bundles}, Princeton Mathematical Series, vol.~14, Princeton University Press, Princeton, N.~J., 1951.

    \bibitem{Ta73}
R.~Takagi, On homogeneous real hypersurfaces in a complex projective space, \textit{Osaka J.\ Math.}\ \textbf{10} (1973), 495--506.
\end{thebibliography}
\end{document}